\newcommand{\Lie}{\mathrm{Lie}}
\newcommand{\LARC}{\mathrm{LARC}}
\newcommand{\diag}{\mathrm{diag}}
\newcommand{\Tr}{\mathrm{Tr}}
\newcommand{\Id}{\mathrm{Id}}
\newcommand{\R}{\mathbb{R}}
\newcommand{\RP}{\mathbb{RP}}
\newcommand{\C}{\mathbb{C}}
\newcommand{\beq}{\begin{equation}}
\newcommand{\eeq}{\end{equation}}
\newcommand{\be}{\begin{equation}}
\newcommand{\ee}{\end{equation}}
\newcommand{\bea}{\begin{eqnarray}}
\newcommand{\eea}{\end{eqnarray}}
\newcommand{\br}{\begin{eqnarray}}
\newcommand{\er}{\end{eqnarray}}
\newcommand{\brs}{\begin{eqnarray*}}
\newcommand{\ers}{\end{eqnarray*}}
\newcommand{\ba}{\begin{array}}
\newcommand{\ea}{\end{array}}
\newcommand{\bed}{\begin{description}}
\newcommand{\eed}{\end{description}}
\newenvironment{proof}{\noindent {\em Proof. }}{\hfill $\blacksquare$ \vskip 3pt}
\newtheorem{thm}{Theorem}[section]
\newtheorem{theorem}[thm]{Theorem}
\newtheorem{lemma}[thm]{Lemma}
\newtheorem{cor}[thm]{Corollary}
\newtheorem{prop}[thm]{Proposition}
\newtheorem{remark}[thm]{Remark}
\newtheorem{deff}[thm]{Definition}
\newtheorem{proposition}[thm]{Proposition}
\begin{document}

\title{Growth rates for persistently excited linear systems }
\author{Yacine Chitour \thanks{ Laboratoire des Signaux et Syst\`emes, Sup\'elec, Gif
s/Yvette, France and Universit\'e Paris Sud, Orsay and Team GECO, INRIA
Saclay-\^Ile-de-France, \texttt{chitour@lss.supelec.fr}} \qquad Fritz Colonius
\thanks{ Institut f\"ur Mathematik, Universit\"at Augsburg, Augsburg, Germany,
\texttt{fritz.colonius@math.uni-augsburg.de}} \qquad Mario Sigalotti \thanks{
INRIA Saclay-\^Ile-de-France, Team GECO, and CMAP, UMR 7641, \'Ecole
Polytechnique, Palaiseau, France, \texttt{mario.sigalotti@inria.fr} } }
\date{}
\maketitle

\begin{abstract}
We consider a family of linear control systems $\dot{x}=Ax+\alpha Bu$ where
$\alpha$ belongs to a given class of persistently exciting signals. We seek
maximal $\alpha$-uniform stabilisation and destabilisation
by means of linear feedbacks $u=Kx$.
We extend previous results
obtained for bidimensional single-input linear control systems to the general
case as follows: if the pair $(A,B)$ verifies a certain Lie bracket generating
condition, then the maximal rate of convergence of $(A,B)$ is equal to the
maximal rate of divergence of $(-A,-B)$. We also provide more precise results
in the general single-input case, where the above result is obtained under the
sole assumption of controllability of the pair $(A,B)$.

\end{abstract}



\section{Introduction\label{Section1}}

In the present paper we address stabilization issues relative to linear
systems subject to scalar persistently exciting signals (PE-signals). Such a
linear time-dependent system is written as
\begin{equation}
\dot{x}=Ax+\alpha(t)Bu\,, \label{system}%
\end{equation}
where $x\in\mathbb{R}^{d}$, $u\in\mathbb{R}^{m}$, the matrices $A,B$ have
appropriate sizes and the function $\alpha$ is a \emph{scalar} PE-signal,
i.e., $\alpha$ takes values in $[0,1]$ and
there exist two positive constants $\mu,T$ such that, for every $t\geq0$,
\begin{equation}
\int_{t}^{t+T}\alpha(s)ds\geq\mu. \label{Tmu}%
\end{equation}
Given two positive real numbers $\mu\leq T$, we use $\mathcal{{G}}(T,\mu)$ to
denote the class of all PE signals verifying \eqref{Tmu}.

In \eqref{system}, the PE-signal $\alpha$ can be seen as an input perturbation
modelling the fact that the instants where the control $u$ acts on the system
are not exactly known. If $\alpha$ only takes the values $0$ and $1$, then
\eqref{system} actually switches between the uncontrolled system $\dot x=Ax$
and the controlled one $\dot x = Ax+Bu$. In that context, the persistence of
excitation condition \eqref{Tmu} is designed to guarantee some action on the
system. Persistent of excitation conditions have appeared both in the
identification and in the control literatures
\cite{Anderson1977Exponential,Anderson1986Stability,Andersson2002Degenerate,Brockett2000Rate,Loria2005PE,Lovera2004Global,Morgan1977Stability}%
.

Here, we are mainly concerned with the global asymptotic stabilization of
system \eqref{system} with a constant linear feedback $u=Kx$ \emph{uniformly}
with respect to all PE-signals $\alpha\in\mathcal{{G}}(T,\mu)$. The dual
problem consists in exponentially destabilizing system \eqref{system} by a
constant linear feedback. In order to quantitatively measure these
stabilization and destabilization features, we first define, for every $K$,
the exponential rate of convergence for the family of time varying-systems
$\dot{x}=(A+\alpha BK)x$ and use $\mathrm{rc}(A,B,K)$ to denote it. Similarly,
for every $K$, let $\mathrm{rd}(A,B,K)$ be the rate of divergence for the
family of time varying-systems. (For the precise definitions of $\mathrm{rc}%
(A,B,K)$ and $\mathrm{rd}(A,B,K)$ in terms of Lyapunov exponents, see
Section~\ref{def--rc}.)
The sign convention on $\mathrm{rc}(A,B,K)$ (respectively, $\mathrm{rd}%
(A,B,K)$) is such that exponential stabilizability (respectively,
destabilizability) of system \eqref{system} is equivalent to the existence of
some feedback $K$ with $\mathrm{rc}(A,B,K)>0$ (respectively, $\mathrm{rd}%
(A,B,K)>0$). If $K$ is such that $\mathrm{rc}(A,B,K)>0$ then we say that $K$
is a $(T,\mu)$-stabilizer. Let $\mathrm{RC}(A,B)$ and $\mathrm{RD}(A,B)$ be
defined as the supremum over $K$ of $\mathrm{rc}(A,B,K)$ and $\mathrm{rd}%
(A,B,K)$ respectively.

Recall that if $T=\mu$ then $\alpha\equiv1$ is the unique choice of PE-signal
and in that case the above issues correspond to the classical stabilizability
questions associated with time-invariant finite-dimensional linear control
systems $\dot x=Ax+Bu$. In particular, it follows from the pole-shifting
theorem that $\mathrm{RC}(A,B)=+\infty$ if and only if $\mathrm{RD}%
(A,B)=+\infty$, and this happens if and only if the pair $(A,B)$ is controllable.

The present paper belongs to a line of research initiated in \cite{MCSS} which
consists in generalizing the pole-shifting theorem to linear control systems
subject to persistence of excitation on the input (for a survey on recent
results on persistence of excitation, see \cite{survey2013}). The
pole-assignment part of that theorem seems difficult to transpose in the
context of persistence of excitation. Therefore, we are more interested in a
qualitative feature that we call \emph{generalized pole-shifting property},
namely whether $\mathrm{RC}(A,B)$ and $\mathrm{RD}(A,B)$ are both infinite and
to characterize such a property in terms of the data of the problem
$A,B,T,\mu$.

When $\mu<T$, the generalized pole-shifting property
is not guaranteed.
More precisely, it has been proved in \cite{ChitourSigalotti2010} that for
bidimensional single-input controllable systems of the form \eqref{system},
there exists $\rho\in(0,1)$ (independent of $A,B$) such that if $\mu/T<\rho$
then $\mathrm{RC}(A,B)$ is finite. As a consequence, one easily deduces that
for $\lambda$ large enough $\mathrm{rc}(A+\lambda\mathrm{Id},B,K)$ is negative
for every $K$, hence there does not even exist a $(T,\mu)$-stabilizer in that
situation. Let us mention that if one restricts
$\mathcal{G}(T,\mu)$ to the subclass $\mathcal{D}(T,\mu,M)$ of its elements
which are $M$-Lipschitz for a given $M>0$, then one recovers that, for every
$0<\mu<T$, system \eqref{system} can be stabilized and destabilized with
arbitrarily large exponential rates uniformly with respect to $\alpha
\in\mathcal{D}(T,\mu,M)$ (cf.~\cite{Mazanti}).

Our main goal in this paper is to relate the maximal rates of convergence and
divergence associated with the pairs $(A,B)$ and $(-A,-B)$. Recall that in the
case $T=\mu$, one trivially has that $\mathrm{RD}(A,B)=\mathrm{RC}(-A,-B)$. On
the other hand, it was proved in \cite{ChitourSigalotti2010} that
$\mathrm{RC}(A,B)=+\infty$ if and only if $\mathrm{RD}(A,B)=+\infty$ for
bidimensional single-input controllable systems of the form \eqref{system}.
The main result we obtain in this paper is Theorem \ref{5p1}. It shows that
the maximal rate of convergence for a persistently excited system coincides
with the maximal rate of divergence for the time-reversed system, provided
that 
there exists a feedback
$K$ 
such that $\mathrm{Lie}(A-(\Tr(A)/d)\Id_d, BK-(\Tr(BK)/d)\Id_d)$, the Lie algebra generated by $A-(\Tr(A)/d)\Id_d$
and $BK-(\Tr(BK)/d)\Id_d$ is equal to $\mbox{sl}(d,\R)$.  If $d\geq 3$, we slightly simplify the latter condition by merely asking that 
there exists a feedback $K$ such that $\mathrm{Lie}(A, BK)$ is equal to $\mbox{gl}(d,\R)$. To prove that result, we first
prove  that $\mathrm{RD}(A,B)=\mathrm{RC}(-A,-B)$ if there exists a feedback
$K$ such that the projection on the real projective space
$\mathbb{RP}^{d-1}$ of the bilinear system $\dot x=Ax+v BKx$, $x\in
\mathbb{R}^{d}$, $v\in\mathbb{R}$, satisfies the Lie algebraic rank condition.
We denote by $\mathrm{PLARC}(A,B)$ the set of all such $K$. 
In the single-input case, we can refine the result by showing that if $(A,B)$
is controllable then $\mathrm{PLARC}(A,B)$ is nonempty 
(and conversely if
$d\geq3$). Moreover, we show in a second step that nonemptiness of $\mathrm{PLARC}(A,B)$
is actually equivalent to nonemptiness of $\mathrm{LARC}_0(A,B)$, i.e., the set of feedbacks $K$
such that $\mathrm{Lie}(A-(\Tr(A)/d)\Id_d, BK-(\Tr(BK)/d)\Id_d)$ is equal to $\mbox{sl}(d,\R)$.  

Let us briefly describe the techniques used in the paper. In order to relate
asymptotic properties of \eqref{system} and of the corresponding time-reversed
system, one must take advantage of the linearity of the problem by analyzing
the periodic trajectories of the projected control system on $\mathbb{RP}%
^{d-1}$. Thus, one is naturally led to consider the family of continuous
linear flows on a vector bundle defined by the persistently excited systems
associated with the feedbacks $K$. Such constructions have been used for
bilinear control systems in Colonius and Kliemann \cite{ColKli} and for
switched systems by Wirth in \cite{Wirth2005}. The crucial technical step
consists of extending to the PE context the results of \cite{ColKli} asserting
that if $K\in\mathrm{PLARC}(A,B)$ then periodic trajectories of the projected
system corresponding to periodic PE-signals retain the asymptotic properties
of the original system. Finally, since our rates of convergence/divergence are
defined in terms of Lyapunov exponents, we rely on tools from dynamical
systems theory such as Morse spectrum and control sets, which are used for
proving regularity properties for the functions $(A,B,K)\mapsto\mathrm{rc}%
(A,B,K)$ and $(A,B,K)\mapsto\mathrm{rd}(A,B,K)$.

Let us mention the recent contribution to the theory of linear control systems
with general time-varying coefficients given by Anderson, Ilchmann and Wirth
\cite{AndIW13}, also based on Lyapunov exponents. Our contribution is
independent of their results, since persistently excited systems present
distinctive features.

Before providing the structure of the paper, let us open some perspectives for
future work related to the issues discussed here. First of all, it would be
interesting to relate, in the multi-input case, the nonemptiness of $\mathrm{LARC}_0(A,B)$
with algebraic properties of the pair $(A,B)$. Secondly, the understanding of the 
generalized pole-shifting property for $d\geq3$ and in the multi-input case is still a widely open problem.

The contents of this paper are as follows: Section \ref{Section2} provides the
notion of persistently excited system as well as growth rates of solutions. In
particular, the maximal rates of convergence and divergence are defined.
Furthermore, Lie algebraic conditions are recalled for bilinear control
systems in $\mathbb{R}^{d}$ and their projections onto projective space.
Section \ref{Section3} shows that the exponential growth rates can
be determined by certain periodic trajectories of the projected systems.
This is used in Section \ref{Section4} to derive continuity properties of
growth rates. In Section~\ref{s-5} the relation between maximal rates of
convergence and divergence is explored and the main result is given and
commented (Theorem~\ref{5p1}). Finally, Section \ref{Section5} gives a
detailed analysis of the single-input case.

{\bf Acknowledgements} It is a pleasure to acknowledge U.~Helmke and P.~Kokkonen 
for 
pointing out,  respectively, the papers \cite{dirr-helmke,volklein} and \cite{amayo,kissin}, which led us to 
Proposition~\ref{ptrivial}. We also thank J-P.~Gauthier and F.~Wirth for several fruitful exchanges. 

\section{Problem formulation and preliminaries\label{Section2}}

\label{def00}

In this section we introduce formally persistently excited linear systems and
recall notion and facts concerning their stability properties. In particular,
Lyapunov exponents and associated rates of convergence and divergence are
recalled. Finally, accessibility properties of related control systems are discussed.

\subsection{PE systems and $(T,\mu)$-stabilizers}

The following notion is fundamental for this paper.

\begin{deff}
[$(T,\mu)$-signal]\label{Tm-signal} Let $\mu$ and $T$ be positive constants
with $\mu\leq T$. A \emph{$(T,\mu)$-signal} is a measurable function
$\alpha:\mathbb{R}\rightarrow\lbrack0,1]$ satisfying
\begin{equation}
\int_{t}^{t+T}\alpha(s)ds\geq\mu\,\text{\ for all }t\in\mathbb{R}\,.
\label{EP}%
\end{equation}
We use $\mathcal{{G}}(T,\mu)$ to denote the set of all $(T,\mu)$-signals.

\end{deff}

Given two positive integers $d\geq 2$ and $m\geq 1$, let $M_{d,m}(\mathbb{R})$ be the set of $d\times m$
matrices with real entries and we use $M_{d}(\mathbb{R})$ to denote $M_{d,
d}(\mathbb{R})$. We write $P_{d,m}$ for $M_{d}(\mathbb{R})\times M_{d,
m}(\mathbb{R})$.

\begin{deff}
[PE system]\label{Tm-sys} Given two positive constants $\mu$ and $T$ with
$\mu\leq T$ and a
pair $(A,B)\in P_{d,m}$, we define the \emph{persistently excited system} (PE
system for short) associated with $T,\mu,A$, and $B$ as the family of
non-autonomous linear control systems
\begin{equation}
\label{system1}\dot x=Ax+\alpha B u, \ \ \ \alpha\in\mathcal{{G}}(T,\mu).
\end{equation}

\end{deff}

Given a persistently excited system \eqref{system1}, we consider the following
problem: Is it possible to stabilize \eqref{system1} \emph{uniformly} with
respect to every $(T,\mu)$-signal $\alpha$, i.e., to find a matrix $K\in
M_{m,d}(\mathbb{R})$ which makes the origin globally asymptotically stable
for
\begin{equation}
\dot{x}=(A+\alpha(t)BK)x, \label{feedback}%
\end{equation}
with $K$ depending only on $A$, $B$, $T$ and $\mu$?

Note that (\ref{feedback}) defines a linear continuous flow $\Phi$ on the
vector bundle $\mathcal{{G}}(T,\mu)\times\mathbb{R}^{d}$, since $\mathcal{{G}%
}(T,\mu)$ is a shift-invariant (i.e., $\alpha(\cdot)$ is a $(T,\mu)$-signal if
and only if the same is true for $\alpha(t_{0}+\cdot)$ for every $t_{0}%
\in\mathbb{R}$), convex and weak-$\star$ compact
subset of $L^{\infty}(\mathbb{R},\mathbb{R})$ (see \cite{ColKli} for definitions).

Referring to $
x(\cdot\,;t_{0},x_{0},A,B,K,\alpha)
$
as the solution of \eqref{feedback} passing through $x_{0}$ at time $t_{0}$,
we introduce the following definition.

\begin{deff}
[$(T,\mu)$-stabilizer]\label{stab} Let $T\geq\mu>0$. The gain $K\in
M_{m,d}(\mathbb{R})$ is said to be a \emph{$(T,\mu)$-stabilizer} for
\eqref{system1} if \eqref{feedback} is globally exponentially stable,
uniformly with respect to $\alpha\in\mathcal{{G}}(T,\mu)$, i.e., there exist
$C,\gamma>0$ such that every solution $x(\cdot\,;t_{0},x_{0},A,B,K,\alpha)$ of
\eqref{feedback} satisfies
\[
|x(t;t_{0},x_{0},A,B,K,\alpha)|\leq Ce^{-(t-t_{0})\gamma}|x_{0}|\text{\ for
every }t\geq t_{0}.
\]
\end{deff}

The definition above is clearly independent of the choice of the norm on
$\mathbb{R}^{d}$. In the following, we assume $|\cdot|$ to be a fixed norm in
$\mathbb{R}^{d}$ and we denote by $\Vert\cdot\Vert$ the induced matrix norm.

\begin{remark}
Since $\mathcal{{G}}(T,\mu)$ is shift-invariant and compact, Fenichel's
uniformity lemma (see \cite[Lemma 5.2.7]{ColKli}) allows one to restate
equivalently the above definition in the following weaker form: $K\in
M_{m,d}(\mathbb{R})$ is a \emph{$(T,\mu)$-stabilizer} for \eqref{system1} if,
for every $\alpha\in\mathcal{{G}}(T,\mu)$, every solution $x(\cdot
\,;t_{0},x_{0},A,B,K,\alpha)$ of \eqref{feedback} tends to zero as time goes
to $+\infty$.
\end{remark}

\subsection{Convergence and divergence rates and a generalized pole-shifting
property}

\label{def--rc}

Next we introduce a number of rates describing the stability properties of PE
systems. Let $(A,B)\in P_{d,m}$, $K$ belong to $M_{m,d}(\mathbb{R})$ and
$T\geq\mu>0$. For $\alpha\in\mathcal{{G}}(T,\mu)$ and $0\not =x_{0}%
\in\mathbb{R}^{d}$
let
\begin{align*}
\lambda^{+}(x_{0},A,B,K,\alpha)  &  =\limsup_{t\rightarrow+\infty}\frac{1}%
{t}\log|x(t;0,x_{0},A,B,K,\alpha)|,\\
\lambda^{-}(x_{0},A,B,K,\alpha)  &  =\liminf_{t\rightarrow+\infty}\frac{1}%
{t}\log|x(t;0,x_{0},A,B,K,\alpha)|.
\end{align*}
Set
\[
\Lambda^{+}(A,B,K,\alpha)=\sup_{x_{0}\not =0}\lambda^{+}(x_{0},A,B,K,\alpha
),\qquad\Lambda^{-}(A,B,K,\alpha)=\inf_{x_{0}\not =0}\lambda^{-}%
(x_{0},A,B,K,\alpha).
\]
The \textit{rate of convergence} and the \textit{rate of divergence}
associated with the family of systems $\dot{x}=(A+\alpha BK)x$, $\alpha
\in\mathcal{{G}}(T,\mu)$, are defined as
\begin{equation}
\mathrm{rc}(A,B,K)=\inf_{\alpha\in\mathcal{{G}}(T,\mu)}\big(-\Lambda
^{+}(A,B,K,\alpha)\big)\text{ and }\mathrm{rd}(A,B,K)=\inf_{\alpha
\in\mathcal{{G}}(T,\mu)}\Lambda^{-}(A,B,K,\alpha)), \label{td0}%
\end{equation}
respectively. In particular $\mathrm{rc}(A,B,K)>0$ if and only if $K$ is a
$(T,\mu)$-stabilizer for \eqref{system1}. Notice that, differently from
\cite{ChitourSigalotti2010}, we omit here from the arguments of $\mathrm{rc}$
and $\mathrm{rd}$ the quantities $T,\mu$ (on which they actually depend),
since we focus here
on the dependence of these objects on $A$, $B$, and $K$.

Since each signal constantly equal to $\bar{\alpha}\in\lbrack\mu/T,1]$ is in
$\mathcal{{G}}(T,\mu)$, one immediately gets the estimates
\begin{equation}
\mathrm{rc}(A,B,K)\leq\min_{\bar{\alpha}\in\lbrack\mu/T,1]}\min(-\Re
(\sigma(A+\bar{\alpha}BK))), \label{td-vp}%
\end{equation}
and
\begin{equation}
\mathrm{rd}(A,B,K)\leq\min_{\bar{\alpha}\in\lbrack\mu/T,1]}\min(\Re
(\sigma(A+\bar{\alpha}BK))), \label{td-vp1}%
\end{equation}
where $\sigma(M)$ denotes the spectrum of a matrix $M$ and $\Re(\zeta)$ the
real part of a complex number $\zeta$.

A linear change of coordinates $y=Px$, $v=Vu$ does neither affect $\Lambda
^{+}(A,B,K,\alpha)$ nor $\Lambda^{-}(A,B,K,\alpha)$. Hence
\begin{equation}
\mathrm{rc}(A,B,K)=\mathrm{rc}(PAP^{-1},PBV^{-1},VKP^{-1}), \label{chco1}%
\end{equation}
and
\begin{equation}
\mathrm{rd}(A,B,K)=\mathrm{rd}(PAP^{-1},PBV^{-1},VKP^{-1}), \label{chco}%
\end{equation}
for all invertible matrices $P\in M_{d}(\mathbb{R})$ and $V\in M_{m}%
(\mathbb{R})$.

\begin{remark}
\label{r-con} Let $P$ be a change of coordinates which brings the pair $(A,B)$
into a controllability decomposition $(A^{\prime},B^{\prime})$ of $(A,B)$,
namely,
\begin{equation}
\label{contr-dec}A^{\prime}=%
\begin{pmatrix}
A_{1} & A_{2}\\
0 & A_{3}%
\end{pmatrix}
,\quad B^{\prime}=%
\begin{pmatrix}
B_{1}\\
0
\end{pmatrix}
,
\end{equation}
with $(A_{1},B_{1})$ controllable. Then, by \eqref{chco1}, \eqref{chco}, and a
standard argument based on the variation of constant formula, one gets that,
for every $K=(K_{1}\ K_{2})\in M_{d, m}(\mathbb{R})$,
\begin{align}
\label{rc-cont}\mathrm{rc}(A,B,K)  &  =\min(\mathrm{rc}(A_{1},B_{1}%
,K_{1}),\min(-\Re(\sigma(A_{3})))),\\
\mathrm{rd}(A,B,K)  &  =\min(\mathrm{rd}(A_{1},B_{1},K_{1}),\min(\Re
(\sigma(A_{3})))). \label{rd-cont}%
\end{align}

\end{remark}

Define the \emph{maximal rate of convergence} associated with the PE system
\eqref{system1} as
\begin{equation}
\mathrm{RC}(A,B)=\sup_{K\in M_{m,d}(\mathbb{R})}\mathrm{rc}(A,B,K),
\label{tdd}%
\end{equation}
and similarly, the \emph{maximal rate of divergence} as
\begin{equation}
\mathrm{RD}(A,B)=\sup_{K\in M_{m,d}(\mathbb{R})}\mathrm{rd}(A,B,K).
\label{tDD}%
\end{equation}
Because of \eqref{chco1} and \eqref{chco}, one has
\begin{equation}
\mathrm{RC}(A,B)=\mathrm{RC}(PAP^{-1},PBV^{-1}),\qquad\mathrm{RD}%
(A,B)=\mathrm{RD}(PAP^{-1},PBV^{-1}), \label{chch}%
\end{equation}
for all invertible matrices $P\in M_{d}(\mathbb{R})$ and $V\in M_{m}%
(\mathbb{R})$.

Thanks to Remark~\ref{r-con}, one deduces for a controllability decomposition
of the pair $(A,B)$ as in \eqref{contr-dec} that
\begin{equation}
\mathrm{RC}(A,B)=\min(\mathrm{RC}(A_{1},B_{1}),\min(-\Re(\sigma(A_{3}%
)))),\quad\mathrm{RD}(A,B)=\min(\mathrm{RD}(A_{1},B_{1}),\min(\Re(\sigma
(A_{3})))). \label{RCRD-cont}%
\end{equation}
Notice also that
\begin{equation}
\mathrm{RC}(A+\lambda\mathrm{Id}_{d},B)=\mathrm{RC}(A,B)-\lambda
,\qquad\mathrm{RD}(A+\lambda\mathrm{Id}_{d},B)=\mathrm{RD}(A,B)+\lambda.
\label{rem2}%
\end{equation}

\begin{remark}
Let $(A,B)\in P_{d,m}$ for some $d,m\in\mathbb{N}$. A necessary condition for
one of the quantities $\mathrm{RC}(A,B)$ or $\mathrm{RD}(A,B)$ to be infinite
is that the pair $(A,B)$ is controllable. This immediately follows from
\eqref{RCRD-cont}.

\end{remark}

\begin{remark}
\label{2.6} Let $m=1$ and suppose that for $A$ there exists $\bar{B}$ for
which $(A,\bar{B})$ is controllable. Then $\mathrm{RC}(A,B)$ and
$\mathrm{RD}(A,B)$ do not depend on $B$, as long as $(A,B)$ is controllable.
This follows from
\eqref{chch} and the fact that the controllability form of a single-input
controllable system only depends on the matrix $A$.
\end{remark}

Given a controllable pair $(A,B)$, whether or not $\mathrm{RC}$ and
$\mathrm{RD}$ are both infinite can be understood as whether or not a
\emph{generalized pole-shifting property} holds true for the PE system
$\dot{x}=Ax+\alpha Bu$, $\alpha\in\mathcal{{G}}(T,\mu)$. One of the aims of
the paper is to investigate up to which extent the unboundedness of
$\mathrm{RC}$ and $\mathrm{RD}$ are equivalent properties. In the planar
single-input case the two properties are equivalent, as recalled below
(\cite[Proposition 4.3]{ChitourSigalotti2010}).

\begin{prop}
\label{rdc0} Let $d=2$ and $m=1$ and consider a PE system of the form $\dot
{x}=Ax+\alpha Bu$, $\alpha\in\mathcal{{G}}(T,\mu)$.
Then $\mathrm{RC}(A,B)=+\infty$ if and only if $\mathrm{RD}(A,B)=+\infty$.
\end{prop}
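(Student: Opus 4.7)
The plan is to exploit the very limited structure of planar single-input controllable pairs together with the symmetries of the PE setting.

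\emph{Reduction.} By \eqref{RCRD-cont}, if $(A,B)$ is not controllable then $\mathrm{RC}(A,B)$ and $\mathrm{RD}(A,B)$ are both bounded (by the smallest modulus of a real part of an eigenvalue of the uncontrollable block), so both quantities are finite and the equivalence is trivial. I therefore assume $(A,B)$ controllable. By \eqref{chch}, I bring $(A,B)$ to companion form
\[
A=\begin{pmatrix} 0 & 1 \\ -a_0 & -a_1 \end{pmatrix},\qquad B=\begin{pmatrix} 0 \\ 1 \end{pmatrix},
\]
and by \eqref{rem2}, replacing $A$ by $A+\lambda\mathrm{Id}_2$ shifts $\mathrm{RC}$ and $\mathrm{RD}$ by $-\lambda$ and $+\lambda$ respectively and hence preserves infiniteness of both. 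I may thus further assume $a_1=\mathrm{tr}(A)=0$.

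\emph{Duality of feedbacks.} For a feedback $K=(k_0,k_1)$ set $K^{\sharp}=(k_0,-k_1)$. The closed-loop matrices $A+\alpha BK$ and $A+\alpha BK^{\sharp}$ share the same determinant $a_0-\alpha k_0$ but have opposite traces $\pm\alpha k_1$. When $k_0$ is chosen sufficiently negative so that $a_0-\alpha k_0 > \alpha^2 k_1^2/4$ for every $\alpha\in[\mu/T,1]$, both frozen families have complex conjugate eigenvalues, and the eigenvalues of $A+\bar\alpha BK^{\sharp}$ are obtained from those of $A+\bar\alpha BK$ by flipping the sign of their real parts. At the level of constant signals (cf.~\eqref{td-vp},\eqref{td-vp1}), the map $K\mapsto K^{\sharp}$ thus exchanges upper bounds for $\mathrm{rc}$ and $\mathrm{rd}$.

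\emph{Transfer to PE signals.} Assume $\mathrm{RC}(A,B)=+\infty$ and pick a sequence $K_n=(k_0^n,k_1^n)$ with $\mathrm{rc}(A,B,K_n)\to+\infty$. Tuning $k_0^n$ as above, I may assume all frozen matrices have complex conjugate eigenvalues uniformly in $\alpha\in[\mu/T,1]$, with real parts $\to-\infty$. To conclude that $\mathrm{rd}(A,B,K_n^{\sharp})\to+\infty$, I would project the two flows onto $\mathbb{RP}^1\cong S^1/\{\pm1\}$ and parametrize the direction by an angle $\theta$. In the large-gain regime the angular equation is a fast rotation whose average rate of change dominates the fluctuations induced by the PE signal; consequently the Lyapunov exponents are accurately approximated (uniformly in $\alpha\in\mathcal{G}(T,\mu)$) by time-averages of $\dot r/r$ along this rotation. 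The sign-flip identity of the previous step transfers: the radial average for $K_n^{\sharp}$ equals the negative of that for $K_n$ up to vanishing lower-order corrections, giving $\mathrm{rd}(A,B,K_n^{\sharp})\to+\infty$. The reverse implication is symmetric.

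\emph{Main obstacle.} The serious step is the third one: making quantitative the claim that, uniformly over $\alpha\in\mathcal{G}(T,\mu)$, the Lyapunov exponents of the two closed-loop families are negatives of each other up to an error that is $o(|k_1^n|)$. This is exactly the place where a general argument would invoke the Colonius--Kliemann framework for bilinear flows on vector bundles (adapted to the PE class, which is shift-invariant and weak-$\star$ compact), but in the planar case the one-dimensionality of $\mathbb{RP}^1$ should allow a direct estimate by analyzing the period of the angular dynamics.
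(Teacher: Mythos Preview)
Your proposal is not a proof: you yourself flag the third step as the ``main obstacle,'' and indeed the averaging sketch you give there is not an argument. Let me point out both what you are missing and how the paper handles it.

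First, your duality $K\mapsto K^\sharp=(k_0,-k_1)$ is not merely a coincidence at the level of frozen eigenvalues: with $P=\mathrm{diag}(1,-1)$ one has, for the traceless companion pair,
\[
P(A+\alpha BK)P^{-1}=-(A+\alpha BK^\sharp)\qquad\text{for every }\alpha\in[0,1],
\]
since $PAP^{-1}=-A$, $PB=-B$ and $KP^{-1}=K^\sharp$. By \eqref{chco1} this gives the \emph{exact} identity $\mathrm{rc}(A,B,K)=\mathrm{rc}(-A,-B,K^\sharp)$ for every $K$ and every signal, not just constants. You never exploit this; had you done so, your second step would be finished without any restriction on $k_0$ or on the spectrum.

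Second, even with this identity in hand, the remaining step is to pass from $\mathrm{rc}(-A,-B,K^\sharp)$ to $\mathrm{rd}(A,B,K^\sharp)$. This is a genuine time-reversal statement: it relates forward Lyapunov exponents of a system to forward Lyapunov exponents of the system with opposite vector field, and such a relation does \emph{not} hold trajectory by trajectory for arbitrary $\alpha\in\mathcal G(T,\mu)$. Your proposed fix---an averaging argument on $\mathbb{RP}^1$ in a ``large-gain regime''---is exactly the hard part, and nothing in your text substantiates the claimed uniform $o(|k_1^n|)$ error. In the paper this step is handled by Corollary~\ref{cooro} (built on Propositions~\ref{29}, \ref{time-reversal}, \ref{211}), which shows $\mathrm{rc}(-A,-B,K)=\mathrm{rd}(A,B,K)$ whenever $K\in\mathrm{PLARC}(A,B)$; density of $\mathrm{PLARC}(A,b)$ for controllable single-input pairs (Theorem~\ref{corri}) and lower semicontinuity then give the conclusion at the level of $\mathrm{RC}$ and $\mathrm{RD}$.

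In the present paper, Proposition~\ref{rdc0} is quoted from \cite{ChitourSigalotti2010} and then recovered (and sharpened to $\mathrm{RC}(A)=\mathrm{RD}(A)-\tfrac{2}{d}\mathrm{Tr}(A)$) via Corollary~\ref{cor-symm}: one first proves $\mathrm{RC}(A,B)=\mathrm{RD}(-A,-B)$ through the general machinery (Theorem~\ref{5p1}), and then uses that every traceless $2\times 2$ matrix is similar to its opposite---precisely the conjugation by $P$ above---to conclude $\mathrm{RD}(-A,-B)=\mathrm{RD}(A,B)$. Your $K^\sharp$ map is the shadow of this similarity, but you try to bypass the time-reversal machinery with an ad hoc asymptotic estimate that you do not carry out.
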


\subsection{Projected dynamics on $\mathbb{RP}^{d-1}$}

For a matrix $A\in M_{d}(\mathbb{R})$, we denote by $\Pi A$ the vector field
on the real projective space $\mathbb{RP}^{d-1}$ obtained by canonical
projection of the vector field $x\mapsto Ax$ onto $T\mathbb{RP}^{d-1}$, i.e.,
for every $q=\Pi x\in\mathbb{RP}^{d-1}$ with $x\in\mathbb{R}^{d}%
\setminus\{0\}$,
\[
(\Pi A)(q)=d\Pi_{x}\left(  Ax-\frac{\langle x,Ax\rangle}{|x|^{2}}x\right)  .
\]
Notice that $\Pi A=\Pi(A+\lambda \Id)$ for every $\lambda\in\R$.

Given two matrices $A_{1}$ and $A_{2}$ in $M_{d}(\mathbb{R})$ and a set of
admissible controls $\mathcal{U}\subset L^{\infty}(\mathbb{R},[0,1])$, we
define three control systems as follows:
\begin{align}
\dot x  &  =A_{1}x+u A_{2}x, & x\in\mathbb{R}^{d},\quad u\in\mathcal{U}%
,\label{u-s}\\
\dot q  &  =(\Pi A_{1})(q)+u (\Pi A_{2})(q), & q\in\mathbb{RP}^{d-1},\quad
u\in\mathcal{U},\label{p-s}\\
\dot M  &  =A_{1}M+u A_{2}M, & M\in M_{d}(\mathbb{R}),\quad u\in\mathcal{U}.
\label{m-s}%
\end{align}

We say that $\{A_{1},A_{2}\}$ satisfies the \emph{Lie algebra rank condition}
if the Lie algebra $\mathrm{Lie}(A_{1},A_{2})$ generated by $A_{1}$ and
$A_{2}$
is equal to $M_{d}(\mathbb{R})$. Similarly, we say that $\{A_{1},A_{2}\}$
satisfies the \emph{projected Lie algebra rank condition } if $\{\Pi A_{1},\Pi
A_{2}\}$ satisfies the Lie algebra rank condition on $\mathbb{RP}^{d-1}$,
i.e., $\mathrm{Lie}_{q}(\Pi A_{1},\Pi A_{2})=T_{q}\mathbb{RP}^{d-1}$ for every
$q\in\mathbb{RP}^{d-1}$. This coincides with hypothesis (H) in
\cite{CK-article}.

Given a pair $(A,B)\in P_{d,m}$, let $\mathrm{LARC}(A,B)$ (respectively,
$\mathrm{PLARC}(A,B)$) be the set of
$K\in M_{m, d}(\mathbb{R})$ such that $\{A,B K\}$ satisfies the Lie algebra
rank condition (respectively, the projected Lie algebra rank condition).
We also find useful to introduce the set $\mathrm{LARC}_0(A,B)$ of $M_{m, d}(\mathbb{R})$ made of those feedbacks $K$ such that 
$\mathrm{Lie}(A-(\Tr(A)/d)\Id_d,BK-(\Tr(BK)/d)\Id_d)=\mbox{sl}(d,\R)$.

The proof of the following lemma is trivial.

\begin{lemma}
\label{proj} Let $A_{1},A_{2}\in M_{d}(\mathbb{R})$. Then $\Pi\lbrack
A_{1},A_{2}]=[\Pi A_{1},\Pi A_{2}]$. As a consequence, the attainable set for
\eqref{p-s} from every initial condition $q_{0}=\Pi x_{0}\in\mathbb{RP}^{d-1}%
$, is the projection on $\mathbb{RP}^{d-1}$ of the attainable set of
\eqref{u-s} from $x_{0}$ and the evaluation at $q_{0}$ of the attainable set
for \eqref{m-s} from the identity. Moreover, for every $(A,B)\in P_{d,m}$ and
$\lambda\in\mathbb{R}$,
\begin{equation}
\label{comparisons}
\mathrm{LARC}(A+\lambda\mathrm{Id},B)\subseteq
\mathrm{LARC}_0(A,B)\subseteq \mathrm{PLARC}(A,B) .
\end{equation}

\end{lemma}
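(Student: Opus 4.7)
The plan is to verify the three assertions in sequence. For the bracket identity $\Pi[A_{1},A_{2}]=[\Pi A_{1},\Pi A_{2}]$, I would note that every linear vector field $x\mapsto Ax$ on $\R^{d}\setminus\{0\}$ is homothety-equivariant and hence $\Pi$-related to $\Pi A$ (because $d\Pi_{x}$ kills the radial direction, and subtracting the radial component $\tfrac{\langle x,Ax\rangle}{|x|^{2}}x$ does not alter the image). Since $\Pi$-relatedness is preserved under Lie brackets, the linear vector field $[A_{1},A_{2}]x$ is simultaneously $\Pi$-related to $\Pi[A_{1},A_{2}]$ and to $[\Pi A_{1},\Pi A_{2}]$, forcing their equality. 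The consequence on attainable sets is then immediate: writing $x(t)=M(t)x_{0}$ via the fundamental solution of \eqref{m-s} relates \eqref{u-s} and \eqref{m-s}, while applying $\Pi$ turns trajectories of \eqref{u-s} into trajectories of \eqref{p-s}; the converse lift is available thanks to the scale-equivariance of \eqref{u-s}.

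For the chain \eqref{comparisons}, the algebraic key is that $\Id$ is central in $\mbox{gl}(d,\R)$, so every iterated bracket of length $\geq 1$ of $A+\lambda\Id$ and $BK$ coincides with the corresponding bracket of $A-\tfrac{\Tr(A)}{d}\Id$ and $BK-\tfrac{\Tr(BK)}{d}\Id$, and all such brackets are automatically traceless. Denoting by $V\subseteq\mbox{sl}(d,\R)$ the span of these iterated brackets, one has
\[
\mathrm{Lie}(A+\lambda\Id,BK)=\R(A+\lambda\Id)+\R\,BK+V.
\]
If $K\in\mathrm{LARC}(A+\lambda\Id,B)$, this sum equals $\mbox{gl}(d,\R)$. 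Projecting modulo $\R\,\Id$, using $\mbox{sl}(d,\R)$ as a complement, produces
\[
\R\Bigl(A-\tfrac{\Tr(A)}{d}\Id\Bigr)+\R\Bigl(BK-\tfrac{\Tr(BK)}{d}\Id\Bigr)+V=\mbox{sl}(d,\R),
\]
which is exactly $\mathrm{Lie}\bigl(A-\tfrac{\Tr(A)}{d}\Id,BK-\tfrac{\Tr(BK)}{d}\Id\bigr)$; this gives $K\in\mathrm{LARC}_{0}(A,B)$.

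For the second inclusion $\mathrm{LARC}_{0}(A,B)\subseteq\mathrm{PLARC}(A,B)$, I would use the first part to view $A\mapsto\Pi A$ as a Lie algebra homomorphism from $M_{d}(\R)$ into the smooth vector fields on $\mathbb{RP}^{d-1}$, with kernel $\R\,\Id$ (indeed, $\Pi A=0$ forces every nonzero $x$ to be an eigenvector of $A$, hence $A$ to be a scalar matrix). For $K\in\mathrm{LARC}_{0}(A,B)$ this yields
\[
\mathrm{Lie}(\Pi A,\Pi BK)=\Pi\bigl(\mathrm{Lie}(A-\tfrac{\Tr(A)}{d}\Id,BK-\tfrac{\Tr(BK)}{d}\Id)\bigr)=\Pi(\mbox{sl}(d,\R)),
\]
and it remains only to note that $\mbox{sl}(d,\R)$ is infinitesimally transitive on $\mathbb{RP}^{d-1}$, a consequence of the transitive $\mathrm{SL}(d,\R)$-action. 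Since the authors themselves flag the statement as trivial, I do not expect a genuine obstacle; the only bookkeeping demanding care is the bidirectional translation between traceless generators and quotients modulo $\R\,\Id$ used in the first inclusion.
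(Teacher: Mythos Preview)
Your argument is correct and supplies exactly the standard reasoning one would expect: $\Pi$-relatedness for the bracket identity, centrality of $\Id$ together with the trace projection $M\mapsto M-\tfrac{\Tr M}{d}\Id$ for the first inclusion, and infinitesimal transitivity of $\mbox{sl}(d,\R)$ on $\RP^{d-1}$ for the second. The paper itself gives no proof at all---it simply declares the lemma trivial---so there is nothing further to compare; your write-up fills in the omitted details in the natural way.
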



\begin{remark}
\label{irre} For $K\in M_{m,d}(\mathbb{R})$, define the \emph{system group}
$G_{K}$ (respectively, $G^0_K$) as the orbit through the identity for system \eqref{m-s}, with $A_1=A$ and $A_2=BK$ (respectively, $A_1=A-(\Tr(A)/d)\Id_d$ and $A_2=BK-(\Tr(BK)/d)\Id_d$). It is well
known that $G_{K}$ and $G^0_K$ are Lie subgroups of $M_{d}(\mathbb{R})$ with Lie algebras
given by $\mathrm{Lie}(A,BK)$ and $\mathrm{Lie}(A-(\Tr(A)/d)\Id_d,BK-(\Tr(BK)/d)\Id_d)$, respectively.
The actions of  $G_{K}$ and $G^0_{K}$ on $\mathbb{RP}^{d-1}$ coincide. 
Moreover, by the orbit theorem applied to the analytic
system \eqref{p-s}, 
such an action is transitive if and only if 
$K$ is in $\mathrm{PLARC}(A,B)$. {
(For details, see \cite{Jur}.)
}
In particular, if $K$ is in $\mathrm{PLARC}(A,B)$ then the Lie algebra 
$\mathrm{Lie}(A,BK)$ is irreducible, i.e., there does not exist a proper
subspace of $\mathbb{R}^{d}$
which is invariant for all the elements of $\mathrm{Lie}(A,BK)$.
\end{remark}

\section{Growth rates and periodicity\label{Section3}}

We start this section by a controllability property for the induced system on
projective space, which is useful for the subsequent discussion on growth rates.

Let us consider, for a moment, the system
\begin{equation}
\dot{x}=(A+v(t)BK)x, \label{cs_1}%
\end{equation}
where $K\in M_{m,d}(\mathbb{R})$ is a given feedback matrix and the role
previously played by the $(T,\mu)$-signal $\alpha$ is now taken by $v$, seen
as a control parameter, with values in a closed subinterval $I$ of $[0,1]$
with nonempty interior (the \emph{control range}). We assume that $v$ to
belong to $L^{\infty}(\mathbb{R},I)$, without persistent excitation
assumptions on it.

As noticed in Section \ref{def00}, the homogeneous bilinear control system
\eqref{cs_1} in $\mathbb{R}^{d}$ induces a control system in the projective
space $\mathbb{RP}^{d-1}$,
given by
\addtocounter{equation}{1} \newcounter{cs_2} \setcounter{cs_2}{\theequation}
\[
\dot{q}=(\Pi A)(q)+v(t)(\Pi BK)(q).\eqno{(\arabic{cs_2})_I}
\]
Denote by $t\mapsto q(t;q_{0},v)$ the trajectory of $(\arabic{cs_2})_{I}$ with
initial condition $q(0)=q_{0}\in\mathbb{RP}^{d-1}$ corresponding to the
control $v\in L^{\infty}(\mathbb{R},I)$.

The following controllability property motivates the role of the assumption
that the feedback matrix $K$ is in $\mathrm{PLARC}(A,B)$, which will appear
repeatedly in the following sections.

\begin{theorem}
\label{uniform-time} Consider the projected system $(\arabic{cs_2})_{I}$,
where $I\subset\lbrack0,1]$ is a closed interval with nonempty interior and
$K\in\mathrm{PLARC}(A,B)$. Then there exists a unique compact subset $C$ of
$\mathbb{RP}^{d-1}$ with nonempty interior having the following properties:

\begin{description}
\item {(i)} For all $q_{0}\in C$, $t\geq0$, and $v\in L^{\infty}%
(\mathbb{R},I)$ one has $q(t;q_{0},v)\in C$.

\item {(ii)} For every $q_{-}\in\mathrm{int}C$ there exists a time $\hat{\tau
}>0$ such that for all $q_{0}\in\mathbb{RP}^{d-1}$ there is $v_{0}\in
L^{\infty}(\mathbb{R},I)$ with%
\[
q(\tau;q_{0},v)=q_{-}\text{ for some }\tau\in\lbrack0,\hat{\tau}].
\]

\end{description}
\end{theorem}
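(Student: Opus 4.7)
The plan is to deduce the theorem from the classical theory of invariant control sets for accessible systems on compact manifolds (Colonius--Kliemann \cite{ColKli}), applied to the projected system $(\arabic{cs_2})_I$. Under $K\in\mathrm{PLARC}(A,B)$, the projected Lie algebra rank condition together with the fact that $I$ has nonempty interior gives, via Krener's theorem applied both in forward and backward time, that the reachable set $\caR_{\leq\tau}(q_0)$ and the controllable set $\caR^{-}_{\leq\tau}(q_0)$ have nonempty interior in $\mathbb{RP}^{d-1}$ for every $q_0\in\mathbb{RP}^{d-1}$ and every $\tau>0$. Compactness of $\mathbb{RP}^{d-1}$ will supply the finite-cover arguments needed throughout.

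\textbf{Construction of $C$ and uniqueness.} I would first fix $\bar v\in\mathrm{int}\, I$ and consider the autonomous projected flow of $A+\bar v BK$ on $\mathbb{RP}^{d-1}$. This flow admits a compact attractor $\Omega$ given by the projection of the eigenspaces associated with the largest real part of $\sigma(A+\bar v BK)$. I choose a compact neighborhood $W$ of $\Omega$ that is mapped strictly into its own interior by the autonomous flow at some $t_*>0$; by continuity of the projected flow in the control, the same inclusion persists for all controls close enough in $L^\infty([0,t_*],I)$ to the constant $\bar v$. Setting $C$ to be the closure of the forward orbit of $W$ under all admissible controls in $L^\infty(\mathbb{R},I)$, one obtains a compact positively invariant subset of $\mathbb{RP}^{d-1}$, with nonempty interior by the accessibility step; this gives~(i). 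Uniqueness of $C$: any compact $C'$ enjoying property~(i) must contain the attractor $\Omega$ (since forward-invariance forces $C'$ to absorb any $\bar v$-trajectory converging to $\Omega$, which includes a full neighborhood of $\Omega$ by continuity), after which applying all admissible controls from points of $\Omega\subset C'$ and taking closure yields $C\subset C'$, and the reverse inclusion follows by the same argument applied to an $\Omega$-point inside $C$.

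\textbf{Uniform time bound and main obstacle.} Fix $q_-\in\mathrm{int}\, C$. Backward accessibility provides $\tau_1>0$ and an open set $V\subset\mathbb{RP}^{d-1}$ such that every $q\in V$ can be steered to $q_-$ in time $\leq\tau_1$ by some admissible control. The core of the proof is then to show that every $q_0\in\mathbb{RP}^{d-1}$ can be steered into $V$ in time $\leq T_0$ for some $T_0$ independent of $q_0$. For this, the autonomous flow of $\bar v$ attracts every point of the open dense basin of $\Omega$ into any neighborhood of $\Omega$, uniformly on compact subsets of the basin; combining this with Krener one arranges to land in $V$. For the exceptional $q_0$ lying on the closed meagre union of stable manifolds of non-attracting equilibria, a short corrective maneuver afforded by $\mathrm{PLARC}$ (arbitrarily small time, nonempty-interior reachable sets) moves $q_0$ into the basin, after which the autonomous flow finishes the job. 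A finite open cover of $\mathbb{RP}^{d-1}$ by such steering neighborhoods yields a single uniform $T_0$, and $\hat\tau:=T_0+\tau_1$ gives (ii). The main obstacle is precisely this uniform time argument: patching the short corrective maneuvers from $\mathrm{PLARC}$ together with the long attracting trajectories of the autonomous flow into one time bound valid simultaneously for all initial conditions on $\mathbb{RP}^{d-1}$.
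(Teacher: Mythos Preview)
The paper disposes of this by direct citation: \cite[Theorem~7.3.3]{ColKli} gives the unique invariant control set $C$ on $\RP^{d-1}$ (compact, nonempty interior, contained in the closure of every attainable set), and \cite[Lemma~3.2.21]{ColKli} yields exact controllability to interior points in uniformly bounded time. You are instead rebuilding this machinery by hand via an attractor $\Omega$ for a fixed control value $\bar v$. The route is legitimate in principle, but two steps are broken as written.

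Your uniqueness argument does not close. You show that any $C'$ with nonempty interior satisfying~(i) contains a point of $\Omega$, hence $C\subset C'$; but ``the reverse inclusion follows by the same argument applied to an $\Omega$-point inside $C$'' is a non sequitur---running the same argument again still gives $C\subset C'$, never $C'\subset C$. Indeed, uniqueness under~(i) alone is false, since $\RP^{d-1}$ itself satisfies~(i); you must invoke~(ii). (Relatedly, defining $C$ as the forward orbit closure of an entire neighborhood $W$ of $\Omega$ is dangerous: unless you already know $W$ lies inside the true invariant control set, your $C$ may be strictly larger and will then fail~(ii).) In your argument for~(ii) there is a missing link between $\Omega$ and $q_-$: backward Krener gives an open $V$ near $q_-$, and you drive $q_0$ toward $\Omega$ with the $\bar v$-flow plus a corrective maneuver, but ``combining this with Krener one arranges to land in $V$'' skips precisely the step that needs work, namely passing from a neighborhood of $\Omega$ into $V$ (which sits near $q_-$, not near $\Omega$) in uniformly bounded time. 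That step is the actual content of \cite[Lemma~3.2.21]{ColKli} and does not follow from local accessibility alone.
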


\begin{proof}
The control range $I$ is compact and convex and the Lie algebra rank condition
holds, since $K\in\mathrm{PLARC}(A,B)$. Hence the projected control system in
$\mathbb{RP}^{d-1}$ satisfies the assumptions of \cite[Theorem 7.3.3]{ColKli}.
It follows that the control system $(\arabic{cs_2})_{I}$ has a unique
invariant control set $C$, which is compact, has nonempty interior, and is
contained in the closure of every attainable set of $(\arabic{cs_2})_{I}$.
Recall that an invariant control set is characterized by condition (i)
together with the property that every element of $C$ is approximately
controllable from every other element of $C$ (cf.~\cite[Definition
3.1.3]{ColKli}). The proof is completed by noticing that \cite[Lemma
3.2.21]{ColKli} implies assertion (ii) stating exact controllability to points
in the interior of $C$.
\end{proof}

We turn to growth rates for PE systems. Given $\alpha\in\mathcal{{G}}(T,\mu)$
and $0\not =x_{0}\in\mathbb{R}^{d}$, we say that $(\alpha,x_{0})$ is
$\#$-admissible for $A,B,K$ if there exists $\tau>0$ such that $\alpha$ is
$\tau$-periodic as well as
the trajectory $\Pi x(\cdot;x_{0},A,B,K,\alpha)$ in $\mathbb{RP}^{d-1}$
corresponding to $\alpha$ and starting at $q_{0}=\Pi x_{0}\in\mathbb{RP}%
^{d-1}$. Corresponding rates of convergence and divergence are defined by
replacing in the definitions of $\mathrm{rc},\mathrm{RC},\mathrm{rd}%
,\mathrm{RD}$ the class of trajectories $x(\cdot;0,x_{0},A,B,K,\alpha)$
corresponding to $(T,\mu)$-signals by the subclass corresponding to pairs
$(\alpha,x_{0})$ that are $\#$-admissible for $A,B,K$. More precisely, let%
\[
\mathrm{rc}_{\#}(A,B,K):=\inf-\lambda^{+}(x_{0},A,B,K,\alpha)\text{ and
}\mathrm{rd}_{\#}(A,B,K):=\inf\lambda^{-}(x_{0},A,B,K,\alpha),
\]
where in both cases the infimum is taken over all $(\alpha,x_{0})$ which are
$\#$-admissible for $A,B,K$. If the considered $A,B,K$ are clear from the
context, we omit these arguments here and in other expressions. Furthermore,
let
\[
\mathrm{RC}_{\#}(A,B):=\sup_{K\in M_{m,d}(\mathbb{R})}\mathrm{rc}%
_{\#}(A,B,K)\text{ and }\mathrm{RD}_{\#}(A,B):=\sup_{K\in M_{m,d}(\mathbb{R}%
)}\mathrm{rd}_{\#}(A,B,K).
\]

\begin{lemma}
\label{l-fun} Let $I:=[\mu/T,1]$ and $K$ in $\mathrm{PLARC}(A,B)$. Consider
the set $C$ from Theorem~\ref{uniform-time} for the projected system
$(\arabic{cs_2})_{I}$. Fix a point $
\Pi x_{-}\in\mathrm{int}C$, and let
$x_{0}:=e^{(T-\mu)(A+BK)}x_{-}$. Then for every $\varepsilon>0$ there exists
$\bar{\tau}>0$ such that for every $t>\bar{\tau}$ and every $\alpha
\in\mathcal{{G}}(T,\mu)$ there exists $\alpha_{\#}\in\mathcal{{G}}(T,\mu)$
with $(\alpha_{\#},x_{0})$ $\#$-admissible for $A,B,K$ satisfying
\begin{equation}
\left\vert \lambda^{+}(x_{0},A,B,K,\alpha_{\#})-\frac{1}{t}\log|x(t;0,x_{0}%
,A,B,K,\alpha)|\right\vert <\varepsilon. \label{est0}%
\end{equation}

\end{lemma}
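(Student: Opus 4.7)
The plan is to exhibit, for each $\alpha\in\mathcal{G}(T,\mu)$ and each sufficiently large $t$, a $\tau$-periodic signal $\alpha_\#\in\mathcal{G}(T,\mu)$, with $\tau=t+\Delta$ and $\Delta$ uniformly bounded in $\alpha$ and $t$, that coincides with $\alpha$ on $[0,t]$ and closes up the projective trajectory from $\Pi y(t)$ back to $\Pi x_0$ on the segment $[t,\tau]$; here $y(t):=x(t;0,x_0,A,B,K,\alpha)$. Concretely, set $\alpha_\#=\alpha$ on $[0,t]$; insert a buffer $\alpha_\#\equiv 1$ on $[t,t+T]$ (needed for the PE estimate at $s=t$, as explained below); on $[t+T,t+T+\tau_r]$ put $\alpha_\#=v(\cdot -t-T)$, where $v\in L^\infty(\mathbb{R},I)$ with $I=[\mu/T,1]$ is the steering control supplied by Theorem~\ref{uniform-time}(ii), bringing the projected trajectory from $\Pi e^{T(A+BK)}y(t)$ to $\Pi x_-$ in some time $\tau_r\le\hat\tau$; and on the final interval $[t+T+\tau_r,\tau]$ of length exactly $T-\mu$, put $\alpha_\#\equiv 1$. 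The definition $x_0=e^{(T-\mu)(A+BK)}x_-$ makes this last buffer propagate $\Pi x_-$ to $\Pi x_0$, so that $(\alpha_\#,x_0)$ extended $\tau$-periodically is $\#$-admissible, with $\tau=t+2T-\mu+\tau_r$.

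Once the construction is in hand, the Lyapunov exponent is straightforward to estimate. Since $\Pi x_0$ is a periodic point of the projected flow under $\alpha_\#$, $x_0$ is an eigenvector of the monodromy matrix $M_\#$ with some real eigenvalue $\lambda$, and $\lambda^+(x_0,A,B,K,\alpha_\#)=\tau^{-1}\log|\lambda|$. The variation-of-constants formula gives $M_\# x_0=Q\,y(t)$ with $Q:=e^{(T-\mu)(A+BK)}\Psi_v e^{T(A+BK)}$, where $\Psi_v$ is the propagator of $\dot x=(A+vBK)x$ on $[0,\tau_r]$. Since $\tau_r\le\hat\tau$ and $\|v\|_\infty\le 1$, both $\|Q\|$ and $\|Q^{-1}\|$ are bounded by a constant depending only on $A,B,K,T,\mu$, hence $|\log|M_\# x_0|-\log|y(t)||\le C_0$ uniformly. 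Combining this with the Gronwall bound $|\log|y(t)||\le ct+|\log|x_0||$ for $c=\|A\|+\|BK\|$ and with $\Delta\le 2T+\hat\tau$, expanding $\lambda^+-t^{-1}\log|y(t)|$ yields $|\lambda^+(x_0,A,B,K,\alpha_\#)-t^{-1}\log|y(t)||\le (c\Delta+C_0+|\log|x_0||)/t$, which falls below $\varepsilon$ for $t$ larger than a suitable $\bar\tau$ depending only on $\varepsilon$, $A,B,K,T,\mu$ and $x_-$.

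The hard step is verifying that $\alpha_\#\in\mathcal{G}(T,\mu)$. Windows $[s,s+T]\subset[0,t]$ inherit PE from $\alpha$; windows entirely inside $[t,\tau]$ are immediate since $\alpha_\#\ge\mu/T$ there. The critical windows straddle the transition $s=t$ or the period boundary $s=\tau$. In each case one combines the basic inequality $\int_a^b\alpha\ge\mu-(T-(b-a))$, derivable from the PE of $\alpha$ together with $\alpha\le 1$, with the $\alpha_\#\equiv 1$ buffers of lengths $T$ and $T-\mu$, and checks by case analysis that the resulting integral is at least $\mu$; the specific values of the buffer lengths are tuned both by this arithmetic (the value $T-\mu$ matches exactly the largest ``silence'' of $\alpha$ compatible with PE at the start of the next period) and by the requirement that the projected trajectory close up at $\Pi x_0$. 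This case analysis is the technical heart of the argument, and in the delicate regime where $\mu$ is close to $T$ it may be necessary to exploit the freedom in the choice of $\Pi x_-\in\mathrm{int}\,C$; everything else in the proof reduces to bookkeeping with bounded constants divided by the diverging time $t$.
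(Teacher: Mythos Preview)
Your construction and estimate are essentially the paper's: restrict $\alpha$ to $[0,t]$, append a constant-$1$ buffer, then a steering control $v$ with values in $[\mu/T,1]$ supplied by Theorem~\ref{uniform-time}(ii) to bring the projective trajectory back to $\Pi x_-$, then a final constant-$1$ buffer of length exactly $T-\mu$ so that the period closes at $\Pi x_0$, and extend periodically; the comparison of $\lambda^+(x_0,\alpha_\#)$ with $t^{-1}\log|y(t)|$ then reduces, via Gronwall and the uniform bound on the closing propagator $Q$, to a constant divided by $t$. The only differences are cosmetic: the paper takes its first buffer of length $T-\mu$ rather than your $T$, and it simply asserts ``By construction, $\alpha_\#^t\in\mathcal{G}(T,\mu)$'' in place of the window-by-window case analysis you sketch.

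One correction: your closing hedge that one ``may need to exploit the freedom in the choice of $\Pi x_-\in\mathrm{int}\,C$'' is misplaced. The point $x_-$ is \emph{fixed in the hypothesis} of the lemma; it is not yours to choose. Whatever care the PE verification needs must be absorbed into the construction of $\alpha_\#$ itself (the buffer lengths, or how the $v$-segment is padded), not into $x_-$. Drop the hedge and finish the case analysis.
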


\begin{proof}
First note that $\Pi x_{0}\in C$, since $\Pi x_{-}$ is in the invariant
control set $C$ and the control $u\equiv1$ has values in $I=[\mu/T,1]$.

For every $\alpha\in\mathcal{{G}}(T,\mu)$ and $t>0$ consider the signal
$\alpha_{\#}^{t}$ obtained through the following procedure: Let $\alpha
_{\#}^{t}(s)=\alpha(s)$ for $s\in\lbrack0,t]$ and $\alpha_{\#}^{t}(s)=1$ for
$s\in(t,t+T-\mu]$. By Theorem~\ref{uniform-time} there exist a time $\hat
{\tau}$ independent of $\alpha(\cdot)$ and $t$ and a control $v^{t}%
:[0,\tau^{(t)}]\rightarrow\lbrack\mu/T,1]$ with $\tau^{(t)}\leq\hat{\tau}$
such that $\Pi x(\tau^{(t)};0,y^{t},v^{t})=\Pi x_{-}$, where $y^{t}%
:=x(t+T-\mu;0,x_{0},\alpha_{\#}^{t})$. The definition of $\alpha_{\#}^{t}$ is
then concluded by taking%
\[
\alpha_{\#}^{t}(s)=\left\{
\begin{array}
[c]{ccc}%
v^{t}(s-(T-\mu)) & \text{for} & s\in(t+T-\mu,t+T-\mu+\tau^{(t)}]\\
1 & \text{for} & s\in(t+T-\mu+\tau^{(t)},t+2(T-\mu)+\tau^{(t)}],
\end{array}
\right.
\]
and extending $\alpha_{\#}^{t}$ periodically on $\mathbb{R}$ with period%
\[
T^{(t)}:=t+2(T-\mu)+\tau^{(t)}.
\]
Then $\Pi x(T^{(t)};x_{0},A,B,K,\alpha_{\#}^{t})=\Pi x_{0}$, hence periodicity
in projective space holds. By construction, $\alpha_{\#}^{t}\in\mathcal{G}%
(T,\mu)$ and $(\alpha^{t},x_{0})$ is $\#$-admissible for $A,B,K$. Periodicity
in projective space and homogeneity of the evolution imply
\begin{align}
\lambda^{+}\left(  x_{0},\alpha_{\#}^{t}\right)   &  =\lim_{k\rightarrow
\infty}\frac{1}{kT^{(t)}}\log|x(kT^{(t)};0,x_{0},\alpha_{\#}^{t}%
)|\label{cs_0}\\
&  =\frac{1}{T^{(t)}}\log|x(T^{(t)};0,x_{0},\alpha_{\#}^{t})|.\nonumber
\end{align}
Notice now that for every $t>0$
\[
x(T^{(t)};0,x_{0},\alpha_{\#}^{t})=R^{(t)}x(t;0,x_{0},\alpha_{\#}^{t}),
\]
where $R^{(t)}$ denotes the principal fundamental solution on the interval
$\left[  t,T^{(t)}\right]  $ corresponding to $\alpha_{\#}^{t}$, evaluated at
time $T^{(t)}$.

Since $T^{(t)}-t\leq2(T-\mu)+\hat{\tau}$, Gronwall's lemma immediately yields
the existence of $C_{0}>1$ independent of $t$ and $\alpha(\cdot)$ such that
$\left\Vert R^{(t)}\right\Vert ,\left\Vert \left(  R^{(t)}\right)
^{-1}\right\Vert \leq C_{0}$ for all $t>0$.

Then $\alpha_{\#}^{t}(s)=\alpha(s)$ for $s\in\lbrack0,t]$ implies%
\[
\left\vert \log\left\vert x(T^{(t)};0,x_{0},\alpha_{\#}^{t})\right\vert
-\log\left\vert x(t;0,x_{0},\alpha)\right\vert \right\vert <\log C_{0}.
\]
It follows that
\begin{align*}
\lefteqn{\left\vert \frac{1}{T^{(t)}}\log\left\vert x(T^{(t)};0,x_{0}%
,\alpha_{\#}^{t})\right\vert -\frac{1}{t}\log\left\vert x(t;0,x_{0}%
,\alpha)\right\vert \right\vert }\\
&  \leq\frac{1}{T^{(t)}}\left\vert \log\left\vert x(T^{(t)};0,x_{0}%
,\alpha_{\#}^{t})\right\vert -\log\left\vert x(t;0,x_{0},\alpha_{\#}%
^{t})\right\vert \right\vert +\left\vert \frac{1}{T^{(t)}}-\frac{1}%
{t}\right\vert \left\vert \log\left\vert x(t;0,x_{0},\alpha_{\#}%
^{t})\right\vert \right\vert \\
\lefteqn{<\frac{1}{T^{(t)}}\left[  \log C_{0}+(2(T-\mu)+\hat{\tau})\frac{1}%
{t}\left\vert \log\left\vert x(t;0,x_{0},\alpha_{\#}^{t})\right\vert
\right\vert \right]  .}%
\end{align*}
Since $A+\alpha BK$ is uniformly bounded for $\alpha\in\lbrack0,1]$,
Gronwall's lemma again shows that
\[
\frac{1}{t}\left\vert \log\left\vert x(t;0,x_{0},\alpha_{\#}^{t})\right\vert
\right\vert <C_{1}%
\]
for a constant $C_{1}>0$, uniformly with respect to $t\geq1$ and $\alpha
\in\mathcal{G}(T,\mu)$. It follows that for $t\geq1$%
\[
\left\vert \frac{1}{T^{(t)}}\log\left\vert x(T^{(t)};0,x_{0},\alpha_{\#}%
^{t})\right\vert -\frac{1}{t}\log\left\vert x(t;0,x_{0},\alpha_{\#}%
^{t})\right\vert \right\vert <\frac{1}{t}\left(  \log C_{0}+(2T+\hat{\tau
})C_{1}\right)  .
\]
Assertion \eqref{est0} then follows from \eqref{cs_0} by taking $t\geq
\bar{\tau}:=1+\varepsilon^{-1}\left(  \log C_{0}+(2T+\hat{\tau})C_{1}\right)
$.
\end{proof}

The periodic approximation provided by Lemma \ref{uniform-time} gives the
following approximation result for the rate of convergence.

\begin{proposition}
\label{29} Let
$(A,B)$ be in $P_{d,m}$.
For every $K\in\mathrm{PLARC}(A,B)$, we have $\mathrm{rc}(A,B,K)=\mathrm{rc}%
_{\#}(A,B,K)$.
\end{proposition}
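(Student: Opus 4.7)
The inequality $\mathrm{rc}(A,B,K)\le\mathrm{rc}_\#(A,B,K)$ is immediate, since every $\#$-admissible pair $(\alpha_\#,x_0)$ belongs to $\mathcal{G}(T,\mu)\times(\mathbb{R}^d\setminus\{0\})$ and the infimum defining $\mathrm{rc}_\#$ is thus taken over a subfamily of the pairs entering the definition of $\mathrm{rc}$. For the converse, I plan to exploit Lemma~\ref{l-fun}: fix $\varepsilon>0$, pick $\Pi x_-\in\mathrm{int}\,C$, and let $x_0=e^{(T-\mu)(A+BK)}x_-$. For every $\alpha\in\mathcal{G}(T,\mu)$ and every $t>\bar\tau$, the lemma yields $\alpha_\#\in\mathcal{G}(T,\mu)$ with $(\alpha_\#,x_0)$ $\#$-admissible and $\lambda^+(x_0,\alpha_\#)>\tfrac{1}{t}\log|x(t;0,x_0,A,B,K,\alpha)|-\varepsilon$. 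Since $-\mathrm{rc}_\#(A,B,K)\ge\lambda^+(x_0,\alpha_\#)$, choosing a sequence $t_n\to\infty$ along which $\tfrac{1}{t_n}\log|x(t_n;0,x_0,A,B,K,\alpha)|$ realises $\lambda^+(x_0,A,B,K,\alpha)$ and then letting $\varepsilon\to0$ gives
$$-\mathrm{rc}_\#(A,B,K)\ge\sup_{\alpha\in\mathcal{G}(T,\mu)}\lambda^+(x_0,A,B,K,\alpha).$$

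The proof thus reduces to the equality $\sup_\alpha\lambda^+(x_0,\alpha)=-\mathrm{rc}(A,B,K)$, and this is where PLARC plays its role. First, $\Pi x_0\in\mathrm{int}\,C$ because the projected time-$(T-\mu)$ flow of the constant control $1\in[\mu/T,1]$ is a diffeomorphism of $\mathbb{RP}^{d-1}$ which maps the forward-invariant open set $\mathrm{int}\,C$ into itself. Then Theorem~\ref{uniform-time}(ii) applied with $q_-=\Pi x_0$ provides $\hat\tau>0$ such that from any $\Pi q_0\in\mathbb{RP}^{d-1}$ one may steer to $\Pi x_0$ within time $\hat\tau$ by a control in $L^\infty(\mathbb{R},[\mu/T,1])\subseteq\mathcal{G}(T,\mu)$. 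Given $(y,\alpha^*)$ with $\lambda^+(y,\alpha^*)\ge-\mathrm{rc}(A,B,K)-\varepsilon/2$, the plan is to build $\tilde\alpha\in\mathcal{G}(T,\mu)$ by concatenating a PE steering segment driving $x_0$ to a scalar multiple of $y$ (or of some forward shift $\Phi_{\alpha^*}(s)y$ whose projection is already interior to $C$), a buffer of constant signal $1$ of length $T$ preserving $(T,\mu)$-admissibility at the junction, and the (possibly shifted) signal $\alpha^*$. By linearity of the flow and the cocycle identity $\lambda^+(\Phi_{\alpha^*}(s)y,\alpha^*(\cdot+s))=\lambda^+(y,\alpha^*)$, the Lyapunov exponent of $\tilde\alpha$ starting at $x_0$ then coincides with $\lambda^+(y,\alpha^*)$, which closes the chain of inequalities.

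The main technical obstacle is that Theorem~\ref{uniform-time}(ii) delivers controllability \emph{to} interior points of $C$ but not \emph{from} $\Pi x_0$ to an arbitrary target $\Pi y\not\in\mathrm{int}\,C$, so the steering prefix in the previous paragraph cannot be carried out for every $(y,\alpha^*)$ as stated. To dispose of it I would appeal to a Morse-decomposition argument on the projected vector bundle $\mathcal{G}(T,\mu)\times\mathbb{RP}^{d-1}$, in the spirit of Colonius and Kliemann \cite{ColKli}, showing that under PLARC the supremum $-\mathrm{rc}(A,B,K)$ is approximated by pairs $(y,\alpha^*)$ whose forward $\alpha^*$-orbits accumulate in $\mathrm{int}\,C$; after a finite time shift the steering construction then becomes licit and yields the required lower bound on $\sup_\alpha\lambda^+(x_0,\alpha)$.
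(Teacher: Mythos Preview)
Your first paragraph is fine and matches the paper: the trivial inequality plus Lemma~\ref{l-fun} gives $-\mathrm{rc}_\#(A,B,K)\ge\lambda^+(x_0,A,B,K,\alpha)$ for every $\alpha$, hence $-\mathrm{rc}_\#(A,B,K)\ge\sup_{\alpha}\lambda^+(x_0,A,B,K,\alpha)$ for the single point $x_0$ coming out of the lemma.

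The gap you correctly flag is real for your approach, but the paper closes it by a completely different and much lighter device than either steering or a Morse decomposition. The point is that you do not need to reach an arbitrary $(y,\alpha^\ast)$ from $x_0$. What you need is that the specific point $x_0$ already sees the maximal Lyapunov exponent, i.e.\ that for \emph{every} fixed $\alpha_0$ one has $\Lambda^+(A,B,K,\alpha_0)=\max_i\lambda^+(x_i,A,B,K,\alpha_0)$ whenever $x_1,\dots,x_d$ is a basis of $\mathbb{R}^d$. This is a classical fact for linear nonautonomous systems (the paper cites \cite[Chapter~2]{cesari}): the maximal Lyapunov exponent is attained on every basis. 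Since $\mathrm{int}\,C$ is open and nonempty, the set $\{e^{(T-\mu)(A+BK)}x:\Pi x\in\mathrm{int}\,C\}$ contains a basis $x_1,\dots,x_d$, each of which is an admissible $x_0$ for Lemma~\ref{l-fun}. Given $\alpha_0$ with $\Lambda^+(A,B,K,\alpha_0)>-m$, choose the $x_i$ realising this exponent, apply Lemma~\ref{l-fun} with that $x_i$, and you are done. No controllability from $x_0$ to anything, and no vector-bundle machinery, is needed.

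So your proposal is incomplete as it stands: the Morse-decomposition step is only sketched and would require nontrivial work to make rigorous here (in particular you would need to argue, under PLARC, that the supremal Lyapunov exponent is realised or approximated by trajectories accumulating on $\mathrm{int}\,C$, which is essentially the Floquet/periodic-approximation content you are trying to \emph{prove}). Replace that paragraph by the basis argument above and the proof goes through cleanly.
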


\begin{proof}
For every $K$ the inequality
\begin{equation}
\mathrm{rc}(A,B,K)\leq\mathrm{rc}_{\#}(A,B,K) \label{triv-in}%
\end{equation}
is trivially satisfied. In order to prove the converse inequality, we fix
$K\in\mathrm{PLARC}(A,B)$, a constant $m\in\mathbb{R}$ such that
$\mathrm{rc}(A,B,K)<m$, and we show that $\mathrm{rc}_{\#}(A,B,K)<m$. By
definition,
there exist a $(T,\mu)$-signal $\alpha_{0}$ and a vector $x_{0}$ such that
\begin{equation}
\lambda^{+}(x_{0},A,B,K,\alpha_{0})=\limsup_{t\rightarrow+\infty}\frac{1}%
{t}\log|x(t;0,x_{0},A,B,K,\alpha_{0})|>-m. \label{155}%
\end{equation}
For a given function $\alpha_{0}$ the maximal Lyapunov exponent $\Lambda
^{+}(A,B,K,\alpha_{0})$ is attained on every basis of $\mathbb{R}^{d}$ (see
\cite[Chapter 2]{cesari}).
Since, moreover, $\mathrm{int}C$ is nonvoid, the set%
\[
\left\{  e^{(T-\mu)(A+BK)}x\left\vert {}\right.  \Pi x\in\mathrm{int}%
C\right\}
\]
contains a basis of $\mathbb{R}^{d}$. This implies that in \eqref{155} the
point $x_{0}$ can be chosen in this set. Now we can apply Lemma~\ref{l-fun}
with $\varepsilon=\frac{1}{2}(\lambda^{+}(x_{0},A,B,K,\alpha_{0})+m)$,
$\alpha=\alpha_{0}$, and $t$ large enough such that
\[
\frac{1}{t}\log\left\vert x(t;0,x_{0},\alpha_{0})\right\vert >-m+\varepsilon.
\]
Thus there is $\alpha_{\#}^{t}$ such that $(x_{0},\alpha_{\#}^{t})$ is
$\#$-admissible with
\[
\lambda^{+}\left(  x_{0},A,B,K,\alpha_{\#}^{t}\right)  >-m.
\]
It follows that $\mathrm{rc}_{\#}(A,B,K)<m$.
\end{proof}

Next we analyze the relations between convergence and divergence rates using
time reversal in PE systems. The time reversed system corresponding to a
non-autonomous control system of the type $\dot{x}=Ax+\alpha(t)Bu$ is $\dot
{x}=-Ax-\alpha(-t)Bu$. This justifies the notation $\alpha_{-}(t)=\alpha(-t)$
for every signal $\alpha$.
Moreover, it is clear that the two systems have the same accessibility
properties and, in particular, that $\mathrm{PLARC}(A,B)=\mathrm{PLARC}%
(-A,-B)$.

The rates of convergence and divergence for $\#$-admissible pairs satisfy the
following property under time reversal.

\begin{proposition}
\label{time-reversal} Let $(A,B)$ be in $P_{d,m}$ and $K\in M_{m,d}%
(\mathbb{R})$. Then $\mathrm{rd}_{\#}(-A,-B,K)=\mathrm{rc}_{\#}(A,B,K)$.
\end{proposition}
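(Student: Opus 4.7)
The plan is to observe that each $\#$-admissible pair admits an honest Floquet exponent, and that time reversal induces an involution between $\#$-admissible pairs for $(A,B,K)$ and for $(-A,-B,K)$ which simply flips the sign of this exponent. The equality of the infima is then automatic.

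Concretely, suppose $(\alpha,x_0)$ is $\#$-admissible for $A,B,K$ with period $\tau>0$, and let $M:=\Phi(\tau,0)$ denote the monodromy of $\dot{x}=(A+\alpha(t)BK)x$. Periodicity of $\alpha$ gives the cocycle identity $\Phi(t+\tau,0)=\Phi(t,0)M$, and $\tau$-periodicity in $\mathbb{RP}^{d-1}$ of the projected trajectory through $x_0$ forces $Mx_0=\lambda x_0$ for some $\lambda\in\mathbb{R}\setminus\{0\}$. Hence $x(t+\tau)=\lambda x(t)$, so both $\lambda^\pm(x_0,A,B,K,\alpha)$ exist as a genuine limit equal to $(\log|\lambda|)/\tau$.

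For the time reversal, set $\alpha_-(t):=\alpha(-t)$ and $y(t):=x(-t;0,x_0,A,B,K,\alpha)$. A direct differentiation shows that $y$ solves $\dot{y}=(-A-\alpha_-(t)BK)y$ with $y(0)=x_0$, so the state transition matrices of the two equations are related by $\tilde\Phi(t,0)=\Phi(-t,0)$. In particular, the monodromy of the reversed system on $[0,\tau]$ is $\tilde M=\Phi(-\tau,0)=M^{-1}$, and $Mx_0=\lambda x_0$ yields $\tilde M x_0=\lambda^{-1}x_0$. Therefore $(\alpha_-,x_0)$ is $\#$-admissible for $-A,-B,K$, and
\[
\lambda^-(x_0,-A,-B,K,\alpha_-)=\frac{\log|\lambda^{-1}|}{\tau}=-\frac{\log|\lambda|}{\tau}=-\lambda^+(x_0,A,B,K,\alpha).
\]

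The map $(\alpha,x_0)\mapsto(\alpha_-,x_0)$ is an involution, so it is a bijection between the set of $\#$-admissible pairs for $(A,B,K)$ and the set of $\#$-admissible pairs for $(-A,-B,K)$; under this bijection, the quantity $-\lambda^+$ corresponds to $\lambda^-$. Taking the infimum on both sides delivers exactly $\mathrm{rc}_\#(A,B,K)=\mathrm{rd}_\#(-A,-B,K)$. I do not expect any serious obstacle: the entire argument reduces to verifying that Floquet multipliers are inverted under time reversal while the eigenvector $x_0$ and the period $\tau$ are preserved, with the mild bookkeeping that the $\#$-admissibility condition transports correctly.
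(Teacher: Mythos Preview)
Your argument is correct and follows the same approach as the paper: both proofs rest on the observation that $(\alpha,x_0)\mapsto(\alpha_-,x_0)$ is a bijection between $\#$-admissible pairs for $(A,B,K)$ and for $(-A,-B,K)$ under which $-\lambda^+$ corresponds to $\lambda^-$, so the two infima agree. You have simply made the Floquet structure explicit (monodromy, eigenvalue $\lambda$, its inversion under time reversal), which the paper leaves implicit in a one-line remark.
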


\begin{proof}
Note that $(\alpha,x_{0})$ is $\#$-admissible for $A,B,K$ if and only if
$(\alpha_{-},x_{0})$ is $\#$-admissible for $-A,-B,K$ and $-\lambda^{+}%
(x_{0},A,B,K,\alpha)=\lambda^{-}(x_{0},-A,-B,K,\alpha_{-}).$

Then, by taking the infimum with respect to all $\#$-admissible pairs
$(\alpha,x_{0})$ for $A,B,K$ one concludes
\begin{align*}
\mathrm{rc}_{\#}(A,B,K)  &  =\inf\{-\lambda^{+}(x_{0},A,B,K,\alpha)\left\vert
{}\right.  (\alpha,x_{0})\text{ }\#\text{-admissible for }A,B,K\}\\
&  =\inf\{\lambda^{-}(x_{0},-A,-B,K,\alpha_{-})\left\vert {}\right.
(\alpha,x_{0})\text{ }\#\text{-admissible for }-A,-B,K\}\\
&  =\mathrm{rd}_{\#}(-A,-B,K).
\end{align*}

\end{proof}

Next we provide a property dealing with the rate of divergence and
$\#$-admissible pairs. The proof is an adaptation of \cite[\S 3.4]{CK-article}.

\begin{proposition}
\label{211} Let $(A,B)$ be in $P_{d,m}$ and $K\in\mathrm{PLARC}(A,B)$. Then
$\mathrm{rd}_{\#}(A,B,K)=\mathrm{rd}(A,B,K)$.
\end{proposition}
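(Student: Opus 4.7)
The plan is to adapt the strategy of Proposition~\ref{29}. The easy direction $\mathrm{rd}_{\#}(A,B,K) \geq \mathrm{rd}(A,B,K)$ is immediate, because the $\#$-admissible pairs form a subset of $\mathcal{G}(T,\mu)\times(\mathbb{R}^{d}\setminus\{0\})$ and the infimum defining $\mathrm{rd}_{\#}$ is taken over fewer candidates.

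For the reverse inequality, suppose $\mathrm{rd}(A,B,K) < m$. Then there exist $\alpha_{0} \in \mathcal{G}(T,\mu)$ and $x_{0} \neq 0$ with $\lambda^{-}(x_{0},A,B,K,\alpha_{0}) < m$, so along a sequence $t_{n} \to \infty$ one has $\tfrac{1}{t_{n}}\log|x(t_{n};0,x_{0},\alpha_{0})| < m - 2\varepsilon$ for some $\varepsilon>0$. I would feed this data into the periodization machinery of Lemma~\ref{l-fun}: starting from an initial condition of the form $x_{0} = e^{(T-\mu)(A+BK)} x_{-}$ with $\Pi x_{-} \in \mathrm{int}\,C$, concatenate $\alpha_{0}|_{[0,t_{n}]}$ with a cushion $\alpha\equiv 1$ of length $T-\mu$, a steering segment with values in $[\mu/T,1]$ bringing the projective state to $\Pi x_{-}$, and a second cushion of length $T-\mu$, then extend periodically with period $T^{(n)}$. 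The resulting signal $\alpha_{\#}^{(n)}$ belongs to $\mathcal{G}(T,\mu)$ and makes $(\alpha_{\#}^{(n)},x_{0})$ $\#$-admissible. The decisive observation is that projective periodicity together with linearity forces $x(T^{(n)};0,x_{0},\alpha_{\#}^{(n)}) = \sigma_{n} x_{0}$ for some nonzero scalar, so $\lambda^{-}(x_{0},\alpha_{\#}^{(n)}) = \lambda^{+}(x_{0},\alpha_{\#}^{(n)}) = \tfrac{1}{T^{(n)}}\log|\sigma_{n}|$. The Gronwall-type estimate in Lemma~\ref{l-fun} shows this common value agrees with $\tfrac{1}{t_{n}}\log|x(t_{n};0,x_{0},\alpha_{0})|$ up to $\varepsilon$ for large $n$, whence $\lambda^{-}(x_{0},\alpha_{\#}^{(n)}) < m - \varepsilon$ and therefore $\mathrm{rd}_{\#}(A,B,K) < m$.

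The main obstacle is that Lemma~\ref{l-fun} demands $x_{0} \in E := \{e^{(T-\mu)(A+BK)} x : \Pi x \in \mathrm{int}\,C\}$, a requirement needed so that the final cushion of length $T-\mu$ closes up the projective trajectory at $\Pi x_{0}$. In Proposition~\ref{29} this was bypassed by invoking that the maximal Lyapunov exponent $\Lambda^{+}(\alpha_{0})$ is attained on every basis of $\mathbb{R}^{d}$, which allowed placing $x_{0}$ in $E$ while preserving the bound on $\lambda^{+}$. The analogue for $\Lambda^{-}$ genuinely fails, as the minimal Lyapunov exponent is attained only on a distinguished invariant subspace that need not meet $E$. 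I would circumvent this by exploiting shift-invariance of $\mathcal{G}(T,\mu)$: the pair $(x(s;0,x_{0},\alpha_{0}),\alpha_{0}(\cdot+s))$ has the same $\liminf$ growth rate as $(x_{0},\alpha_{0})$ for every $s \geq 0$, so it suffices to find $s$ with $\Pi x(s;0,x_{0},\alpha_{0}) \in \Pi E$. Using the PLARC hypothesis, Theorem~\ref{uniform-time}, and the invariant control set theory of \cite{ColKli}, one argues that under the product flow on the compact space $\mathcal{G}(T,\mu)\times\mathbb{RP}^{d-1}$ the projective trajectory visits (and revisits) the open set $\Pi E \subseteq \mathrm{int}\,C$ at unbounded times. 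Relocating the initial condition via such a shift puts us into the hypotheses of Lemma~\ref{l-fun} and yields the desired $\#$-admissible approximation; this chain-recurrence step is the crux of the argument and the point at which the proof of \cite[\S 3.4]{CK-article} must be carried over to the PE setting.
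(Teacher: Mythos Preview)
Your approach diverges from the paper's at precisely the point you flag as the crux, and the workaround you sketch does not go through. The assertion that the projective trajectory $t\mapsto \Pi x(t;0,x_{0},\alpha_{0})$ must visit $\Pi E\subset\mathrm{int}\,C$ under the given signal $\alpha_{0}$ is unjustified and, in general, false. The set $C$ is the invariant control set for the control range $I=[\mu/T,1]$, whereas $\alpha_{0}$ takes values in $[0,1]$; more importantly, the direction realising $\Lambda^{-}(\alpha_{0})$ typically lies in the finest Oseledets subspace, and the corresponding projective trajectory is trapped in a Morse set of the skew-product flow on $\mathcal{G}(T,\mu)\times\mathbb{RP}^{d-1}$ distinct from the one containing $C$. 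Chain transitivity of the base and the PLARC hypothesis do not force a \emph{specific} orbit to enter a \emph{specific} open set; they only give controllability, i.e.\ the existence of \emph{some} signal steering into $\mathrm{int}\,C$. So the shift-in-time trick cannot, in general, relocate $x_{0}$ into $E$ while preserving the pair $(\alpha_{0},x_{0})$.

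The paper circumvents this obstacle rather than confronting it. It never tries to periodize a trajectory with small $\lambda^{-}$. Instead it introduces the quantities $\delta(A,B,K)$ and $\delta^{\ast}(A,B,K)$ built from norms and conorms of fundamental solutions over \emph{periodic} signals, proves the duality $\delta^{\ast}(A,B,K)=-\delta(-A,-B,K)$ via $\|g\|=m(g^{-1})^{-1}$ and time reversal, and combines this with $\mathrm{rd}_{\#}(A,B,K)=\mathrm{rc}_{\#}(-A,-B,K)$ (Proposition~\ref{time-reversal}). The only place PLARC is invoked is to show $\mathrm{rc}_{\#}(A,B,K)\leq -\delta(A,B,K)$, and there the basis argument \emph{is} legitimate because the supremal Lyapunov exponent is attained on every basis. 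In short, the paper exports the problem to the time-reversed system so that the ``bad'' infimum over $x_{0}$ becomes a ``good'' supremum, and then applies Lemma~\ref{l-fun} exactly as in Proposition~\ref{29}. Your direct approach would need a genuinely new ingredient to repair the visiting claim; as written it has a gap.
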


\begin{proof}
The inequality $\mathrm{rd}(A,B,K)\leq\mathrm{rd}_{\#}(A,B,K)$ holds
trivially. In order to show the converse define for $t>0$
\[
S^{t}(A,B,K)=\{R(t,\alpha,A,B,K)\mid\alpha\in\mathcal{{G}}(T,\mu
),~\alpha~t\text{-periodic}\},
\]
where $R(\cdot,\alpha,A,B,K)$ denotes the solution of the Cauchy problem
\begin{equation}
\dot{R}=(A+\alpha BK)R,\quad R(0)=\mathrm{Id}. \label{R}%
\end{equation}
Thus $S^{t}(A,B,K)$ consists of fundamental solutions for $\dot{x}=A+\alpha
BK$ evaluated at time $t$. Note that not every function $\alpha$ satisfying
the persistent excitation condition (\ref{EP}) on $[0,t]$ may be extended to a
$(T,\mu)$-signal in $\mathcal{{G}}(T,\mu)$. Set
\[
\delta(A,B,K)=\limsup_{t\rightarrow+\infty}\frac{1}{t}\sup_{g\in S^{t}%
(A,B,K)}\log\Vert g\Vert,\qquad\delta^{\ast}(A,B,K)=\liminf_{t\rightarrow
+\infty}\frac{1}{t}\inf_{g\in S^{t}(A,B,K)}\log m(g),
\]
where $m(\cdot)$ is the \emph{conorm} corresponding to the vector norm
$|\cdot|$, defined by
\[
m(g)=\min\{|gx|\,\mid|x|=1\}.
\]
Let us prove that $S^{t}(-A,-B,K)=\{g^{-1}\mid g\in S^{t}(A,B,K)\}=\left(
S^{t}(A,B,K)\right)  ^{-1}$ for every $t>0$. This follows by time reversal:
Set $V(s)=R(s,\alpha,-A,-B,K)$ for $s\in\mathbb{R}$, with $\alpha$
$t$-periodic and in $\mathcal{{G}}(T,\mu)$. Then, $V(0)=\mathrm{Id}$ and
\[
\dot{V}=(-A-\alpha BK)V.
\]
Fix $s\geq0$ and let $G_{s}(t)=V(t)V(s)^{-1}$. Then, for $t\in\lbrack0,s]$,
$G_{s}(s-t)=R(t,\alpha_{s},A,B,K)$, where $\alpha_{s}(\tau):=\alpha(s-\tau)$,
$\tau\in\mathbb{R}$. Therefore, for every $s\geq0$, $V(s)^{-1}=R(s,\alpha
_{s},A,B,K)$. Hence, $V(t)=R(t,\alpha_{-},A,B,K)^{-1}$, since $\alpha
_{t}=\alpha_{-}$ by $t$-periodicity.
Hence
\[
\delta(-A,-B,K)=\limsup_{t\rightarrow+\infty}\frac{1}{t}\sup\log\Vert
g\Vert,\qquad\delta^{\ast}(-A,-B,K)=\liminf_{t\rightarrow+\infty}\frac{1}%
{t}\inf\log m(g),
\]
where the supremum and the infimum are taken over all $g\in S^{t}%
(-A,-B,K)=(S^{t}(A,B,K))^{-1}$. Using as in \cite[Lemma 3.1]{CK-article} the
relation $\Vert g\Vert=m(g^{-1})^{-1}$, we have
\[
\inf_{g\in S^{t}(A,B,K)}\log m(g)=-\sup_{g\in(S^{t}(A,B,K))^{-1}}\log\Vert
g\Vert.
\]
Dividing by $t$ and taking the $\liminf$ as $t$ goes to $+\infty$ one finds
\begin{equation}
\delta^{\ast}(A,B,K)=-\delta(-A,-B,K). \label{deltas}%
\end{equation}
Recall that by Proposition~\ref{time-reversal}
\begin{equation}
\mathrm{rd}_{\#}(A,B,K)=\mathrm{rc}_{\#}(-A,-B,K). \label{ddee}%
\end{equation}
Let us next prove that
\begin{equation}
\delta^{\ast}(A,B,K)\leq\mathrm{rd}(A,B,K). \label{tobere}%
\end{equation}
For $t>0$ define
\[
Q^{t}(A,B,K)=\{R(t,\alpha,A,B,K)\mid\alpha\in\mathcal{{G}}(T,\mu)\},
\]
and set
\[
\delta^{\ast}(t):=\inf_{g\in S^{t}(A,B,K)}\log m(g),\quad\delta_{1}^{\ast
}(t):=
\inf_{g\in Q^{t}(A,B,K)}\log m(g).
\]
We next prove that
\begin{equation}
\delta^{\ast}(A,B,K)=\liminf_{t\rightarrow+\infty}\delta^{\ast}(t)/t=\liminf
_{t\rightarrow+\infty}\delta_{1}^{\ast}(t)/t. \label{X}%
\end{equation}
The first equality is clear by definition, and we trivially have $\delta
^{\ast}(t)\geq\delta_{1}^{\ast}(t)$ for every $t>0$, since $S^{t}%
(A,B,L)\subset Q^{t}(A,B,L)$. Let us now show that there exist $C,\bar{t}>0$
such that $\delta^{\ast}(t)-\delta_{1}^{\ast}(t)\leq C$ for every $t\geq
\bar{t}$.

Abbreviate $R(t,\alpha):=R(t,\alpha,A,B,K)$ and pick any $g:=R(t,\alpha)\in
Q^{t}(A,B,K)$ corresponding to $\alpha\in\mathcal{{G}}(T,\mu)$ and $t\geq T$.
We modify $\alpha$ in order to get a periodic $(T,\mu)$-signal $\tilde{\alpha
}\in\mathcal{{G}}(T,\mu)$ as follows:
\[
\tilde{\alpha}(s)=\left\{
\begin{array}
[c]{ccc}%
\alpha(s) & \text{for} & s\in\lbrack0,t-T]\\
1 & \text{for} & s\in(t-T,t)
\end{array}
\right.
\]
and extend $\tilde{\alpha}$ $t$-periodically to $\mathbb{R}$. Notice that
$\tilde{g}:=R(t,\tilde{\alpha})\in S^{t}(A,B,K)$ and%
\[
g=R(T,\alpha(\cdot+t-T))R(t-T,\alpha),\qquad\tilde{g}=e^{T(A+BL)}%
R(t-T,\alpha).
\]
We conclude%
\[
\left\Vert g^{-1}\right\Vert =\left\Vert R(t-T,\alpha)^{-1}R(T,\alpha
(\cdot+t-T))^{-1}\right\Vert \leq\left\Vert e^{T(A+BL)}\right\Vert \left\Vert
\tilde{g}^{-1}\right\Vert \left\Vert R(T,\alpha(\cdot+t-T))^{-1}\right\Vert .
\]
Since $\left\Vert R(T,\alpha(\cdot+t-T))^{-1}\right\Vert $ is bounded,
uniformly with respect to $\alpha$ and $t\geq T$, there is a constant $C>0$
such that $\Vert g^{-1}\Vert\leq C_{0}\Vert\tilde{g}^{-1}\Vert$. Then clearly
\[
\log m(g)=-\log\Vert g^{-1}\Vert\geq-\log\Vert\tilde{g}^{-1}\Vert-C=\log
m(\tilde{g})-C\geq\delta_{1}^{\ast}(t)-C.
\]
Taking the infimum over $g\in Q^{t}(A,B,L)$ one gets equality (\ref{X}).

If $K\in\mathrm{PLARC}(A,B)$ we conclude the proof of the proposition
by showing that
\[
\mathrm{rc}_{\#}(-A,-B,K)\leq-\delta(-A,-B,K).
\]
Combining it with \eqref{deltas}, \eqref{ddee}, \eqref{tobere} this will
prove, as desired,%
\begin{equation}
\mathrm{rd}_{\#}(A,B,K)=\mathrm{rc}_{\#}(-A,-B,K)\leq-\delta(-A,-B,K)=\delta
^{\ast}(A,B,K)\leq\mathrm{rd}(A,B,K). \label{Y}%
\end{equation}
Then equalities hold here. We emphasize that the assumption that
$K\in\mathrm{PLARC}(A,B)$ is only used at this stage of the argument.

For the sake of notational simplicity, let us prove the equivalent inequality
$\mathrm{rc}_{\#}(A,B,K)\leq-\delta(A,B,K)$ (recall that $\mathrm{PLARC}%
(A,B)=\mathrm{PLARC}(-A,-B)$).
Since
$\mathrm{int}C$
is nonempty, there is a basis of $\mathbb{R}^{d}$, say $e_{1},\dots,e_{d}$
with $\Pi e_{1},...,\Pi e_{d}\in\mathrm{int}C$.
Then, clearly, for every $t>0$ and every $g\in S^{t}(A,B,K)$,
\[
\frac{1}{t}\log\Vert g\Vert\leq\frac{1}{t}\log\left(  \max_{i=1,\dots
,d}|ge_{i}|\right)  +\frac{1}{t}\log d.
\]
Then, for every $\varepsilon>0$, there exist
$i\in\{1,\dots,d\}$, $t>0$ arbitrarily large, and $\alpha\in\mathcal{{G}%
}(T,\mu)$ such that $\alpha$ is $t$-periodic and
\[
\frac{1}{t}\log|x(t;0,e_{i},A,B,K,\alpha)|>\delta(A,B,K)-\varepsilon.
\]
Applying Lemma~\ref{l-fun} with $x_{0}=e_{i}$ and $\alpha$, $t$, $\varepsilon$
as above, we obtain that there exists $\alpha_{\#}$ such that $(\alpha
_{\#},e_{i})$ is $\#$-admissible for $A,B,K$ and
\[
\lim_{s\rightarrow+\infty}\frac{1}{s}\log|x(s;0,e_{i},A,B,K,\alpha
_{\#})|>\delta(A,B,K)-2\varepsilon.
\]
We deduce that $\mathrm{rc}_{\#}(A,B,K)\leq-\delta(A,B,K)$, as required.
\end{proof}

\begin{cor}
\label{cooro} If $K$ belongs to $\mathrm{PLARC}(A,B)$ then%
\[
\mathrm{rc}(A,B,K)=\mathrm{rd}(-A,-B,K)=-{\delta(A,B,K)}.
\]

\end{cor}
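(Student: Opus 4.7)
The plan is to chain together Propositions~\ref{29}, \ref{time-reversal}, and \ref{211}, using the fact that $\mathrm{PLARC}(A,B)=\mathrm{PLARC}(-A,-B)$ noted in Section~3, so that $K$ qualifies for each invocation of these propositions in either the forward or the time-reversed direction. No new technical machinery is needed beyond what has already been established; the corollary is essentially a bookkeeping argument on top of the chain of (in)equalities derived in the proof of Proposition~\ref{211}.

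First I would establish the identity $\mathrm{rc}(A,B,K)=\mathrm{rd}(-A,-B,K)$. By Proposition~\ref{29}, $\mathrm{rc}(A,B,K)=\mathrm{rc}_{\#}(A,B,K)$. By Proposition~\ref{time-reversal}, $\mathrm{rc}_{\#}(A,B,K)=\mathrm{rd}_{\#}(-A,-B,K)$. Finally, since $K\in\mathrm{PLARC}(-A,-B)$, Proposition~\ref{211} applied to the pair $(-A,-B)$ gives $\mathrm{rd}_{\#}(-A,-B,K)=\mathrm{rd}(-A,-B,K)$. Concatenating these three equalities yields the first claimed equality.

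Next I would extract the equality with $-\delta(A,B,K)$ from the chain \eqref{Y}, applied with $(A,B)$ replaced by $(-A,-B)$. Since $K\in\mathrm{PLARC}(-A,-B)$, that chain reads
\[
\mathrm{rd}_{\#}(-A,-B,K)=\mathrm{rc}_{\#}(A,B,K)\leq -\delta(A,B,K)=\delta^{\ast}(-A,-B,K)\leq \mathrm{rd}(-A,-B,K).
\]
The previous step already identified the leftmost and rightmost quantities, so all the intermediate inequalities collapse to equalities. In particular $\mathrm{rc}(A,B,K)=\mathrm{rc}_{\#}(A,B,K)=-\delta(A,B,K)$, which completes the proof.

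The only point that requires any care is making sure that the projected Lie algebra rank condition transfers correctly under time reversal: this is exactly the observation $\mathrm{PLARC}(A,B)=\mathrm{PLARC}(-A,-B)$ recorded before Proposition~\ref{time-reversal}, which allows us to invoke Proposition~\ref{211} (whose hypothesis is $K\in\mathrm{PLARC}\text{ of the pair in question}$) on $(-A,-B)$. Beyond that, the corollary is immediate.
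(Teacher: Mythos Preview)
Your proposal is correct and follows essentially the same route as the paper: chain Propositions~\ref{29}, \ref{time-reversal}, and \ref{211} (the latter applied to $(-A,-B)$ via $\mathrm{PLARC}(A,B)=\mathrm{PLARC}(-A,-B)$) to get $\mathrm{rc}(A,B,K)=\mathrm{rd}(-A,-B,K)$, then read off $-\delta(A,B,K)$ from the equalities in \eqref{Y}. Your version is in fact slightly more explicit than the paper's in that you spell out that \eqref{Y} is being invoked with $(A,B)$ replaced by $(-A,-B)$, which is exactly what is needed.
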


\begin{proof}
Since $K\in\mathrm{PLARC}(A,B)$,
\begin{equation}
\mathrm{rc}(A,B,K)=\mathrm{rc}_{\#}(A,B,K)=\mathrm{rd}_{\#}%
(-A,-B,K)=\mathrm{rd}(-A,-B,K), \label{4eq}%
\end{equation}
where the first equality follows from Proposition \ref{29}, the second
equality from Proposition \ref{time-reversal}, and the last equality from
Proposition~\ref{211} applied to the pair $(-A,-B)$.
Since equalities hold in (\ref{Y}), we also have%
\[
\mathrm{rd}(-A,-B,K)=-\delta(A,B,K).
\]

\end{proof}

\begin{remark}
{Using the second equality in Corollary \ref{cooro} and the definition of
}$\delta$, one can also show that {$\mathrm{rc}(A,B,K)$ is equal to the
supremal Bohl exponent
\[
\sup_{\alpha\in\mathcal{G}(T,\mu)}\limsup_{s,t-s\rightarrow+\infty}\frac
{1}{t-s}\log\Vert R(t,s,\alpha,A,B,K)\Vert,
\]
where }$R(t,s,\alpha,A,B,K)$ is the solution of the differential equation in
(\ref{R}) with {initial condition $R(s)=$\textrm{Id}. This is analogous to
\cite[Theorem 7.2.20]{ColKli}.}
\end{remark}

\section{Continuity properties of the growth rates\label{Section4}}

We investigate in this section the continuity properties of the convergence
and divergence rates, i.e. of the maps $(A,B,K)\mapsto\mathrm{rc}(A,B,K)$ and
$(A,B,K)\mapsto\mathrm{rc}(A,B,K)$ defined in (\ref{td0}). This issue can be
restated as the study of the continuity properties of the maximal and minimal
Lyapunov exponents of system (\ref{system1}) with respect to $(A,B,K)$.

{
Denote by $\theta$ the flow
\[
\theta_{t}:\alpha\mapsto\alpha(t+\cdot),\qquad t\in\mathbb{R},
\]
defined on $\mathcal{G}(T,\mu)$. Clearly, the periodic points of the shift are
the periodic $(T,\mu)$-signals. }

\begin{lemma}
The periodic $(T,\mu)$-signals are dense in $\mathcal{{G}}(T,\mu)$.
\end{lemma}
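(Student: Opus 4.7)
The plan is to approximate each $\alpha \in \mathcal{G}(T,\mu)$ in the weak-$\star$ topology by an explicit periodic signal built from $\alpha$ itself. Since weak-$\star$ convergence in $L^{\infty}(\mathbb{R},\mathbb{R})$ is tested against $L^{1}(\mathbb{R})$ and compactly supported functions are dense in $L^{1}(\mathbb{R})$, it suffices to show that for every $R > 0$ one can find a periodic signal $\alpha_p \in \mathcal{G}(T,\mu)$ agreeing with $\alpha$ on $[-R,R]$. Indeed, given such an $\alpha_p$ and any test function $\phi \in L^{1}(\mathbb{R})$, choosing $R$ with $\int_{|t|>R}|\phi| < \varepsilon$ and using $|\alpha_p - \alpha| \leq 1$ pointwise yields $\bigl|\int \phi(\alpha_p - \alpha)\bigr| < \varepsilon$.

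For the construction, fix $N \geq \max(R, 2T)$ and define $\alpha_N$ on the fundamental interval $[-N, N+T]$ by $\alpha_N(t) = \alpha(t)$ for $t \in [-N,N]$ and $\alpha_N(t) = 1$ for $t \in (N, N+T]$, then extend periodically with period $2N+T$. The nontrivial point is to verify $\int_t^{t+T} \alpha_N(s)\,ds \geq \mu$ for every $t$. I would split into three cases for $t$ inside one period: (a) if $[t,t+T] \subseteq [-N,N]$, the bound is immediate from the PE of $\alpha$; (b) if $[t,t+T]$ straddles $N$, rewrite the integral as $\int_t^N \alpha + (t+T-N)$ and combine $\alpha \leq 1$ on $[N, t+T]$ with the PE of $\alpha$ on $[t, t+T]$ to recover $\geq \mu$; (c) if $t \in (N, N+T)$, the window wraps into the next period, so the integral becomes $(N+T-t) + \int_{-N}^{t-N-T} \alpha$, and the condition $N \geq 2T$ ensures $t - T \geq T$, so that $[-N, -N+T] \subseteq [-N, t-N-T]$ and the PE of $\alpha$ applied at $s = -N$ gives $\int_{-N}^{t-N-T} \alpha \geq \mu$.

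I expect case (c) to be the main technical obstacle: the periodic wraparound is precisely what forces the insertion of a buffer of length $T$ where $\alpha_N$ is set to $1$, together with the size constraint $N \geq 2T$, so that a full PE-window of $\alpha$ can be made to fit inside the tail integral. Once this is established for a single $N$, letting $N \to \infty$ along with the weak-$\star$ criterion sketched in the first paragraph gives density of the periodic signals in $\mathcal{G}(T,\mu)$.
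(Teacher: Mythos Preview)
Your approach is essentially the paper's: restrict $\alpha$ to a large symmetric interval, pad with a buffer where the signal equals $1$, extend periodically, and let the interval grow, using density of compactly supported functions in $L^1$ to conclude weak-$\star$ convergence. (The paper uses two buffers of length $T-\mu$ rather than one of length $T$, and simply asserts that the resulting signal lies in $\mathcal G(T,\mu)$ without writing out your case analysis.)

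There is, however, an arithmetic slip in your case~(c) that breaks the argument as stated. With period $2N+T$ and $t\in(N,N+T)$, the window $[t,t+T]$ wraps to $[t,N+T]\cup[-N,\,t+T-(2N+T)]=[t,N+T]\cup[-N,t-2N]$, so the integral equals
\[
(N+T-t)+\int_{-N}^{\,t-2N}\alpha,
\]
with upper limit $t-2N$, not $t-N-T$. Since $t-2N\in(-N,-N+T)$, the full PE-window $[-N,-N+T]$ does \emph{not} fit inside $[-N,t-2N]$, so your concluding inequality cannot be obtained that way. The fix is to argue exactly as in your case~(b): set $a:=N+T-t\in(0,T)$, so $t-2N=-N+T-a$, and use $\alpha\le 1$ together with the PE condition for $\alpha$ at $-N$ to get
\[
\int_{-N}^{-N+T-a}\alpha=\int_{-N}^{-N+T}\alpha-\int_{-N+T-a}^{-N+T}\alpha\ge\mu-a,
\]
whence the total is at least $a+(\mu-a)=\mu$. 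With this correction the proof goes through, and the hypothesis $N\ge 2T$ plays no role.
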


\begin{proof}
Let $\alpha\in\mathcal{{G}}(T,\mu)$. We construct a sequence of periodic
$(T,\mu)$-signals $\alpha_{k}$ weak-$\star$ converging to $\alpha$.
Define
\[
\alpha_{k}(t)=\left\{
\begin{array}
[c]{ccc}%
1 & \text{for} & [-T+\mu-k,-k),\\
\alpha(t) & \text{for} & t\in\lbrack-k,k],\\
1 & \text{for} & t\in(k,,k+T-\mu],
\end{array}
\right.
\]
and extend on $\mathbb{R}$ by $2(k+T-\mu)$-periodicity. Then $\alpha_{k}$
belongs to $\mathcal{{G}}(T,\mu)$.
Take $y\in L^{1}(\mathbb{R},\mathbb{R})$ and let $\varepsilon>0$. There exists
$k_{\varepsilon}\in\mathbb{N}$ such that for all $k\geq k_{\varepsilon}$%
\[
\int_{\mathbb{R}\setminus\lbrack-k,k]}\left\vert y(t)\right\vert
dt<\varepsilon.
\]
Then, for $k>k_{\varepsilon}$,
\[
\left\vert \int_{\mathbb{R}}y(t)\alpha_{k}(t)dt-\int_{\mathbb{R}}%
y(t)\alpha(t)dt\right\vert \leq\int_{\mathbb{R}}\left\vert y(t)\right\vert
\left\vert \alpha_{k}(t)-\alpha(t)\right\vert dt\leq\int_{\mathbb{R}%
\setminus\lbrack-k,k]}\left\vert y(t)\right\vert dt<\varepsilon.
\]
\end{proof}

Since $\mathcal{G}(T,\mu)$ is compact connected metrizable for the
weak-$\star$ topology, the above lemma yields that the flow $\theta$ in the
base $\mathcal{{G}}(T,\mu)$ is chain transitive and the flows $\Phi
=\Phi(A,B,K)$ on $\mathcal{{G}}(T,\mu)\times\mathbb{R}^{d}$ depend
continuously on $(A,B,K)$. Thus the flow $\Phi$ satisfies the assumptions in
\cite[Corollary 5.3.11]{ColKli} and upper semicontinuity of the supremal
spectral growth rates follows. More precisely, for a sequence $(A^{n}%
,B^{n},K^{n})\rightarrow(A,B,K)$ with Lyapunov exponents denoted by
$\lambda^{n}$ one has%
\[
\sup\{\lambda\in\mathbb{R}\left\vert {}\right.  \ \text{there are Lyapunov
exponents }\lambda^{n}\text{ with }\lambda^{n}\rightarrow\lambda\}\leq
\sup_{\alpha\in\mathcal{{G}}(T,\mu),x_{0}\not =0}\lambda^{+}(x_{0}%
,A,B,K,\alpha).
\]
(Here, in addition to \cite[Corollary 5.3.11]{ColKli}, it is used that the
supremum of the Morse spectrum coincides with the supremum over all Lyapunov
exponents, \cite[Theorem 5.1.6]{ColKli}.) \ In the same way, one obtains that
\[
\inf\{\lambda\in\mathbb{R}\left\vert {}\right.  \ \text{there are Lyapunov
exponents }\lambda^{n}\text{ with }\lambda^{n}\rightarrow\lambda\}\geq
\inf_{\alpha\in\mathcal{{G}}(T,\mu),x_{0}\not =0}\lambda^{-}(x_{0},A^{0}%
,B^{0},K^{0},\alpha).
\]
An immediate consequence is that%
\[
\underset{n\rightarrow\infty}{\lim\sup}\sup_{\alpha\in\mathcal{{G}}%
(T,\mu),x_{0}\not =1}\lambda^{+}(x_{0},A^{n},B^{n},K^{n},\alpha)\leq
\sup_{\alpha\in\mathcal{{G}}(T,\mu),x_{0}\not =0}\lambda^{+}(x_{0},A^{0}%
,B^{0},K^{0},\alpha).
\]
In other words, the maximal Lyapunov exponent is upper semicontinuous or,
equivalently, the rate of convergence $\mathrm{rc}(A,B,K)$ is lower
semicontinuous with respect to $(A,B,K)$. Analogously,%
\[
\underset{n\rightarrow\infty}{\lim\inf}\inf_{\alpha\in\mathcal{{G}}%
(T,\mu),x_{0}\not =1}\lambda^{-}(x_{0},A^{n},B^{n},K^{n},\alpha)\geq
\inf_{\alpha\in\mathcal{{G}}(T,\mu),x_{0}\not =1}\lambda^{-}(x_{0},A^{0}%
,B^{0},K^{0},\alpha),
\]
Hence the minimal Lyapunov exponent is lower semicontinuous or, equivalently,
the rate of divergence $\mathrm{rd}(A,B,K)$ is lower semicontinuous with
respect to $(A,B,K)$.

\begin{theorem}
\label{lsc-usc} (i) The functions $\mathrm{rc},\mathrm{rd}:P_{d,m}\times
M_{m,d}(\mathbb{R})\rightarrow\mathbb{R},(A,B,K)\mapsto\mathrm{rc}(A,B,K)$ are
lower semicontinuous.
(ii) The restrictions of $\mathrm{rc}$ and $\mathrm{rd}$ to the set of all
$(A,B,K)$ with
$K\in\mathrm{PLARC}(A,B)$ are also upper semicontinuous, and hence
continuous there.
\end{theorem}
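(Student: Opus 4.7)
Part (i) is already contained in the discussion immediately preceding the theorem statement. Indeed, the newly established density of periodic $(T,\mu)$-signals makes the shift flow $\theta$ on the compact, connected, metrizable space $\mathcal{{G}}(T,\mu)$ chain transitive, and the linear flow $\Phi(A,B,K)$ on $\mathcal{{G}}(T,\mu)\times\mathbb{R}^d$ depends continuously on $(A,B,K)$. Invoking \cite[Corollary~5.3.11]{ColKli} then gives upper semicontinuity of the Morse spectrum in $(A,B,K)$, while \cite[Theorem~5.1.6]{ColKli} identifies its supremum (respectively, infimum) with $\sup_{\alpha,x_0}\lambda^+$ (respectively, $\inf_{\alpha,x_0}\lambda^-$). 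These two readings translate directly into lower semicontinuity of $\mathrm{rc}$ and of $\mathrm{rd}$.

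For part (ii) it is enough to prove upper semicontinuity of $\mathrm{rc}$ on the set of $(A,B,K)$ with $K\in\mathrm{PLARC}(A,B)$, since the analogous statement for $\mathrm{rd}$ follows via the identity $\mathrm{rd}(A,B,K)=\mathrm{rc}(-A,-B,K)$ (a consequence of Corollary~\ref{cooro}) together with $\mathrm{PLARC}(A,B)=\mathrm{PLARC}(-A,-B)$. Note first that $\mathrm{PLARC}$ is an open condition, as it amounts to the nonvanishing of finitely many polynomial expressions in $(A,BK)$ at each point of the compact manifold $\mathbb{RP}^{d-1}$. Fix now $(A_0,B_0,K_0)$ with $K_0\in\mathrm{PLARC}(A_0,B_0)$ and $\varepsilon>0$. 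By Proposition~\ref{29}, $\mathrm{rc}(A_0,B_0,K_0)=\mathrm{rc}_{\#}(A_0,B_0,K_0)$, so one can find a $\#$-admissible pair $(\alpha,x_0)$ for $(A_0,B_0,K_0)$ with $\lambda^+(x_0,A_0,B_0,K_0,\alpha)>-\mathrm{rc}(A_0,B_0,K_0)-\varepsilon$. By the very definition of $\#$-admissibility, $\alpha$ is $\tau$-periodic for some $\tau>0$ and $R(\tau,\alpha,A_0,B_0,K_0)x_0=c_0x_0$ for some $c_0\in\mathbb{R}\setminus\{0\}$, whence $\lambda^+(x_0,A_0,B_0,K_0,\alpha)=\frac{1}{\tau}\log|c_0|$.

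It remains to propagate this estimate to nearby parameters. Continuous dependence of solutions of \eqref{R} on $(A,B,K)$ at fixed $\tau$ and $\alpha$, together with continuity of the spectrum of a matrix as an unordered tuple in $\mathbb{C}$, provide, for every $(A,B,K)$ close to $(A_0,B_0,K_0)$, an eigenvalue $c(A,B,K)$ of $R(\tau,\alpha,A,B,K)$ with $|c(A,B,K)|$ arbitrarily close to $|c_0|$. The real invariant subspace attached to $c(A,B,K)$ (a line if $c(A,B,K)\in\mathbb{R}$, a $2$-plane if not) supports, under the $\tau$-periodic signal $\alpha$, a dynamics whose Lyapunov exponent equals $\frac{1}{\tau}\log|c(A,B,K)|$, since on it the monodromy is similar to $|c(A,B,K)|$ times an orthogonal transformation. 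Hence $-\mathrm{rc}(A,B,K)\geq\frac{1}{\tau}\log|c(A,B,K)|$, so that
\[
\liminf_{(A,B,K)\to(A_0,B_0,K_0)}\bigl(-\mathrm{rc}(A,B,K)\bigr)\geq\frac{1}{\tau}\log|c_0|>-\mathrm{rc}(A_0,B_0,K_0)-\varepsilon,
\]
and letting $\varepsilon\to 0$ yields the desired upper semicontinuity of $\mathrm{rc}$. The principal obstacle in this argument is that continuity of eigenvalues is only multiset-valued, so a small perturbation of the real eigenvalue $c_0$ may send it into a conjugate complex pair; the modulus-based estimate on the $2$-plane just outlined is tailored precisely to absorb this issue.
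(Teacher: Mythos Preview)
Your proof is correct and follows essentially the same route as the paper: part (i) is read off from the preceding discussion, and for part (ii) you invoke Proposition~\ref{29} to produce a $\#$-admissible pair, then track an eigenvalue of the monodromy $R(\tau,\alpha,\cdot)$ under perturbation of $(A,B,K)$ and extract a Lyapunov exponent $\frac{1}{\tau}\log|c|$ from the associated real invariant line or $2$-plane, exactly as the paper does with its $x_\varepsilon^n$ in the generalized real eigenspace; the reduction of $\mathrm{rd}$ to $\mathrm{rc}$ via Corollary~\ref{cooro} is also the paper's argument. Your explicit remark that $\mathrm{PLARC}$ is an open condition and your anticipation of the real-to-complex eigenvalue bifurcation are helpful clarifications, but they do not change the underlying strategy.
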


\begin{proof}
Assertion (i) has been established above. We show upper semicontinuity of
$\mathrm{rc}(A,B,K)$, i.e., lower semicontinuity of the maximal Lyapunov
exponent on $\{(A,B,K)\mid K\in\mathrm{PLARC}(A,B)\}$.

Consider a sequence $(A^{n},B^{n},K^{n})\rightarrow(A^{0},B^{0},K^{0})$. We
have to show that%
\begin{equation}
\underset{n\rightarrow\infty}{\lim\sup~}\mathrm{rc}(A^{n},B^{n},K^{n}%
)\leq\mathrm{rc}(A^{0},B^{0},K^{0}), \label{ass}%
\end{equation}
that is%
\[
\underset{n\rightarrow\infty}{\lim\inf}\sup_{\alpha\in\mathcal{{G}}%
(T,\mu),x_{0}\not =0}\lambda^{+}(x_{0},A^{n},B^{n},K^{n},\alpha)\geq
\sup_{\alpha\in\mathcal{{G}}(T,\mu),x_{0}\not =0}\lambda^{+}(x_{0},A^{0}%
,B^{0},K^{0},\alpha).
\]
Let $\varepsilon>0$. Since $\mathrm{rc}(A^{0},B^{0},K^{0})=\mathrm{rc}%
_{\#}(A^{0},B^{0},K^{0})$, there exists a $\#$-admissible pair $(\alpha
_{\varepsilon},x_{0,\varepsilon})$ such that
\[
\left\vert \mathrm{rc}(A^{0},B^{0},K^{0})-\frac{1}{\tau_{\varepsilon}}%
\log\left\vert \mu_{\varepsilon}\right\vert \right\vert <\varepsilon,
\]
where $\tau_{\varepsilon}$ is the period of the trajectory on $\mathbb{RP}%
^{d-1}$ associated with $\alpha_{\varepsilon}$ and starting from $\Pi
x_{0,\varepsilon}$ and $\mu_{\varepsilon}\in\mathbb{R}$ satisfies
$R_{\varepsilon}(\tau_{\varepsilon})x_{0,\varepsilon}=\mu_{\varepsilon
}x_{0,\varepsilon}$, with $R_{\varepsilon}(\cdot)$ the fundamental matrix%
\[
\dot{R}_{\varepsilon}(t)=(A^{0}+\alpha_{\varepsilon}(t)B^{0}K^{0}%
)R_{\varepsilon}(t),\quad R_{\varepsilon}(0)=\mathrm{Id}_{d}.
\]

Furthermore, recall that eigenvalues depend continuously on the matrix (this
follows, e.g., from \cite[Lemma A.4..1]{Sontag}). For $n\in\mathbb{N}$, let
$R^{n}$ be the fundamental matrix%
\[
\dot{R}^{n}(t)=(A^{n}+\alpha_{\varepsilon}(t)B^{n}K^{n})R^{n}(t),\quad
R^{n}(0)=\mathrm{Id}_{d}.
\]
Then there exists a sequence $(\mu_{\varepsilon}^{n})_{n\in\mathbb{N}}$ in
$\mathbb{C}$ converging to $\mu_{\varepsilon}$ as $n$ tends to infinity such
that $\mu_{\varepsilon}^{n}$ is an eigenvalue of $R^{n}(\tau_{\varepsilon})$
for $n\in\mathbb{N}$. One therefore has for $n$ large enough that
\[
\left\vert \frac{1}{\tau_{\varepsilon}}\log\left\vert \mu_{\varepsilon
}\right\vert -\frac{1}{\tau_{\varepsilon}}\log\left\vert \mu_{\varepsilon}%
^{n}\right\vert \right\vert <\varepsilon.
\]
Furthermore, there is $x_{\varepsilon}^{n}\in\mathbb{S}^{d-1}$ in the
generalized real eigenspace associated with $(\mu_{\varepsilon}^{n}%
,\overline{\mu_{\varepsilon}^{n}})$ such that%
\[
|R^{n}(k\tau_{\varepsilon})x_{\varepsilon}^{n}|=|\mu_{\varepsilon}^{n}%
|^{k},\qquad\mbox{for every }k\in\mathbb{N}.
\]
This implies
\[
\sup_{\alpha\in\mathcal{{G}}(T,\mu),x_{0}\not =0}\lambda^{+}(x_{0},A^{n}%
,B^{n},K^{n},\alpha)\geq\lambda^{+}(x_{\varepsilon}^{n},A^{n},B^{n}%
,K^{n},\alpha_{\varepsilon})=\frac{1}{\tau_{\varepsilon}}\log\left\vert
\mu_{\varepsilon}^{n}\right\vert ,
\]
and hence for $n$ large enough%
\[
\mathrm{rc}(A^{n},B^{n},K^{n})\leq\frac{1}{\tau_{\varepsilon}}\log\left\vert
\mu_{\varepsilon}^{n}\right\vert \leq\frac{1}{\tau_{\varepsilon}}%
\log\left\vert \mu_{\varepsilon}\right\vert +\varepsilon\leq\mathrm{rc}%
(A^{0},B^{0},K^{0})+2\varepsilon.
\]
Since $\varepsilon$ is arbitrary, assertion (\ref{ass}) follows.

Finally, upper semicontinuity of $\mathrm{rd}(A,B,K)$ is a consequence of
Corollary\textbf{ }\ref{cooro}.
\end{proof}

\section{Properties of maximal growth rates}

\label{s-5}
Before stating the main result of the paper, we need the following proposition.


\begin{prop}\label{last1}
\label{ptrivial}
Let $(A,B)\in P_{d,m}.$ 
The following statements are equivalent:
\begin{description}
\item[$(i)$] $\mathrm{PLARC}(A,B)$ is nonempty;
\item[$(ii)$] $\mathrm{LARC}_0(A,B)$ is nonempty;
\item[$(iii)$] $\mathrm{LARC}_0(A,B)$  is dense in $M_{m,d}(\mathbb{R})$;
\item[$(iv)$] $\mathrm{PLARC}(A,B)$ is dense in $M_{m,d}(\mathbb{R})$.
 \end{description}
Moreover, 
if $d\geq 3$, then the above statements are also equivalent to the following ones
\item[$(v)$] $\mathrm{LARC}(A,B)$ is nonempty;
\item[$(vi)$] $\mathrm{LARC}(A,B)$ is dense in $M_{m,d}(\mathbb{R})$.
\end{prop}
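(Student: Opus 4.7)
The plan is to decouple the six equivalences into three groups: the ``nonempty iff dense'' equivalences within each of the three sets, the easy inclusions coming from Lemma \ref{proj}, and one difficult algebraic implication (plus a bookkeeping step for the (v), (vi) equivalences).

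First, I will observe that each of the sets $\mathrm{PLARC}(A,B)$, $\mathrm{LARC}_0(A,B)$ and $\mathrm{LARC}(A,B)$ is defined by requiring that a finite collection of iterated Lie brackets, polynomial in the entries of $K$, attain maximal span. This is a Zariski-open condition on $K \in M_{m,d}(\mathbb{R})$, so each of these sets is either empty or Zariski-open and dense in $M_{m,d}(\mathbb{R})$. This yields at once the three equivalences (i) $\Leftrightarrow$ (iv), (ii) $\Leftrightarrow$ (iii), and (v) $\Leftrightarrow$ (vi). Combined with the inclusions $\mathrm{LARC}(A,B) \subseteq \mathrm{LARC}_0(A,B) \subseteq \mathrm{PLARC}(A,B)$ furnished by Lemma \ref{proj}, the directions (v) $\Rightarrow$ (ii) $\Rightarrow$ (i) are settled for free. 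What remains to prove is (i) $\Rightarrow$ (ii) in general, together with (ii) $\Rightarrow$ (v) in the case $d \geq 3$.

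Next, for (ii) $\Rightarrow$ (v) (when $d \geq 3$), I will note that since scalar matrices are central, $\mathrm{Lie}(A, BK) = \mathrm{Lie}(A_0, (BK)_0) + \mathbb{R} A + \mathbb{R}(BK)$, and this equals $\mathrm{gl}(d,\mathbb{R})$ exactly when $\mathrm{Lie}(A_0,(BK)_0) = \mathrm{sl}(d,\mathbb{R})$ together with the requirement that $\Tr(A) \neq 0$ or $\Tr(BK) \neq 0$. If $\Tr(A) \neq 0$ any $K \in \mathrm{LARC}_0$ works; otherwise, nonemptiness of $\mathrm{LARC}_0(A,B)$ forces $B \neq 0$ (because $\mathrm{Lie}(A_0, 0) = \mathbb{R} A_0$ has dimension at most $1 < d^2 - 1$), so the linear form $K \mapsto \Tr(BK)$ is nontrivial and its nonvanishing set is Zariski-open and dense, meeting the Zariski-open dense set $\mathrm{LARC}_0(A,B)$.

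The heart of the argument, and the main obstacle, is (i) $\Rightarrow$ (ii). Starting from $K_0 \in \mathrm{PLARC}(A,B)$, Remark \ref{irre} tells us that the system group $G^0_{K_0} \subseteq SL(d,\mathbb{R})$ acts transitively on $\mathbb{RP}^{d-1}$, so its Lie algebra $\mathfrak{g}(K_0) := \mathrm{Lie}(A_0, (BK_0)_0) \subseteq \mathrm{sl}(d,\mathbb{R})$ is in particular irreducible on $\mathbb{R}^d$. The idea is to invoke the classification of proper maximal irreducible real Lie subalgebras of $\mathrm{sl}(d, \mathbb{R})$ (the algebraic input contributed by the papers \cite{amayo,kissin,dirr-helmke,volklein} credited in the acknowledgments): any such subalgebra sits in a finite list of distinguished subalgebras, each stabilising some additional structure on $\mathbb{R}^d$ (a non-degenerate bilinear form, a complex or quaternionic structure, etc.). Assuming for contradiction that $\mathfrak{g}(K) \subsetneq \mathrm{sl}(d,\mathbb{R})$ for every $K$, the lower semi-continuity of $K \mapsto \dim \mathfrak{g}(K)$ together with the discreteness of the classification pins the entire family of Lie algebras inside a single maximal irreducible $\mathfrak{h}$. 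Then $(BK)_0 \in \mathfrak{h}$ for every $K$, confining the affine subspace $\{BK : K \in M_{m,d}(\mathbb{R})\}$ inside $\mathfrak{h} + \mathbb{R}\mathrm{Id}$. A rank/dimension count---using that this affine subspace contains matrices of many ranks up to $\min(d,m)$---contradicts the rigidity imposed by $\mathfrak{h}$, producing the desired $K \in \mathrm{LARC}_0(A,B)$. The delicate part is precisely this rigidity step: bridging ``some $K$ produces an irreducible Lie algebra'' to ``some (possibly different) $K$ produces all of $\mathrm{sl}(d,\mathbb{R})$'' requires careful deployment of the classification together with an algebraic perturbation argument.
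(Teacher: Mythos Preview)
Your Zariski-openness packaging of the ``nonempty $\Leftrightarrow$ dense'' equivalences is fine and matches the paper's analyticity argument for $(ii)\Rightarrow(iii)$.

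Your route for $(ii)\Rightarrow(v)$ is actually more elementary than the paper's. The paper argues that $\mathrm{Lie}(A,BK)$ has codimension at most one in $\mathrm{gl}(d,\mathbb R)$ and then invokes Amayo's theorem that for $d\geq 3$ the only codimension-one Lie subalgebra of $\mathrm{gl}(d,\mathbb R)$ is $\mathrm{sl}(d,\mathbb R)$. Your shortcut works, but not for the reason you state: the identity $\mathrm{Lie}(A,BK)=\mathrm{Lie}(A_0,(BK)_0)+\mathbb R A+\mathbb R(BK)$ is false in general (take $A$ with nonzero trace and $BK=0$). What is true is that the iterated brackets of $A,BK$ of length $\geq 2$ coincide with those of $A_0,(BK)_0$, and when $\mathrm{Lie}(A_0,(BK)_0)=\mathrm{sl}(d,\mathbb R)$ these brackets already span the derived algebra $[\mathrm{sl}(d,\mathbb R),\mathrm{sl}(d,\mathbb R)]=\mathrm{sl}(d,\mathbb R)$. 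Hence $\mathrm{Lie}(A,BK)\supseteq\mathrm{sl}(d,\mathbb R)$, and the trace argument finishes. State it that way and you avoid Amayo entirely.

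For $(i)\Rightarrow(ii)$, however, your contradiction scheme has a real gap. The classification of transitive subalgebras is only up to conjugacy, and the conjugating matrix may vary with $K$; lower semicontinuity of $\dim\mathfrak g(K)$ does not pin down a \emph{fixed} maximal $\mathfrak h$ containing all the $\mathfrak g(K)$ simultaneously. Without that, you cannot conclude that the linear space $\{(BK)_0:K\in M_{m,d}\}$ lies in a single proper subalgebra, and the subsequent ``rank/dimension count'' has nothing to bite on. The paper sidesteps this entirely by working with conjugation-invariant spectral conditions: every element of (a conjugate of) $\mathrm{so}(d)$ has purely imaginary spectrum, and every element of (a conjugate of) $\mathrm{spin}(9,1)$ has spectrum symmetric under $\lambda\mapsto-\lambda$ with multiplicities. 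It then explicitly constructs, in the open neighbourhood $V_0\subset\mathrm{PLARC}(A,B)$ of $K^\star$, a feedback $K$ for which $(BK)_0$ violates both spectral constraints (case-splitting on $m=d$, $m=d-1$, $m\leq d-2$). This directly forces $\mathfrak g(K)=\mathrm{sl}(d,\mathbb R)$ without ever needing a single common $\mathfrak h$. You should replace your contradiction argument with a construction of this kind.
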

\begin{proof}
Notice that the implication $(iii)\Rightarrow (iv)$ follows directly from Lemma~\ref{proj}, while $(iv)\Rightarrow (i)$
 is trivial.

We next prove that $(i)$ implies $(ii)$. Let $K^\star$ be in $\mathrm{PLARC}(A,B)$. Since $\RP^{d-1}$ is compact, there exists an open neighborhood $V_0$ of $K^\star$ contained in $\mathrm{PLARC}(A,B)$. 
For every $K\in V_0$, let $G_K^0$ and $L_K^0$ be, respectively, the group and the Lie algebra generated by $A-(\Tr(A)/d)\Id_d$ and $BK-(\Tr(BK)/d)\Id_d$. 
As noticed in Remark~\ref{irre}, $G_K^0$ acts transitively on $\RP^{d-1}$, which implies that $L^0_K$ belongs, up to similarity, to the list given \cite[Theorem 19]{dirr-helmke} (based on results given in \cite{volklein}).
To close the argument, it is enough to find $K\in V_0$ such that no matrix similar to $BK-(\Tr(BK)/d)\Id_d$ belongs to $\mbox{so}(d)$ nor $\mbox{spin}(9,1)$. 
Notice that the spectrum of any matrix similar to an element of $\mbox{so}(d)$ is contained in the imaginary 
axis, while the eigenvalues of a matrix similar to an element of $\mbox{spin}(9,1)$ are symmetric with respect to the origin, in the sense that if 
$\lambda$ is an eigenvalue then $-\lambda$ is as well, with the same algebraic multiplicity as $\lambda$ (a proof of this fact is provided in Appendix). It is therefore enough to find $K\in V_0$ and $\lambda$ nonzero so that 
$\lambda$ is an eigenvalue of
$BK-(\Tr(BK)/d)\Id_d$ and either $-\lambda$ is not or its multiplicity is different from the one of $\lambda$.

With no loss of generality, we assume that $B=\begin{pmatrix} \Id_m\\ 0\end{pmatrix}$ and thus 
$$F(K):=BK-(\Tr(BK)/d)\Id_d=\begin{pmatrix} K_1-(\Tr(K_1)/d)\Id_m&K_2\\ 0& -(\Tr(K_1)/d)\Id_{d-m}\end{pmatrix},$$ where $K=\begin{pmatrix} K_1& K_2\end{pmatrix}$ is an arbitrary matrix of $M_{m,d}(\mathbb{R})$.

If $m=d$, then clearly $\{F(K)\mid K\in V_0\}$ is an open nonempty subset of $\mbox{sl}(d,\R)$
and one concludes.

If $m=d-1$, set $K_t=K^\star+t\diag(\Id_{d-1},-(d-1))$ for $t\in \R$. Clearly $K_t\in V_0$ for $t$ small enough. 
For $t\ne 0$ small enough, 
$-(\Tr(K^\star_1)/d)-(d-1)t$ is a nonzero real eigenvalue of $F(K_t)$ and the other eigenvalues are of the type $\lambda-\Tr(K^\star_1)/d+t/d$ with $\lambda$ eigenvalue of $K_1^\star$. 
Then, if  
$\Tr(K^\star_1)/d+(d-1)t$ is an eigenvalue of $F(K_t)$, 
then there exists $\lambda$ eigenvalue of $K_1^\star$ so that
$$ \lambda-2 \frac{\Tr(K^\star_1)}d=\left(d-1-\frac 1d\right)t.$$
Hence, for $t\ne 0$ small enough, 
$\Tr(K^\star_1)/d+(d-1)t$ cannot be an eigenvalue of $F(K_t)$ and 
we are done. 

%
%
%
%
%
%

Assume now that $m\leq d-2$. Then there exists $K= \begin{pmatrix} K_1& K_2\end{pmatrix}\in V_0$ such that $k_1:= -\Tr(K_1)/d\ne 0$ and
the eigenvalues of $K_1-(\Tr(K_1)/d)\Id_m$ and $k_1$ are two by two distinct. As a consequence, $k_1$ is a nonzero eigenvalue of $F(K)$ of multiplicity at least two and the multiplicity of $-k_1$ as  an eigenvalue of $F(K)$ 
is at most one. As noticed above, this allows to conclude that any matrix similar to $F(K)$ does not belong to $\mbox{so}(d)$ if $d\ne 10$ and to the union of $\mbox{so}(10)$  and $\mbox{spin}(9,1)$ if $d=10$.

We conclude the proof of the first part of the statement 
 by showing that $(ii)$ implies $(iii)$.
Assume that there exists $K_{0}\in
M_{m,d}(\mathbb{R})$ such that $\mathrm{Lie}(A-(\Tr(A)/d)\Id_d,B K_{0}-(\Tr(B K_{0})/d)\Id_d)=\mbox{sl}(d,\mathbb{R})$.
Let us select a basis of $\mbox{sl}(d,\mathbb{R})$ made of iterated Lie brackets of
$A-(\Tr(A)/d)\Id_d$ and $B K_{0}-(\Tr(B K_{0})/d)\Id_d$, denoted by $Q_{1}(K_{0}),\dots,Q_{d^{2}-1}(K_{0})$.

For every $K\in M_{m,d}(\mathbb{R})$ and $j=1,\dots,d^{2}-1$, denote by
$Q_{j}(K)$ the iterated Lie bracket of $A-(\Tr(A)/d)\Id_d$ and $B K-(\Tr(B K)/d)\Id_d$ obtained by replacing
$K_{0}$ by $K$ in the Lie bracket expression of $Q_{j}(K_{0})$.

Consider each $Q_{j}(K)$ as a row vector of $\mathbb{R}^{d^{2}-1}$ (using, for instance, the representation on the basis $Q_{1}(K_{0}),\dots,Q_{d^{2}-1}(K_{0})$) and define
$f(K)=\det(Q_{1}(K),\dots,Q_{d^{2}-1}(K))$. Since $f$ is an analytic function of
the entries of $K$ and $f(K_{0})\ne0$, we deduce that $f$ cannot vanish on any
nonempty open subset of $M_{m,d}(\mathbb{R})$.
This proves the density of $\mathrm{LARC}_0(A,B)$.

Let us now prove that $(iv)$ implies $(vi)$ when $d\geq 3$. 
The proof of the proposition is then concluded by noticing that $(vi)$ trivially implies $(v)$, which implies $(i)$ by Lemma~\ref{proj}.
Let  $K\in \LARC_0(A,B)$. Notice in particular that $B\ne 0$. 
By a perturbative argument, one gets that
$$
\mathrm{Lie}(A-(\Tr(A)/d)\Id_d,B\tilde K-(\Tr(B\tilde K)/d)\Id_d)=\mbox{sl}(d,\R),
$$
for every $\tilde K$ in a neighborhood of $K$, 
and thus we may assume that $\Tr(BK)\ne 0$. 
We now show that 
$L_K=\mathrm{Lie}(A,BK)=M_n(\R)$. Notice that the map $\Phi:M\mapsto M-(\Tr(M)/d)\Id_d$ is surjective from $L_K$ to $\mbox{sl}(d,\R)$. Indeed, by hypothesis $\mbox{sl}(d,\R)=\Lie(\Phi(A),\Phi(BK))$ and $\Phi:L_K\to \mbox{sl}(d,\R)$ is a Lie algebra homomorphism. Hence, $L_K$ has codimension at most $1$ in $M_d(\R)$.
Using the fact that $d\geq 3$ and a theorem of Amayo (see \cite{amayo} and also \cite[Theorem 1.1]{kissin}), one gets that the only Lie subalgebra of $M_d(\R)$ of codimension $1$ is $\mbox{sl}(d,\R)$. 
Since $\Tr(BK)\ne0$ then $L_K\ne \mbox{sl}(d,\R)$ and therefore $L_K$ must be equal to $M_d(\R)$.
\end{proof}

%

\begin{remark}
The hypothesis $d\geq 3$ is essential in the proof of the implication $(iv)\Rightarrow (vi)$, where the fact that $\mbox{sl}(d,\R)$ is simple for $d\geq 3$ is crucial. 
Indeed, the latter is not true when $d=2$ and the  implication $(iv)\Rightarrow (vi)$ is not true, as illustrated by Proposition~\ref{Proposition_cont} below.
\end{remark}
In the context of control theory, it seems reasonable to address the issue of
a possible relationship between nonemptiness of $\mathrm{LARC}(A,B)$ (or
$\mathrm{PLARC}(A,B)$) and controllability of the pair $(A,B)$.

More precisely, is it true that if $\mathrm{PLARC}(A,B)$ is nonempty then
$(A,B)$ is controllable? The next proposition shows that the answer is yes,
except in trivial situations.

\begin{proposition}
\label{Proposition_cont}Let $(A,B)\in P_{d,m}$ be a non-controllable pair such
that $\mathrm{PLARC}(A,B)$ is nonempty. Then $B=0$,
$d=2$ and the eigenvalues of $A$ are non-real.
\end{proposition}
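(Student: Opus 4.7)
The plan is to exploit the irreducibility consequence of $K\in\mathrm{PLARC}(A,B)$ recalled in Remark~\ref{irre}, together with the standard controllability decomposition, to first force $B=0$, and then analyze the degenerate case $B=0$ directly on $\mathbb{RP}^{d-1}$.

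First I would pick any $K\in\mathrm{PLARC}(A,B)$ and recall from Remark~\ref{irre} that the Lie algebra $\mathrm{Lie}(A,BK)$ is then irreducible, i.e.\ admits no proper nontrivial invariant subspace of $\mathbb{R}^{d}$. Now introduce the controllable subspace
\[
\mathcal{C}=\mathrm{span}\{A^{j}B v\mid j\geq 0,\ v\in\mathbb{R}^{m}\}.
\]
This subspace is $A$-invariant by construction and contains $\mathrm{Im}(B)$; hence it is stable under $BK$ as well (since $BK\mathbb{R}^{d}\subseteq\mathrm{Im}(B)\subseteq\mathcal{C}$), and therefore stable under the whole Lie algebra $\mathrm{Lie}(A,BK)$. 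Irreducibility then forces either $\mathcal{C}=\{0\}$ or $\mathcal{C}=\mathbb{R}^{d}$. Since by assumption $(A,B)$ is not controllable, we have $\mathcal{C}\subsetneq\mathbb{R}^{d}$, which leaves only $\mathcal{C}=\{0\}$, i.e.\ $B=0$.

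With $B=0$, the condition $K\in\mathrm{PLARC}(A,B)$ reduces to the requirement that $\{\Pi A\}$ satisfy the Lie algebra rank condition on $\mathbb{RP}^{d-1}$, because the second generator $\Pi(BK)=0$ contributes nothing. Since the Lie algebra generated by a single vector field at a point $q$ is contained in $\mathbb{R}\,\Pi A(q)$, this distribution has dimension at most $1$ everywhere; hence the projected Lie algebra rank condition can hold only if $\dim \mathbb{RP}^{d-1}=1$, that is $d=2$, and additionally $\Pi A(q)\neq 0$ for every $q\in\mathbb{RP}^{1}$.

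Finally I would translate the condition $\Pi A(q)\neq 0$ for every $q\in\mathbb{RP}^{1}$ into a spectral statement on $A$. By the definition of $\Pi A$, we have $\Pi A(\Pi x)=0$ if and only if $Ax$ is collinear to $x$, i.e.\ $x$ is a real eigenvector of $A$. Thus $\Pi A$ is nowhere vanishing on $\mathbb{RP}^{1}$ if and only if $A$ has no real eigenvector, which in dimension $2$ is equivalent to $A$ having a pair of non-real complex conjugate eigenvalues. This yields the three asserted conclusions $B=0$, $d=2$, and $\sigma(A)\not\subset\mathbb{R}$. No real obstacle is anticipated: the only subtle point is the appeal to Remark~\ref{irre} to turn the projected Lie algebra rank condition into the purely algebraic irreducibility statement used in the first step.
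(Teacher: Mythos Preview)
Your proof is correct and follows essentially the same approach as the paper: both invoke the irreducibility consequence of Remark~\ref{irre}, use the controllable subspace (the paper via an explicit controllability decomposition, you directly via $\mathcal{C}$) as a proper invariant subspace to force $B=0$, and then argue that a single projected vector field can satisfy the rank condition on $\mathbb{RP}^{d-1}$ only if $d=2$ and $\Pi A$ is nowhere vanishing. Your coordinate-free treatment of the first step and your explicit translation of ``$\Pi A$ nowhere vanishing'' into ``no real eigenvector'' are slightly more detailed than the paper's version, but the underlying argument is the same.
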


\begin{proof}
Consider a controllability decomposition of the pair $(A,B)$ of the form
\[
A=%
\begin{pmatrix}
A_{1} & A_{2}\\
0 & A_{3}%
\end{pmatrix}
,\quad B=%
\begin{pmatrix}
B_{1}\\
0
\end{pmatrix}
,
\]
where $(A_{1},B_{1})\in P_{r,m}$ is controllable with controllability index
$r<d$ and $A_{3}\in M_{d-r}(\mathbb{R})$.

Let $K\in\mathrm{PLARC}(A,B)$. It is clear that every matrix $C$ in the Lie
algebra $\mathrm{Lie}(A,BK)$ is of the form
\[
C=%
\begin{pmatrix}
C_{1} & C_{2}\\
0 & C_{3}%
\end{pmatrix}
,
\]
with $C_{1}\in M_{r}(\mathbb{R})$, $C_{3}\in M_{d-r}(\mathbb{R})$.

If $r>0$ then the Lie algebra $\mathrm{Lie}(A,BK)$ is reducible, contradicting
$K\in\mathrm{PLARC}(A,B)$, as noticed in Remark~\ref{irre}. This proves that
$r=0$, hence $B=0$.

Then the tangent space to any orbit of $\dot q=(\Pi A) q$, $q\in
\mathbb{RP}^{d-1}$, is of dimension at most one, implying that $d=2$. 
Then $A$ does not have an
invariant space of dimension one, hence the result.

\end{proof}

{\color{blue} }


%

Thanks to the results of the previous sections, we are ready to state the main
result of the paper, equality between maximal growth rates for PE systems.

\begin{theorem}
\label{5p1} Let $(A,B)\in P_{d,m}$ and assume that $\mathrm{LARC_0}(A,B)$ is
{
nonempty}.
Then for persistently excited systems of the form (\ref{system1})
the maximal rate of convergence defined in (\ref{tdd}) and the maximal rate of
divergence defined in (\ref{tDD}) satisfy%
\[
\mathrm{RC}(A,B)
=\mathrm{RD}(-A,-B).
\]

\end{theorem}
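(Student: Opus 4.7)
My plan is to combine three ingredients that have already been assembled in the preceding sections, after which the theorem follows by chaining equalities.

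First, I would invoke Proposition~\ref{ptrivial} to convert the hypothesis that $\LARC_0(A,B)$ is nonempty into the stronger statement that $\PLARC(A,B)$ is dense in $M_{m,d}(\mathbb R)$. As noted in the discussion preceding Proposition~\ref{time-reversal}, the time-reversed system has the same accessibility structure, so $\PLARC(A,B)=\PLARC(-A,-B)$; hence the same dense subset of feedbacks is available for both pairs $(A,B)$ and $(-A,-B)$.

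Second, I would reduce the suprema defining $\RC(A,B)$ and $\RD(-A,-B)$ to suprema over $\PLARC(A,B)$ by exploiting lower semicontinuity of $\rc$ and $\rd$ on $P_{d,m}\times M_{m,d}(\mathbb R)$ established in Theorem~\ref{lsc-usc}(i). Given any $K_0\in M_{m,d}(\mathbb R)$, use density to pick $K_n\in\PLARC(A,B)$ with $K_n\to K_0$; lower semicontinuity then gives
\[
\rc(A,B,K_0)\ \leq\ \liminf_{n\to\infty}\rc(A,B,K_n)\ \leq\ \sup_{K\in\PLARC(A,B)}\rc(A,B,K),
\]
and taking the supremum over $K_0$ yields $\RC(A,B)=\sup_{K\in\PLARC(A,B)}\rc(A,B,K)$. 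Repeating the same argument for the rate $\rd$ applied to the pair $(-A,-B)$, and using $\PLARC(-A,-B)=\PLARC(A,B)$, gives $\RD(-A,-B)=\sup_{K\in\PLARC(A,B)}\rd(-A,-B,K)$.

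Third, I would invoke Corollary~\ref{cooro}: for every $K\in\PLARC(A,B)$ one has $\rc(A,B,K)=\rd(-A,-B,K)$. Taking the supremum of both sides over $K\in\PLARC(A,B)$ and combining with the two identities from the previous step concludes the argument.

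The real work has already been carried out in Sections~\ref{Section3} and \ref{Section4} — namely, the $\#$-admissible periodic approximation (Lemma~\ref{l-fun} and Proposition~\ref{29}), the conorm/time-reversal identification of divergence rates (Propositions~\ref{time-reversal} and \ref{211}), and the semicontinuity of the rates (Theorem~\ref{lsc-usc}) — so no genuinely new obstacle is expected here. The only point that deserves attention is that lower semicontinuity is exactly the right type of regularity for pushing the supremum from the dense set $\PLARC(A,B)$ to the full feedback space; a continuity result alone, such as the one on $\PLARC(A,B)$ given by Theorem~\ref{lsc-usc}(ii), would not suffice, which is why both density (from Proposition~\ref{ptrivial}) and semicontinuity on the whole space (from Theorem~\ref{lsc-usc}(i)) are needed together.
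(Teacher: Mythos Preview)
Your proposal is correct and follows essentially the same approach as the paper's own proof: both use Proposition~\ref{ptrivial} to obtain density of $\PLARC(A,B)$, lower semicontinuity of $\rc$ and $\rd$ (Theorem~\ref{lsc-usc}(i)) to reduce the suprema to $\PLARC(A,B)$, and the identity $\rc(A,B,K)=\rd(-A,-B,K)$ on $\PLARC(A,B)$ to conclude. The only cosmetic difference is that the paper spells out the chain through $\rc_\#$ and $\rd_\#$ via Propositions~\ref{29}, \ref{time-reversal}, and \ref{211}, whereas you invoke the packaged form given in Corollary~\ref{cooro}.
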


\begin{proof}
According to Proposition \ref{ptrivial}, one can assume that $\mathrm{PLARC}(A,B)$  is non empty. By definition of $\mathrm{RC}(A,B)$, for every $\xi<\mathrm{RC}(A,B)$ there
exists $K_{\xi}\in M_{d,m}(\mathbb{R})$ such that $\mathrm{rc}(A,B,K_{\xi
})>\xi$. 
According to Proposition \ref{last1} given below, the nonemptiness of $\mathrm{PLARC}(A,B)$ is equivalent to its density in $M_{m,d}(\mathbb{R})$. 
As a consequence, there
exists a sequence $(K_{\xi}^{n})_{n\in\mathbb{N}}$ in $\mathrm{PLARC}(A,B)$
converging to $K_{\xi}$. Theorem~\ref{lsc-usc}(i)) shows lower semicontinuity
of $\mathrm{rc}(A,B,\cdot)$ on $M_{d,m}(\mathbb{R})$ and hence $\mathrm{rc}%
(A,B,K_{\xi}^{n})>\xi$ for $n$ large enough. Hence
\[
\mathrm{RC}(A,B)=\sup_{K\in\mathrm{PLARC}(A,B)}\mathrm{rc}(A,B,K)=\sup
_{K\in\mathrm{PLARC}(A,B)}\mathrm{rc}_{\#}(A,B,K),
\]
where the last equality follows from Proposition~\ref{29}.
Proposition~\ref{time-reversal} then implies that $\mathrm{RC}(A,B)=\sup
_{K\in\mathrm{PLARC}(A,B)}\mathrm{rd}_{\#}(-A,-B,K)$

Now by the lower semicontinuity of $\mathrm{rd}(A,B,\cdot)$ on $M_{m,d}%
(\mathbb{R})$ we obtain as claimed%
\begin{align*}
\sup_{K\in\mathrm{PLARC}(A,B)}\mathrm{rd}_{\#}(-A,-B,K)  &  =\sup
_{K\in\mathrm{PLARC}(A,B)}\mathrm{rd}(-A,-B,K)\\
&  =\sup_{K\in M_{m,d}(\mathbb{R})}\mathrm{rd}(-A,-B,K)=\mathrm{RD}%
(-A,-B)\text{.}%
\end{align*}

\end{proof}

\begin{remark}
According to Proposition \ref{ptrivial}, if one assumes that $d\geq 3$, then the nonemptiness hypothesis  on $\LARC_0(A,B)$ can be weakened to the nonemptiness of $\LARC(A,B)$.
\end{remark}

\section{The single-input case\label{Section5}}

In this section we assume $m=1$ and we use $b$ to denote the $d\times1$ matrix
$B$.
Let, moreover, $(e_{1},\dots,e_{d})$ be the canonical basis of $\mathbb{R}%
^{d}$.

\subsection{Conditions for $K$ to belong to $\mathrm{PLARC}(A,b)$}

Given a controllable pair $(A,b)\in P_{d,1}$, let $v(A,b)$ and $P(A,b)$ be,
respectively, the unique row vector in $M_{1, d}(\mathbb{R})$ and the unique
invertible matrix so that $(J_{d}+e_{d} v(A,b),e_{d})$ is the controllability
form of $(A-\mathrm{Tr}(A)\mathrm{Id},b)$ (cf. \cite{Sontag}) and $P(A,b)$ is
the corresponding change of coordinates, i.e.,
\[
P(A,b)^{-1}(J_{n}+e_{d} v(A,b))P(A,b)=A-\mathrm{Tr}(A)\mathrm{Id},\qquad
P(A,b)^{-1}e_{d}=b.
\]
Note that $v(A,b) e_{d}=0$ by construction.

We now provide the main result of the section, which ensures that
$\mathrm{PLARC}(A,b)$ is dense if the pair $(A,b)$ is controllable.

\begin{thm}
\label{th-rc} \label{corri} Let $(A,b)\in P_{d,1}$ be a controllable pair.
There exists $c>0$ such that, for every $K\in M_{1,d}(\mathbb{R})$ for which
the eigenvalues of $A+bK$ have either all real part smaller than $-c$ or all
real part larger than $c$, it follows that $
K\in\mathrm{LARC}(A-\mathrm{Tr}(A)\mathrm{Id},b)$. In particular,
$\mathrm{PLARC}(A,b)$ is dense in $M_{1,d}(\mathbb{R})$.
\end{thm}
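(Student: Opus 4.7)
The plan is to work in the controllability form of $(A,b)$. Via the similarity $P(A,b)$, the pair $(A - \Tr(A)\Id, b)$ is brought to $(J_d + e_d v, e_d)$, where $v = v(A,b)$ satisfies $v e_d = 0$. Setting $K' := K P(A,b)^{-1}$, one has in the new coordinates $bK \leftrightarrow e_d K'$ and $A + bK \leftrightarrow J_d + e_d L + \Tr(A)\Id$ with $L := v + K'$. Because $J_d + e_d L$ is in companion form, the entries of $L$ are, up to sign, the elementary symmetric polynomials of the shifted eigenvalues $\lambda_i - \Tr(A)$ of $A + bK$. The hypothesis that all eigenvalues of $A+bK$ have real part smaller than $-c$ therefore translates, via Vi\`ete's formulas, into the estimate that each $|L_i|$ is bounded below by a positive constant times $c^i$ for $c$ large, with a prescribed sign.

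The core step is to show $\Lie(J_d + e_d v, e_d K') = M_d(\R)$ whenever $L$ satisfies this growth law. My proposal is to introduce the rescaling $L_i = c^i \tilde L_i$, so that $\tilde L$ ranges in a compact subset of $\R^d$ bounded away from degenerate configurations. A carefully chosen family of $d^2$ iterated brackets of the two generators $J_d + e_d v$ and $e_d(L - v)$ produces $d^2$ matrices whose coordinates in a fixed basis of $M_d(\R)$ form a square matrix whose determinant $f(L)$ is a polynomial in the entries of $L$. After the rescaling, $f(L) = c^N \Phi(\tilde L) + O(c^{N-1})$ for some integer $N$, where $\Phi(\tilde L)$ is a universal expression coming from the top-order terms of the iterated brackets and independent of $v$. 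For $c$ sufficiently large, $f(L) \ne 0$ provided $\Phi(\tilde L) \ne 0$; this yields $K \in \LARC(A-\Tr(A)\Id, b)$.

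The main obstacle is the nonvanishing of the leading term $\Phi(\tilde L)$ on the admissible range of $\tilde L$. I would address this by reducing to a universal statement about the Lie algebra generated by the companion matrix $J_d$ and the rank-one matrix $e_d \tilde L$, and then verifying it via a test case such as the confluent situation where all eigenvalues of $A+bK$ coalesce at $-c$, which gives the explicit value $\tilde L_i = (-1)^i \binom{d}{i}$. This allows an explicit (possibly inductive on $d$) computation of the iterated brackets to show they indeed span $M_d(\R)$. The symmetric case, where all eigenvalues of $A+bK$ lie to the right of $c$, reduces to the case of real parts less than $-c$ by the substitution $(A,b,K)\mapsto(-A,b,-K)$, which flips the spectrum while preserving the $\LARC$ condition.

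Once some $K \in \LARC(A - \Tr(A)\Id, b)$ has been produced, density of $\PLARC(A,b)$ follows immediately: the inclusions in Lemma~\ref{proj} yield $K \in \LARC_0(A,b) \subseteq \PLARC(A,b)$, so $\PLARC(A,b)$ is nonempty, and Proposition~\ref{ptrivial} then gives density of $\PLARC(A,b)$ in $M_{1,d}(\R)$.
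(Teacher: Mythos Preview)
Your reduction to controllability form and the idea of exploiting the growth of the characteristic-polynomial coefficients are correct and align with the paper's strategy. The perturbation scheme you outline, however, has two genuine gaps.

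First, the rescaled vector $\tilde L$ with $\tilde L_i = L_i/c^i$ does \emph{not} range in a compact set. The hypothesis bounds the real parts of the eigenvalues of $A+bK$ from one side only; their moduli are still unbounded, hence so are the $|L_i|$. Already taking all eigenvalues equal to $-M$ with $M \gg c$ gives $|\tilde L_i| \sim (M/c)^i$. The expansion $f(L) = c^N \Phi(\tilde L) + O(c^{N-1})$ is therefore not uniform over the admissible $K$, and there is no fixed threshold $c$ beyond which the leading term dominates for all such $K$.

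Second, even granting a leading-term reduction, verifying $\Phi(\tilde L) \ne 0$ at the single confluent point $\tilde L_i = (-1)^i \binom{d}{i}$ only shows that $\Phi$ is not the zero polynomial. Since the theorem asserts the conclusion for \emph{every} $K$ with spectrum beyond $\pm c$, you would need $\Phi \ne 0$ on the entire admissible locus; one test case cannot deliver this.

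The paper bypasses both difficulties. In place of a compactness/scaling argument, Proposition~\ref{25} proves the \emph{ratio} estimate $|k_{d-m}| \geq c_0\,|k_{d-m+1}|\,(d-m)/(m+1)$ (with $c_0 = \min|\Re\lambda|$ and $k_{d+1}:=1$), which after absorbing $v$ for $c$ large gives $K = k_1\bigl(e_1^T + O(1/c)\bigr)$ upon dividing by the dominant entry $k_1$. Independently, Proposition~\ref{acc} supplies an explicit sufficient condition for $K \in \LARC(J_d + e_d v, e_d)$: nonvanishing of $r_j = K(J_d+e_dv)^j e_d$ and linear independence of $K_j = K(J_d+e_dv)^j$ for $j=0,\dots,d-1$. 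These conditions are then checked directly from the ratio estimate, with no test-case or compactness step, and both sign cases are treated simultaneously. Your final deduction of the density of $\PLARC(A,b)$ via Lemma~\ref{proj} and Proposition~\ref{ptrivial} is correct.
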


\begin{remark}
Theorem~\ref{th-rc} does not generalize to the multi-input case under the sole
condition that $(A,B)$ is controllable. Think for instance of the system
$(A,B)=(0_{d},\mathrm{Id}_{d})$. Then clearly $\mathrm{RC}(A,B)=+\infty$ and
$\mathrm{PLARC}(A,B)=\emptyset$.
\end{remark}

The proof of Theorem~\ref{th-rc} is based on the following two technical results.

\begin{proposition}
\label{acc}
Let $(A,b)\in P_{d,1}$
be a controllable pair and $v(A,b)$ and $P(A,b)$ be defined as above. Fix
$K\in M_{1, d}(\mathbb{R})$. For $j\geq0$, set $K_{j}=K(J_{d}+e_{d}
v(A,b))^{j} $, and $r_{j}=K (J_{d}+e_{d} v(A,b))^{j} e_{d}$. Then $KP(A,b)
\in\mathrm{LARC}(A-\mathrm{Tr}(A)\mathrm{Id},b)$ if $r_{j}\ne0$ for every
$j=0,\dots,d-1$ and $K_{0},\dots,K_{d-1}$ are linearly independent. In
particular,
$\mathrm{LARC}(J_{d},e_{d})$ contains all line vectors with coefficients in
$\mathbb{R}\setminus\{0\}$.
\end{proposition}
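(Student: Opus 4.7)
My plan is to reduce the claim to a Lie-algebraic calculation in the companion frame, and then to exhibit explicit iterated brackets that generate $M_d(\mathbb{R})$. Conjugating with $P(A,b)$, the condition $KP(A,b)\in\LARC(A-\Tr(A)\Id,b)$ is equivalent to $\Lie(M,N)=M_d(\mathbb{R})$, where $M:=J_d+e_d v(A,b)$ and $N:=e_d K$. In this frame the column vectors $f_j:=M^j e_d$ form a basis of $\mathbb{R}^d$ (since $(M,e_d)$ is in controllability form), the row vectors $K_j:=KM^j$ form a basis of $M_{1,d}(\mathbb{R})$ by the hypothesis, and the identity $K_j f_i=KM^{i+j}e_d=r_{i+j}$ shows that the ``antidiagonal'' index $i+j$ governs the computation. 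The rank-one matrices $\{f_i K_j : 0\le i,j\le d-1\}$ then form a basis of $M_d(\mathbb{R})$.

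I would then prove by induction on $k$ that $V_k:=\mathrm{span}\{f_i K_j : i+j\le k\}\subseteq W:=\Lie(M,N)$; at $k=2(d-1)$ this gives the conclusion. The base case $V_0=\mathbb{R}\cdot N$ is immediate, and the whole inductive step rests on the two formulas
\[
\mathrm{ad}_M(f_i K_j)=f_{i+1}K_j-f_i K_{j+1},\qquad \mathrm{ad}_N(f_i K_j)=r_i f_0 K_j - r_j f_i K_0.
\]
In the range $k+1\le d-1$, applying $\mathrm{ad}_N$ to $\mathrm{ad}_M(f_k K_0)=f_{k+1}K_0-f_k K_1$ produces a matrix whose only summand outside $V_k$ is $-r_0 f_{k+1}K_0$; since $r_0\ne 0$ and $V_k\subseteq W$, this forces $f_{k+1}K_0\in W$. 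In the range $k+1\ge d$, the element $f_{k+1}K_0$ is already in $V_{d-1}\subseteq V_k$ by Cayley--Hamilton, and I would instead apply $\mathrm{ad}_M$ to $f_{d-1}K_{k-d+1}\in V_k$, with the same Cayley--Hamilton reduction killing one summand and leaving a genuinely new antidiagonal element in $W$. In either case, once a single element of the antidiagonal $i+j=k+1$ is secured, the identity $\mathrm{ad}_M(f_{k-j}K_j)=f_{k+1-j}K_j-f_{k-j}K_{j+1}$, applied for $j=0,1,\dots$ in succession, slides membership along the entire antidiagonal and yields $V_{k+1}\subseteq W$.

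The ``in particular'' statement then follows by specialization: when $(A,b)=(J_d,e_d)$ one has $v=0$ and $P(A,b)=\mathrm{Id}$, so $r_j$ is (up to indexing) the $(d-j)$-th entry of $K$, and the shifts $K_j=KJ_d^j$ form a triangular system with constant diagonal, linearly independent as soon as no entry of $K$ vanishes; both hypotheses hold. The main obstacle I anticipate is pinpointing the bracket that supplies a transversal direction on each new antidiagonal: the natural candidates $\mathrm{ad}_M^{k+1}(N)$ and $[\mathrm{ad}_M^a(N),\mathrm{ad}_M^b(N)]$ both land, modulo $V_k$, inside the codimension-one ``sum-zero'' hyperplane already spanned by $\mathrm{ad}_M(V_k)$, so one genuinely has to compose with $\mathrm{ad}_N$ (or rely on Cayley--Hamilton when $k+1\ge d$), and this is precisely where the assumption $r_0\ne 0$ becomes indispensable.
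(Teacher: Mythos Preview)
Your argument is essentially correct and follows a different, more economical path than the paper's. One small slip: at $k=0$ your claim that ``the only summand outside $V_k$ is $-r_0 f_{k+1}K_0$'' fails, since
\[
\mathrm{ad}_N\bigl(f_1K_0-f_0K_1\bigr)=2r_1 f_0K_0 - r_0 f_1K_0 - r_0 f_0K_1
\]
has \emph{two} terms, $f_1K_0$ and $f_0K_1$, on the antidiagonal $i+j=1$. The fix is immediate---combine with $\mathrm{ad}_M(N)=f_1K_0-f_0K_1\in W$ to extract both---and for $k\ge 1$ your claim is literally true because then $f_0K_1\in V_1\subseteq V_k$.

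The paper proceeds differently. Instead of $\mathrm{ad}_N$, it uses the double bracket $[f_jK_l,[M,f_jK_l]]$, which (together with $[M,f_jK_l]$) separates $f_{j-1}K_l$ and $f_jK_{l+1}$ provided $r_{d+l-j}\ne 0$; iterating from $e_dK=f_dK_0$ this consumes each of $r_0,\dots,r_{d-1}$ in turn to fill a triangular half of the $f_jK_l$-grid, and a second induction via brackets $[f_jK_l,f_{j+1}K_{l+1}]$ of two rank-one matrices handles the remaining antidiagonals. Your route, by contrast, uses only $\mathrm{ad}_M$ and $\mathrm{ad}_N$, and the sole non-vanishing hypothesis you actually invoke is $r_0\ne 0$; the rest is $\mathrm{ad}_M$-sliding plus Cayley--Hamilton. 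In effect you prove the stronger sufficient condition that $KP(A,b)\in\LARC(A-\Tr(A)\Id,b)$ whenever $r_0\ne 0$ and $K_0,\dots,K_{d-1}$ are independent (for $(J_d,e_d)$ this reads $k_d\ne 0$ and $k_1\ne 0$, rather than all entries nonzero). The paper's bracket calculus is more symmetric in the $r_j$'s but pays for this with the full hypothesis $r_0,\dots,r_{d-1}\ne 0$.
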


\begin{proof}
In this proof we abbreviate $v:=v(A,b)$ and $P:=(A,b)$. Since
\begin{equation}
\mathrm{LARC}(J_{d}+e_{d}v,e_{d})=\mathrm{LARC}(P(A-\mathrm{Tr}(A)\mathrm{Id}%
)P^{-1},Pb)=\mathrm{LARC}(A-\mathrm{Tr}(A)\mathrm{Id},b)P^{-1} \label{larcs}%
\end{equation}
we can assume without loss of generality that $(A,b)$ is in controllable form
and that $\mathrm{Tr}(A)=0$, i.e., $A=J_{d}+e_{d}v$, $b=e_{d}$, $v\,e_{d}=0$.

For $j\geq0$, define
\[
f_{d-j}=(J_{d}+e_{d} v)^{j} e_{d}.
\]

We next prove that the rank-1 matrices $f_{j}K_{l}$, $j=1,\dots,d$,
$l=0,\dots,d-1$, belong to
\[
\mathcal{L}:=\mathrm{Lie}(J_{d}+e_{d}v,e_{d}K).
\]

Notice that for $j,l\geq0$
\[
K_{l} f_{d-j}=r_{l+j}.
\]

Straightforward computations yield for $j\leq d$ and $l\geq0$
\begin{align*}
[J_{d}+e_{d} v,f_{j} K_{l}]  &  =f_{j} K_{l+1} -f_{j-1}K_{l},\\
[f_{j} K_{l},[J_{d}+e_{d} v,f_{j} K_{l}]]  &  =-2 r_{d+l+1-j} f_{j}
K_{l}-r_{d+l-j}(f_{j-1} K_{l}+f_{j} K_{l+1}).
\end{align*}

Hence, if $f_{j} K_{l}$ is in $\mathcal{L}$ and $r_{d+l-j}$ is different from
zero, then $f_{j} K_{l+1}$ and $f_{j-1}K_{l}$ also belong to $\mathcal{L}$.

By a trivial induction one deduces that $f_{j} K_{l}\in\mathcal{L}$ for $j\leq
d$, $l\geq0$ and $1\leq j-l\leq d$, since $e_{d} K=f_{d} K_{0}\in\mathcal{L}$
and $r_{0},\dots,r_{d-1}$ are not zero.

We prove by induction on $m=l-j$ that the following property holds true:

($\mathcal{P}_{m}$) For every $j\leq d$ and $l\geq0$ such that $j-l\geq m$ we
have $f_{j}K_{l}\in\mathcal{L}$.

We proved $\mathcal{P}_{m}$ for $m$ up to $-1$. Assume that $P_{m}$ holds true
for $m\geq-1$. Notice that for $j\leq d-1$ and $l\leq d-2$ one has
\[
\lbrack f_{j}K_{l},f_{j+1}K_{l+1}]=r_{d-1-(j-l)}f_{j}K_{l+1}-r_{d+1-(j-l)}%
f_{j+1}K_{l}%
\]
and $r_{d-1-(j-l)}\neq0$. If $l-j=m$ then $f_{j+1}K_{l}\in\mathcal{L}$ and we
conclude that also $f_{j}K_{l+1}$ is in $\mathcal{L}$, i.e., $\mathcal{P}%
_{m+1}$ holds true.

We have proved, as claimed, that $f_{j} K_{l}\in\mathcal{L}$ for $j=1,\dots,d$
and $l=0,\dots,d-1$. Since $f_{1},\dots, f_{d}$ and $K_{0},\dots,K_{d-1}$ are
linearly independent, respectively by construction and by hypothesis, then it
follows that $\mathcal{L}=M_{d}(\mathbb{R})$, concluding the proof of the
proposition.
\end{proof}

\begin{proposition}
\label{25}
If all the real parts of the eigenvalues of $J_{d}+e_{d}K$ are nonzero and
have the same sign, then
$|k_{d-m}|\geq c_{0}|k_{d-m+1}|(d-m)/(m+1)$ for every $m=0,\dots,d-1$, with
$k_{d+1}:=1$ and $c_{0}:=\min_{\lambda\in\sigma(J_{d}+e_{d}K)}|\Re(\lambda)|$.
\end{proposition}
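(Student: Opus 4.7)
The plan is to translate the inequality into an equivalent statement about the elementary symmetric polynomials of the eigenvalues and prove it by induction on $d$ using the Gauss--Lucas theorem.

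I first observe that $J_d+e_dK$ is the companion matrix of $p(\lambda)=\lambda^d-k_d\lambda^{d-1}-\cdots-k_1$. Writing $p(\lambda)=\prod_{i=1}^d(\lambda-\lambda_i)$ and applying Vieta's formulas gives $k_{d-m+1}=(-1)^{m+1}e_m(\lambda_1,\dots,\lambda_d)$ for $m=1,\dots,d$, where $e_m$ denotes the $m$-th elementary symmetric polynomial; with the convention $k_{d+1}=1=e_0$ one has $|k_{d-m+1}|=|e_m|$ for every $m$. The inequality to prove thus becomes
\[
|e_{m+1}|\geq c_0\,\frac{d-m}{m+1}\,|e_m|,\qquad m=0,1,\dots,d-1.
\]
Since $e_m(-\lambda_1,\dots,-\lambda_d)=(-1)^me_m(\lambda_1,\dots,\lambda_d)$ and $c_0$ is invariant under $\lambda_i\mapsto-\lambda_i$, one may assume without loss of generality that $\Re(\lambda_i)\geq c_0>0$ for all $i$.

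I next argue by induction on $d$. The base case $d=1$ is immediate as $|e_1|=|\lambda_1|\geq|\Re(\lambda_1)|\geq c_0$. For the inductive step, let $\mu_1,\dots,\mu_{d-1}$ be the roots of $p'$. By the Gauss--Lucas theorem these lie in the convex hull of $\{\lambda_1,\dots,\lambda_d\}$, hence $\Re(\mu_j)\geq c_0$ for every $j$. A direct comparison of coefficients in $p'(\lambda)/d=\prod_{j=1}^{d-1}(\lambda-\mu_j)$ yields
\[
e_m(\mu_1,\dots,\mu_{d-1})=\frac{d-m}{d}\,e_m(\lambda_1,\dots,\lambda_d),\qquad m=0,\dots,d-1.
\]
For $m\in\{0,\dots,d-2\}$, applying the induction hypothesis to the $(d-1)$-tuple $(\mu_j)$ (whose own minimum real-part modulus is at least $c_0$) gives $|e_{m+1}(\mu)|\geq c_0\,\frac{d-1-m}{m+1}\,|e_m(\mu)|$; substituting the displayed identity and clearing the common positive factor $(d-m-1)/d$ produces exactly the target bound $|e_{m+1}(\lambda)|\geq c_0\,\frac{d-m}{m+1}\,|e_m(\lambda)|$.

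The only remaining case is $m=d-1$, which the derivative argument does not reach (there is no $e_d$ in a $(d-1)$-variable system). I treat it directly using the identity $e_{d-1}/e_d=\sum_{i=1}^d 1/\lambda_i$: combined with $|1/\lambda_i|\leq 1/|\Re(\lambda_i)|\leq 1/c_0$ and the triangle inequality, this gives $|e_{d-1}/e_d|\leq d/c_0$, equivalently $|e_d|\geq (c_0/d)|e_{d-1}|$. I expect the main technical point to be the coefficient-matching identity $e_m(\mu)=\frac{d-m}{d}e_m(\lambda)$, but this is a routine expansion of $p$ and $p'$.
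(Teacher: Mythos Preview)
Your proof is correct and takes a genuinely different route from the paper. The paper works directly with the Newton-type identity $(m+1)e_{m+1}=\sum_{|I|=m}\lambda_I\sum_{j\notin I}\lambda_j$ and bounds by passing to real parts, asserting along the way that $|e_m|=\sum_{|I|=m}\bigl|\Re\bigl(\prod_{i\in I}\lambda_i\bigr)\bigr|$; this termwise sign equality is delicate and in fact fails in general (for the conjugate pairs $1\pm 2i$, $1\pm 3i$ one gets $\Re\bigl((1+2i)(1+3i)\bigr)=-5$ while $e_2=19$), so the paper's argument leans on a step that is not valid as stated. Your approach bypasses any such analysis: after rewriting the claim as $|e_{m+1}|\ge c_0\frac{d-m}{m+1}|e_m|$, you induct on $d$, use the Gauss--Lucas theorem to guarantee that the roots $\mu_j$ of $p'$ still satisfy $\Re(\mu_j)\ge c_0$ (and, since $p'$ has real coefficients, the induction hypothesis applies to them), and transfer the inductive bound via the exact coefficient identity $e_m(\mu)=\frac{d-m}{d}e_m(\lambda)$; the endpoint $m=d-1$ is dispatched by $|e_{d-1}/e_d|=\bigl|\sum_i 1/\lambda_i\bigr|\le d/c_0$. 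This is arguably more robust: Gauss--Lucas controls the root geometry globally, so no term-by-term sign control is needed, and your proof is fully self-contained.
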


\begin{proof}
Denote by $\lambda_{1},\dots,\lambda_{d}$ the eigenvalues of $J_{d}+e_{d}K$.
Notice that $|k_{d}|=|\lambda_{1}+\cdots+\lambda_{d}|=|\Re(\lambda
_{1})|+\cdots+|\Re(\lambda_{d})|\geq d\,c_{0}$.

For $m=1,\dots,d$,
\[
|k_{d-m+1}|=\left|  \sum_{j_{1}<\cdots<j_{m}} \lambda_{j_{1}}\cdots
\lambda_{j_{m}}\right|  =\sum_{j_{1}<\cdots<j_{m}} |\Re(\lambda_{j_{1}}%
\cdots\lambda_{j_{m}})|.
\]

Since
\[
\sum_{j_{1}<\cdots<j_{m}}\left(  \lambda_{j_{1}}\cdots\lambda_{j_{m}}%
\sum_{j\neq j_{1},\dots,j_{m}}\lambda_{j}\right)  =(m+1)\sum_{j_{1}%
<\cdots<j_{m+1}}\lambda_{j_{1}}\cdots\lambda_{j_{m+1}},
\]
we have
\begin{align*}
(m+1)|k_{d-m}|  &  =\left\vert \sum_{j_{1}<\cdots<j_{m}}\left(  \lambda
_{j_{1}}\cdots\lambda_{j_{m}}\sum_{j\neq j_{1},\dots,j_{m}}\lambda_{j}\right)
\right\vert \\
&  \geq\sum_{j_{1}<\cdots<j_{m}}\sum_{j\neq j_{1},\dots,j_{m}}|\Re
(\lambda_{j_{1}}\cdots\lambda_{j_{m}})||\Re(\lambda_{j})|\\
&  \geq\sum_{j_{1}<\cdots<j_{m}}(n-m)|\Re(\lambda_{j_{1}}\cdots\lambda_{j_{m}%
})|c_{0}=(d-m)c_{0}|k_{d-m+1}|.
\end{align*}
This concludes the proof.
\end{proof}

We can now prove Theorem~\ref{th-rc}.

\medskip

\noindent\emph{Proof of Theorem~\ref{th-rc}.} According to \eqref{larcs}, one
must prove the assertion for $(A,b)=(J_{d}+e_{d}v,e_{d})$ with $ve_{d}=0$.

Let then $K\in M_{1,d}(\mathbb{R})$ be such that the eigenvalues of
$J_{d}+e_{d}(v+K)$ have either all real part smaller than $-c$ or all real
part larger than $c$
for some $c>0$ to be chosen later. Applying Proposition~\ref{25} one gets that
for every $m=1,\dots,d-1$,
\[
|k_{d-m}+v_{d-m}|\geq c|k_{d-m+1}+v_{d-m+1}|(d-m)/(m+1),
\]
and $|k_{d}|\geq cd$, where $v=(v_{1},\dots,v_{d-1},0)$. By taking $c$ large
enough with respect to $|v|$ we have
\begin{equation}
|k_{d-m}|\geq\frac{c}{2d}|k_{d-m+1}|,\qquad m=1,\dots,d-1. \label{C2}%
\end{equation}
In particular, $k_{m}\neq0$ for $m=1,\dots,d$. According to
Proposition~\ref{acc} the proof is completed if we show that $r_{m}%
=K(J_{d}+e_{d} v)^{m} e_{d}\neq0$, $m=0,\dots,d-1$, and $K_{0},\dots,K_{d-1}$
are linearly independent.

By definition of $r_{m}$ a trivial induction yields
\[
r_{m}=k_{d-m}+\sum_{j<d-m}\alpha_{j}^{m}k_{j}%
\]
for some constants $\alpha_{j}^{m}$ independent of $K$. Fix $m\in
\{0,\dots,d-m\}$. If all the corresponding $\alpha_{j}^{m}$ are equal to zero
then we are done, otherwise let $\bar{\jmath}$ be the smallest index $j$ so
that $\alpha_{j}^{m}\neq0$. Then
\[
r_{m}=\alpha_{\bar{\jmath}}^{m}k_{\bar{\jmath}}(1+\xi)
\]
with $|\xi|=O(1/c)$ according to \eqref{C2}. The conclusion follows from $c$
large enough.

Similarly, one has that $K=k_{1}(e_{1}^{T}+\Xi)$ with $|\Xi|=O(1/c)$. One
immediately deduces that the for $m=0,\dots,d-1$, $K_{m}=k_{1}(e_{m+1}%
^{T}+O(1/c))$. Hence, $K_{0},\dots,K_{d-1}$ are linearly independent for $c$
large enough.

The last part of the statement follows from Lemma~\ref{proj} and
Proposition~\ref{ptrivial}. \hfill$\blacksquare$

\subsection{Maximal growth rates in the single-input case}

All controllable single-input systems have the same controllability form.
Hence $\mathrm{RC}(A,b)=\mathrm{RC}(A,b^{\prime})$ for $(A,b)$ and
$(A,b^{\prime})$ controllable (see Remark \ref{r-con}).
In particular we can define $\mathrm{RC}(A)$ as the value $\mathrm{RC}(A,b)$
corresponding to the case where $(A,b)$ is controllable,
and similarly for $\mathrm{RC}_{\#}(A)$, $\mathrm{RD}_{\#}(A)$, $\mathrm{RD}%
(A)$.

\begin{theorem}
Let $A$ be such that there exists $b$ with $(A,b)$ controllable. Then
\begin{equation}
\label{primaparte}\mathrm{RC}(A)=\mathrm{RD}(-A),
\end{equation}
and in particular $\mathrm{RC}(A)=+\infty$ if and only if $\mathrm{RD}%
(-A)=+\infty$. Moreover, there exists $c>0$ depending on $A$ but not on
$T,\mu$ such that if $\mathrm{RC}(A)>c$ or $\mathrm{RD}(A)>c$ then
$\mathrm{RC}(A)=\mathrm{RC}_{\#}(A)=\mathrm{RD}_{\#}(-A)=\mathrm{RD}(-A). $
\end{theorem}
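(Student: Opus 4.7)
The plan is to deduce both assertions from Theorem~\ref{corri} (which ensures that in the single-input setting any controllable pair $(A,b)$ admits a dense set of feedbacks landing in $\mathrm{LARC}(A-\mathrm{Tr}(A)\mathrm{Id},b)\subseteq \mathrm{PLARC}(A,b)$) combined with the tools developed in Sections~\ref{Section3}--\ref{s-5}. For the first identity, I would fix $b$ with $(A,b)$ controllable. Theorem~\ref{corri} gives $\mathrm{PLARC}(A,b)\ne\emptyset$, hence $\mathrm{LARC}_0(A,b)\ne\emptyset$ by Proposition~\ref{ptrivial}, so Theorem~\ref{5p1} applies and yields $\mathrm{RC}(A,b)=\mathrm{RD}(-A,-b)$. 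By Remark~\ref{2.6} this reads $\mathrm{RC}(A)=\mathrm{RD}(-A)$.

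For the second part, I would take $c>0$ to be the maximum of the constants provided by Theorem~\ref{corri} for $(A,b)$ and $(-A,-b)$; this $c$ depends only on $A$, not on $T,\mu$. I would treat the case $\mathrm{RC}(A)>c$ in detail, the case $\mathrm{RD}(A)>c$ being obtained verbatim after interchanging $(A,b)$ with $(-A,-b)$. The crucial observation is that the eigenvalue bound \eqref{td-vp} also governs $\mathrm{rc}_\#$: for $\alpha\equiv\bar\alpha\in[\mu/T,1]$ and $x_0$ in the real invariant subspace of $A+\bar\alpha bK$ associated with an eigenvalue $\lambda$, the pair $(\bar\alpha,x_0)$ is $\#$-admissible (the projected trajectory is a fixed point when $\lambda\in\mathbb{R}$ and periodic otherwise) with $\lambda^+(x_0,A,b,K,\bar\alpha)=\Re(\lambda)$, so
\[
\mathrm{rc}_\#(A,b,K)\leq \min_{\bar\alpha\in[\mu/T,1]}\min\bigl(-\Re(\sigma(A+\bar\alpha bK))\bigr).
\]
Consequently any $K$ with $\mathrm{rc}(A,b,K)>c$ or $\mathrm{rc}_\#(A,b,K)>c$ makes every eigenvalue of $A+bK$ have real part below $-c$, and Theorem~\ref{corri} combined with Lemma~\ref{proj} places such a $K$ in $\mathrm{PLARC}(A,b)$.

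For any such $K$, Corollary~\ref{cooro} combined with Propositions~\ref{29} and~\ref{time-reversal} yields the pointwise chain
\[
\mathrm{rc}(A,b,K)=\mathrm{rc}_\#(A,b,K)=\mathrm{rd}_\#(-A,-b,K)=\mathrm{rd}(-A,-b,K).
\]
Passing to suprema over $K\in M_{1,d}(\mathbb{R})$ then delivers $\mathrm{RC}(A)=\mathrm{RC}_\#(A)=\mathrm{RD}_\#(-A)=\mathrm{RD}(-A)$, since feedbacks with any one of these rates $\leq c<\mathrm{RC}(A)$ contribute only negligibly to the corresponding supremum, whereas on the complementary set the four rates coincide pointwise. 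The main obstacle is precisely this transfer from pointwise identities (valid only on $\mathrm{PLARC}(A,b)$) to suprema taken over all of $M_{1,d}(\mathbb{R})$; the eigenvalue bound on $\mathrm{rc}_\#$ sketched above is what closes the gap by guaranteeing that no feedback producing a large $\#$-admissible growth rate can escape the $\mathrm{PLARC}$ regime.
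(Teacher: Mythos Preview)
Your proof is correct and follows essentially the same route as the paper's: invoke Theorem~\ref{corri} to get density of $\mathrm{PLARC}(A,b)$, apply Theorem~\ref{5p1} for \eqref{primaparte}, and for the second part use the eigenvalue bound \eqref{td-vp} together with Theorem~\ref{th-rc} to force feedbacks with large rates into $\mathrm{PLARC}(A,b)$, where the chain \eqref{4eq} applies.

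The one noteworthy difference is that you explicitly establish the analogue of \eqref{td-vp} for $\mathrm{rc}_\#$ (via constant signals and eigenvectors/invariant planes giving $\#$-admissible pairs), whereas the paper simply writes ``By equation~\eqref{4eq} one deduces that $\mathrm{RC}(A),\mathrm{RC}_{\#}(A),\mathrm{RD}_{\#}(-A),\mathrm{RD}(-A)$ are all equal''. Your extra step is what rigorously justifies the passage from pointwise equalities on $\Xi_c=\{K:\mathrm{rc}(A,b,K)>c\}$ to equalities of the full suprema: without it one only gets $\mathrm{RC}(A)=\sup_{\Xi_c}\mathrm{rc}_\#\leq\mathrm{RC}_\#(A)$, and since the trivial inequalities $\mathrm{rc}\leq\mathrm{rc}_\#$, $\mathrm{rd}\leq\mathrm{rd}_\#$ go the wrong way, something is needed to rule out large $\mathrm{rc}_\#$ outside $\Xi_c$. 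Your observation that $\mathrm{rc}_\#(A,b,K)>c$ also forces the eigenvalue condition (hence $K\in\mathrm{PLARC}(A,b)$, hence $\mathrm{rc}_\#=\mathrm{rc}$) is exactly what is needed and is a genuine clarification of the paper's terse argument.
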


\begin{proof}
According to Theorem~\ref{corri}, $\mathrm{PLARC}(A,b)$ is dense.
Theorem~\ref{5p1} then implies \eqref{primaparte}.

Let now $c>0$ be as in the statement of Theorem~\ref{th-rc}. Assume that
$\mathrm{RC}(A)>c$ (the case $\mathrm{RD}(A)>c$ being entirely similar). Hence
the set $\Xi_{c}=\{K\in M_{1,d}(\mathbb{R})\mid\mathrm{rc}(A,B,K)>c\}$ is
nonempty. By taking $\bar{\alpha}=1$ in \eqref{td-vp}, condition
$\mathrm{rc}(A,b,K)>c$ implies that the eigenvalues of $A+bK$ have real part
smaller than $-c$. Hence, Theorem~\ref{th-rc} implies that $\Xi_{c}%
\subset\mathrm{PLARC}(A,b)$. By equation~\eqref{4eq} one deduces that
$\mathrm{RC}(A),\mathrm{RC}_{\#}(A),\mathrm{RD}_{\#}(-A),\mathrm{RD}(-A)$ are
all equal.
\end{proof}

Combining the above result with equation \eqref{rem2} one gets the following corollary.

\begin{cor}
\label{cor-symm} Let $A$ be such that there exists $b$ with $(A,b)$
controllable and $-A+\frac{2}{d}\mathrm{Tr}(A)\mathrm{Id}_{d}$ is similar to
$A$. Then
\[
\mathrm{RC}(A)=\mathrm{RD}(A)-\frac{2}{d}\mathrm{Tr}(A),
\]
and in particular $\mathrm{RC}(A)=+\infty$ if and only if $\mathrm{RD}%
(A)=+\infty$.
\end{cor}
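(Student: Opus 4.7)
The plan is to combine the previous theorem, which already establishes $\mathrm{RC}(A)=\mathrm{RD}(-A)$, with a simple similarity-plus-shift computation that rewrites $\mathrm{RD}(-A)$ in terms of $\mathrm{RD}(A)$. Essentially all the work is already done; the hypothesis that $-A+\tfrac{2}{d}\mathrm{Tr}(A)\mathrm{Id}_d$ is similar to $A$ is tailored precisely to let these two transformations close up on each other.

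Set $\lambda:=\tfrac{2}{d}\mathrm{Tr}(A)$ for brevity. First I would apply identity \eqref{rem2} to the pair $(-A,b')$ (for any $b'$ making $(-A,b')$ controllable, which exists since $(A,b)$ is controllable), obtaining
\[
\mathrm{RD}(-A+\lambda \mathrm{Id}_d)=\mathrm{RD}(-A)+\lambda.
\]
Next, the hypothesis provides an invertible matrix $P$ with $P(-A+\lambda \mathrm{Id}_d)P^{-1}=A$. Choose $b$ so that $(A,b)$ is controllable; then $(-A+\lambda\mathrm{Id}_d,P^{-1}b)$ is controllable as well, so the single-input convention at the start of Section~\ref{Section5} (under which $\mathrm{RD}(A)$ depends only on $A$) together with the similarity invariance \eqref{chch} gives
\[
\mathrm{RD}(-A+\lambda \mathrm{Id}_d)=\mathrm{RD}(A).
\]
Combining these two equalities yields $\mathrm{RD}(-A)=\mathrm{RD}(A)-\lambda$.

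Finally I would invoke the preceding theorem, which asserts $\mathrm{RC}(A)=\mathrm{RD}(-A)$, to conclude
\[
\mathrm{RC}(A)=\mathrm{RD}(A)-\frac{2}{d}\mathrm{Tr}(A),
\]
as desired. The equivalence $\mathrm{RC}(A)=+\infty\iff \mathrm{RD}(A)=+\infty$ is then immediate since $\lambda$ is a finite real number, so one side is infinite precisely when the other is. I do not expect any genuine obstacle: the only subtlety is to check that the single-input convention $\mathrm{RD}(A):=\mathrm{RD}(A,b)$ is compatible with the similarity transformation used (which it is, since controllability is preserved under the change of basis $P$), and this follows directly from Remark~\ref{2.6} and \eqref{chch}.
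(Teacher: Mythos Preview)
Your argument is correct and matches the paper's approach: the paper simply states that the corollary follows by combining the preceding theorem ($\mathrm{RC}(A)=\mathrm{RD}(-A)$) with equation~\eqref{rem2}, and you have spelled out precisely that computation, including the similarity step via~\eqref{chch} and Remark~\ref{2.6}.
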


\begin{remark}
When restricted to the case $d=2$, Corollary~\ref{cor-symm} implies (and
actually improves) Proposition~\ref{rdc0}, established in
\cite{ChitourSigalotti2010}, because every traceless 2$\times2$ matrix $A$ is
similar to its opposite $-A$.
\end{remark}

\begin{remark}
{
Notice that a  matrix $A$ diagonalizable over $\C$ is similar to $-A$ if and only if
$\mathrm{Tr}(A^{k})=0$ for every odd integer $k$. In particular, every 
skew-symmetric matrix $A$ for which $(A,b)$ is controllable for some $b$ verifies
the hypotheses of Corollary~\ref{cor-symm} and we conclude that 
$\mathrm{RC}(A)=\mathrm{RD}(A)$ for such matrices.
The same is true for $J_d$, since every nilpotent matrix is similar to its opposite, and 
 even 
more can be established, as stated below.
}
\end{remark}

\begin{proposition}
Let $\mathcal{T}$ be equal to the diagonal matrix $\mathrm{diag}%
(1,-1,\dots,(-1)^{d+1})$. For every $K\in M_{1, d}(\mathbb{R})$ set
$K_{-}=(-1)^{d} K \mathcal{T} . $ Then $\mathrm{rc}(J_{d},e_{d},K)=\mathrm{rd}%
(J_{d}, e_{d}, K_{-})$ for every $K\in M_{1, d}(\mathbb{R})$ with components
in $\mathbb{R}\setminus\{0\}$.
\end{proposition}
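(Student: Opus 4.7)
The plan is to combine the time-reversal identity from Corollary~\ref{cooro} with the coordinate-change invariance \eqref{chco}, using the diagonal conjugation $\mathcal{T}$ to convert $(-J_d,-e_d)$ into $(J_d,e_d)$.

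First, I would verify that any $K\in M_{1,d}(\R)$ with no zero components lies in $\mathrm{PLARC}(J_d,e_d)$. Proposition~\ref{acc} (last sentence) states that such a $K$ belongs to $\mathrm{LARC}(J_d,e_d)$, and by the chain of inclusions in Lemma~\ref{proj} this is contained in $\mathrm{PLARC}(J_d,e_d)$. Hence Corollary~\ref{cooro} applies and yields
\[
\mathrm{rc}(J_d,e_d,K)=\mathrm{rd}(-J_d,-e_d,K).
\]

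Next, I would show that $\mathcal{T}$ realizes a similarity between $(-J_d,-e_d,K)$ and $(J_d,e_d,K_-)$. A direct entry-wise computation, using the fact that the nonzero entries of $J_d$ live on positions whose row and column indices have opposite parities, gives $\mathcal{T}J_d\mathcal{T}^{-1}=-J_d$; moreover $\mathcal{T}e_d=(-1)^{d+1}e_d$. Setting $P:=\mathcal{T}$ and the scalar $V:=(-1)^d$, one checks
\[
P(-J_d)P^{-1}=J_d,\qquad P(-e_d)V^{-1}=e_d,\qquad VKP^{-1}=(-1)^d K\mathcal{T}=K_-.
\]
The invariance relation \eqref{chco} then delivers
\[
\mathrm{rd}(-J_d,-e_d,K)=\mathrm{rd}(J_d,e_d,K_-),
\]
and combining with the previous display concludes the proof.

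The only substantive point is the identification of the correct coordinate change; verifying $\mathcal{T}J_d\mathcal{T}^{-1}=-J_d$ is a one-line parity check, and the application of Corollary~\ref{cooro} is automatic once the $\mathrm{PLARC}$ membership is in place via Proposition~\ref{acc}. There is no genuine obstacle: all the heavy lifting has been done in the earlier sections, and the argument is essentially bookkeeping of signs.
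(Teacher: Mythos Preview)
Your proposal is correct and uses exactly the same ingredients as the paper: Proposition~\ref{acc} for $\mathrm{PLARC}$ membership, Corollary~\ref{cooro} for time reversal, and the coordinate-change invariance via the conjugation by $\mathcal{T}$. The only cosmetic difference is the order: the paper first applies the change of variables to $\mathrm{rc}$ (via \eqref{chco1}) and then invokes Corollary~\ref{cooro} on the pair $(-J_d,(-1)^{d+1}e_d,(-1)^dK_-)$, whereas you apply Corollary~\ref{cooro} first and then the change of variables on $\mathrm{rd}$ (via \eqref{chco}); both routes are equivalent.
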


\begin{proof}
Notice that
\begin{align*}
\mathcal{T}^{-1}  &  =\mathcal{T},\quad J_{d}=-\mathcal{T} J_{d} \mathcal{T},
\quad\mathcal{T} e_{d}=(-1)^{d+1} e_{d}.
\end{align*}

According to \eqref{chco1}, with $P=\mathcal{T}$, one gets
\[
\mathrm{rc}(J_{d},e_{d},K)=\mathrm{rc}(\mathcal{T} J_{d} \mathcal{T}%
^{-1},\mathcal{T} e_{n},K \mathcal{T}^{-1}) =\mathrm{rc}(- J_{d},(-1)^{d+1}
e_{d},(-1)^{d} K_{-}).
\]

Proposition \ref{acc} guarantees that $K_{-}\in\mathrm{PLARC}(J_{d},e_{d})$
and then Corollary~\ref{cooro} implies that
\[
\mathrm{rc}(- J_{d},(-1)^{d+1} e_{d}, (-1)^{d} K_{-})=\mathrm{rd}%
(J_{d},(-1)^{d} e_{d}, (-1)^{d} K_{-}).
\]
Since $\mathrm{rd}(A,b,L)=\mathrm{rd}(A,\xi b,\xi^{-1}L)$ for every
controllable pair $(A,b)$ and every $\xi\ne0$, we conclude.
\end{proof}

\section{Appendix}
In this appendix, we prove the following lemma which is used in the proof of Proposition \ref{last1}.

\begin{lemma}
The eigenvalues of any element of $\mbox{spin}(9,1)$ are symmetric with respect to the origin,  i.e., if 
$\lambda$ is an eigenvalue 
then $-\lambda$ is as well with the same algebraic multiplicity as $\lambda$.
\end{lemma}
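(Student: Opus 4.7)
The plan is to produce a nondegenerate bilinear form $B$ on the representation space of the embedding $\mathrm{spin}(9,1)\hookrightarrow M_d(\R)$ which is preserved by every element of the image. Denoting by $J$ the Gram matrix of $B$, infinitesimal invariance would read $X^T J + JX = 0$ for every $X\in \mathrm{spin}(9,1)$, equivalently $JXJ^{-1} = -X^T$, so that $X$ is similar to $-X^T$.

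From this similarity the lemma is immediate: one has $\mathrm{Spec}(X) = \mathrm{Spec}(-X^T) = \mathrm{Spec}(-X) = -\mathrm{Spec}(X)$ as multisets with algebraic multiplicities, since similarity and transposition preserve the spectrum and the spectrum of $-X$ is the negation of the spectrum of $X$. Consequently, if $\lambda$ is an eigenvalue of $X$ of algebraic multiplicity $m$, then so is $-\lambda$.

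The substantive step is thus establishing the existence of such a $B$. Using the isomorphism $\mathrm{spin}(9,1)\cong \mathrm{so}(9,1)$, if the representation in use is (conjugate to) the vector representation on $\R^{10}$, then $B$ is simply the defining quadratic form of signature $(9,1)$, invariance being tautological. If instead the representation is (a piece of) the real spinor representation coming from the Clifford algebra $\mathrm{Cl}(9,1)$ --- as suggested by the appearance of $\mathrm{spin}(9,1)$ alongside $\mathrm{so}(d)$ in the Dirr--Helmke list \cite{dirr-helmke,volklein} --- then $B$ is supplied by the charge-conjugation matrix: because the signature $p - q = 8$ lies in the Majorana class modulo $8$, real spinors carry an invariant nondegenerate bilinear form, and the Lie subalgebra $\mathrm{spin}(9,1)\subset \mathrm{Cl}^{\mathrm{even}}(9,1)$ is skew-adjoint with respect to it.

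The principal obstacle I anticipate is the Clifford-algebraic bookkeeping: one must pin down the specific representation of $\mathrm{spin}(9,1)$ appearing in the classification \cite{dirr-helmke,volklein} and verify that the charge-conjugation bilinear form restricts nondegenerately to the relevant irreducible subspace. Once this is handled, the rest of the argument reduces to the one-line matrix-similarity computation above. An alternative representation-independent route, which avoids Clifford algebra entirely, is to inspect the weight system of the representation: the complexification $\mathrm{so}(10,\C)$ is of type $D_5$, and it suffices to check that the weights of the irreducible representation in question are stable under negation, whence the Jordan decomposition of $X$ yields the $\pm$-symmetric spectrum with matching multiplicities.
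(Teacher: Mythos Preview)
Your proof is correct and takes a genuinely different---and considerably shorter---route than the paper's. The paper works with the explicit realization
\[
\mathrm{spin}(9,1)=\{M\in M_{10}(\R)\mid M^T \mathrm{Id}_{(9,1)}+\mathrm{Id}_{(9,1)}M=0\},\qquad \mathrm{Id}_{(9,1)}=\mathrm{diag}(\mathrm{Id}_9,-1),
\]
i.e., what one would normally call $\mathfrak{so}(9,1)$ in its defining representation. With this definition your ``vector representation'' branch applies verbatim: the invariant bilinear form is the defining indefinite form, its Gram matrix $J=\mathrm{Id}_{(9,1)}$ is invertible, and $JMJ^{-1}=-M^T$ gives the similarity $M\sim -M$ and hence the $\pm$-symmetry of the spectrum with multiplicities in one line. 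All of your hedging about the spin representation, charge conjugation, and weight systems is therefore unnecessary here; the ``principal obstacle'' you anticipate simply does not arise once you read the paper's definition.

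By contrast, the paper's own proof does not invoke the similarity $M\sim -M^T$ at all. It instead exploits the block structure forced by the defining relation to write $M=\begin{pmatrix}A_1 & v_1\\ v_1^T & 0\end{pmatrix}$ with $A_1\in\mathfrak{so}(9)$, reduces $v_1$ to $\alpha f_1$ by an orthogonal change of basis, and computes the characteristic polynomial explicitly as $P_M(X)=XP_{A_1}(X)+\alpha^2 P_{A_2}(X)$, where $A_2\in\mathfrak{so}(8)$; since skew-symmetric matrices have characteristic polynomials even in $X$ (up to a factor of $X$ in odd dimension), $P_M(X)=Q(X^2)$ follows. Your argument is more conceptual, applies uniformly to any $\mathfrak{so}(p,q)$, and avoids the block computation entirely; the paper's argument is hands-on but yields the slightly sharper conclusion $P_M(X)=Q(X^2)$ directly.
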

\begin{proof}
Proving the result amounts showing that, for every $M\in \mbox{spin}(9,1)$,  the characteristic polynomial $P_M(X)$ of $M$ can be written as $P_M(X)=Q(X^2)$ where $Q$ is a unitary polynomial of degree five.

Recall that $\mbox{spin}(9,1)=\{M\in M_{10}(\mathbb{R}) \mid M^T\mathrm{Id}_{(9,1)}+\mathrm{Id}_{(9,1)}M=0\}$, where $\mathrm{Id}_{(9,1)}=\diag(\mathrm{Id}_9,-1)$. Rewrite $\mathrm{Id}_{(9,1)}=\mathrm{Id}_{10}-2e_{10}e_{10}^T$ with $e_{10}=(0\ \cdots\ 0\ 1)^T\in \mathbb{R}^{10}$. Let $M\in \mbox{spin}(9,1)$ and set  $v=M^Te_{10}$. Then, one immediately deduces that 
$\frac{M+M^T}2=ve_{10}^T+e_{10}v^T$ and then there exists a skew-symmetric matrix $A\in M_{10}(\mathbb{R})$ such that 
$$
M=A+ve_{10}^T+e_{10}v^T.
$$
By using the definition of $v$, one deduces from the above equality that $Ae_{10}=(v^Te_{10})e_{10}$. Since $A$ is skew-symmetric, one gets that $Ae_{10}=0$ and $v^Te_{10}=0$. Therefore, there exists a skew-symmetric matrix $A_1\in M_{9}(\mathbb{R})$ and $v_1\in \mathbb{R}^9$ such that 
$$
M=\begin{pmatrix}
A_{1} & v_1\\
v_1^T & 0%
\end{pmatrix}.
$$
Set $\alpha=\|v_1\|\geq 0$. Up to similarity with a matrix $U=\begin{pmatrix}
U_{1} & 0\\
0 & 1%
\end{pmatrix}$ so that $U_1\in SO(9)$ and $U_1^Tv_1=\alpha f_1$ with $f_1=(1\ 0\ \cdots\ 0)^T\in \mathbb{R}^{9}$, one can assume with no loss of generality that $v_1=\alpha f_1$. If $A_2$ is the $8\times 8$ skew-symmetric obtained from $A_1$ by removing the first line and the first column, one obtains after computations, that 
$$
P_M(X)=XP_{A_1}(X)+\alpha^2 P_{A_2}(X).
$$
Since $A_1$ and $A_2$ are skew-symmetric, one has that $P_{A_1}(X)$ and $P_{A_2}(X)$ are equal to $XQ_1(X^2)$ and $Q_2(X^2)$
respectively, where $Q_1$ and $Q_2$ are unitary polynomials of degree four and one concludes by setting $Q(X)=XQ_1(X)+\alpha^2Q_2(X)$.
\end{proof}

\bibliographystyle{abbrv}
\bibliography{biblio_CCS}

\end{document}